\newtheorem{thm}{Theorem}[section]
\newtheorem{prop}[thm]{Proposition}
\newtheorem{lemma}[thm]{Lemma}
\theoremstyle{definition}
\newtheorem{defin}[thm]{Definition}
\theoremstyle{remark}
\newtheorem{rem}[thm]{Remark}
\newtheorem{cor}[thm]{Corollary}
\newcommand{\iso}{\simeq}
\newcommand{\isomap}{\stackrel{\sim}{\to}}
\newcommand{\C}{\mathbb{C}}
\newcommand{\Z}{\mathbb{Z}}
\newcommand{\g}{\mathfrak{g}}
\newcommand{\bfr}{\mathfrak{b}}
\newcommand{\flag}{\mathcal{B}}
\newcommand{\gtil}{{\tilde{\mathfrak{g}}}}
\newcommand{\Acal}{\mathcal{A}}
\newcommand{\Ccal}{\mathcal{C}}
\newcommand{\Ocal}{\mathcal{O}}
\newcommand{\Ecal}{\mathcal{E}}
\newcommand{\Fcal}{\mathcal{F}}
\newcommand{\Dcal}{\mathcal{D}}
\newcommand{\Mcal}{\mathcal{M}}
\newcommand{\Ncal}{\mathcal{N}}
\newcommand{\Vcal}{\mathcal{V}}
\newcommand{\Ical}{\mathcal{I}}
\newcommand{\Kcal}{\mathcal{K}}
\newcommand{\Gcal}{\mathcal{G}}
\newcommand{\Ad}{\operatorname{Ad}}
\newcommand{\Sym}{\operatorname{Sym}}
\newcommand{\QCoh}{\operatorname{QCoh}}
\newcommand{\Coh}{\operatorname{Coh}}
\newcommand{\Hom}{\operatorname{Hom}}
\newcommand{\Ext}{\operatorname{Ext}}
\newcommand{\af}{{\operatorname{af}}}
\newcommand{\module}{\operatorname{mod}}
\newcommand{\id}{\operatorname{Id}}
\newcommand{\Dabs}{D^{\operatorname{abs}}}
\newcommand{\odd}{{\operatorname{odd}}}
\newcommand{\even}{{\operatorname{even}}}
\newcommand{\Perf}{\operatorname{Perf}}
\newcommand{\fl}{{\operatorname{fl}}}
\tikzset{degil/.style={
        decoration={markings,
            mark= at position 0.5 with {
                \node[transform shape] (tempnode) {$\backslash$};}},
        postaction={decorate}}}
\begin{document}

\title{Braid group actions on matrix factorizations}
\author{Sergey Arkhipov}
\author{Tina Kanstrup}
\address{S.A. Matematisk Institut, Aarhus Universitet, Ny Munkegade, DK-8000 , Aarhus C, Denmark, email: hippie@math.au.dk} 
\address{T.K. Centre for Quantum Geometry of Moduli Spaces, Aarhus Universitet, Ny Munkegade, DK-8000 , Aarhus C, Denmark, email: tina.kanstrup@mail.dk}

\maketitle

\begin{abstract}
Let $X$ be a smooth scheme with an action of a reductive algebraic group $G$ over an algebraically closed field $k$ of characteristic zero. We construct an action of the extended affine Braid group on the $G$-equivariant absolute derived category of matrix factorizations on the Grothendieck variety times $T^*X$ with potential given by the Grothendieck-Springer resolution times the moment map composed with the natural pairing.
\end{abstract}

\section{Introduction}
The present paper is a follow up of the paper \cite{AK}. Recall the setting in that paper.
\subsection{Coherent Hecke category for the derived loop group and affine Hecke category.}
Let $X$ be a regular Noetherian scheme with an action of a reductive algebraic group $G$. 
Fix a Borel subgroup $B\subset G$. In the introduction to \cite{AK} we explained that it is natural to consider the Hecke category for the pair of the derived group schemes
$(L_{\text{top}}(G), L_{\text{top}}(B))$ of derived loops with values in $G$ (resp., in $B$).

 The category
  $\text{QCHecke}(L_{\text{top}}(G), L_{\text{top}}(B))$ 
is a monoidal triangulated category via the convolution functor.  It is expected to act naturally 
on the category of $L_{\text{top}}(B)$-equivariant coherent sheaves on the derived scheme of derived loops with values in the scheme $X$. Formal definitions of the categories above
would rely  heavily on Derived Algebraic Geometry and are unknown to the authors. Thus we use the picture as a source of inspiration and act by analogy with the case of $B$-equivariant 
coherent sheaves on $X$. Our actual construction follows a different direction.

In \cite{BN} Ben-Zvi and Nadler make precise sense of
  $\text{QCHecke}(L_{\text{top}}(G), L_{\text{top}}(B))$ 
in classical algebro-geometric terms. Namely they identify the category with the affine Hecke category of Bezrukavnikov and prove that
$$
  \text{QCHecke}(L_{\text{top}}(G), L_{\text{top}}(B))\isomap
   D^b\Coh^G(St).
  $$
  Here $St$ denotes the Steinberg variety for the group $G$.
  \subsection{Equivariant matrix factorizations.}
  In \cite{AK} we gave a precise definition of the module category discussed above and understood informally as 
  $\Coh^{L_{\text{top}}(B)}(L_{\text{top}}(X))$. Namely, we considered the moment map $$h: T^*X\times Lie(B)\to \mathbb{A}^1$$ and the corresponding equivariant derived category of
  matrix factorizations 
  $\text{DMF}^B(T^*X\times Lie(B),h)$.
  
  In our paper, we construct a monoidal action of the category $D^b\Coh^G(St)$ on this category. For technical reasons it is easier to work with geometric objects (coherent sheaves etc) 
  equivariant with respect to the reductive group $G$. 
  
  We replace the category $\text{DMF}^B(T^*X\times Lie(B),h)$ by the equivalent category of $G$-equivariant matrix factorizations on 
  $$T^*X\times
   \frac{Lie(B)\times G}{B}$$
    Notice that 
   $\tilde{\mathfrak{g}}:
   =\frac{Lie(B)\times G}{B}$ is one of the constructions of the Grothendieck variety for $G$. The potential for the category of matrix factorizations is given by the composition
   
  $$
  T^*X\times
     \tilde{\mathfrak{g}}\to \mathfrak{g}^*\times \mathfrak{g}\to \mathbb{A}^1.
     $$
     Here the first map is a product of the canonical projection with the moment map for $G$-action. The second map is the canonical bilinear pairing.

The main result of the paper is the following statement.

\begin{thm} \label{MThm}
There exists a natural monoidal action of the affine Hecke category $D^b\Coh^G(St)$ on the derived category of equivariant matrix factorizations
$\operatorname{DMF}^G(T^*X\times\tilde{\mathfrak{g}},h)$.
\end{thm}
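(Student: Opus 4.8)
The plan is to construct the action by convolution with kernels, following the Fourier--Mukai paradigm adapted to matrix factorizations. Recall that $St = \gtil \times_{\mathfrak{g}} \gtil$, and write $\pi: \gtil \to \mathfrak{g}$ for the Grothendieck--Springer map, so that a point of $St$ is a pair $(\tilde y_1, \tilde y_2)$ with $\pi(\tilde y_1) = \pi(\tilde y_2)$. Consider the space $T^*X \times \gtil \times \gtil$ with its three partial projections $p_{12}, p_{13}$ to $T^*X\times\gtil$ (remembering the first, resp.\ second, copy of $\gtil$) and $p_{23}$ to $\gtil\times\gtil$. Given a kernel $\Kcal \in D^b\Coh^G(St)$ and a matrix factorization $\Mcal \in \operatorname{DMF}^G(T^*X\times\gtil, h)$, I would define
$$\Kcal * \Mcal := p_{13,*}\big(p_{12}^*\Mcal \otimes p_{23}^*\Kcal\big),$$
where $p_{23}^*\Kcal$ is pulled back along $p_{23}$ followed by the closed embedding $St \hookrightarrow \gtil\times\gtil$. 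The burden of the proof is to show that each operation lands in the correct category of matrix factorizations and that the composite is monoidal.

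First, the potential bookkeeping. Denote by $\mu: T^*X \to \mathfrak{g}^*$ the moment map and by $h_{ij}$ the pullback of $h$ under $p_{ij}$ for $ij = 12, 13$; explicitly $h_{12}(\xi, \tilde y_1, \tilde y_2) = \langle \mu(\xi), \pi(\tilde y_1)\rangle$ and $h_{13}(\xi, \tilde y_1, \tilde y_2) = \langle \mu(\xi), \pi(\tilde y_2)\rangle$. The object $p_{12}^*\Mcal$ is a matrix factorization for $h_{12}$, while $p_{23}^*\Kcal$ carries the zero potential, so the tensor product is a matrix factorization for $h_{12}$. The crucial observation is that $p_{23}^*\Kcal$ is scheme-theoretically supported on $T^*X\times St$, and on this locus $\pi(\tilde y_1) = \pi(\tilde y_2)$, hence $h_{12} = h_{13}$ there. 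Consequently the tensor product may equally be regarded as a matrix factorization for $h_{13} = p_{13}^* h$. Pushing forward along $p_{13}$---which is proper once restricted to the support $T^*X\times St$, since $\pi$ is proper---then yields, via the projection formula, a matrix factorization for $h$ on $T^*X\times\gtil$. Making this precise requires the functoriality of equivariant (absolute) matrix factorizations under $G$-equivariant pullback, tensor product, and proper pushforward with matching potentials; I would either invoke these from the formalism set up in \cite{AK} or establish them directly, the proper pushforward being the delicate point.

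Next, monoidality. For the associativity constraint I would introduce the quadruple product $T^*X\times\gtil\times\gtil\times\gtil$ together with its projections and prove $(\Kcal_1 * \Kcal_2)*\Mcal \simeq \Kcal_1 * (\Kcal_2 * \Mcal)$ by the usual base-change and projection-formula manipulation, now carried out in the category of matrix factorizations and with the potentials tracked as above; the two Steinberg conditions on the kernels force all the intermediate potentials to agree on the relevant supports, so no new potential appears. For the unit I would take the structure sheaf of the diagonal $\gtil \hookrightarrow St$ and check, by a direct computation with the correspondence collapsing onto the diagonal, that convolution with it is naturally isomorphic to the identity functor. Coherence of the associator and unit isomorphisms then follows from the functoriality of base change, exactly as in the classical coherent convolution case.

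The main obstacle I anticipate is the interaction of matrix factorizations with pushforward along $p_{13}$: one must guarantee both that the pushforward preserves the bounded, coherent, $G$-equivariant, absolute matrix-factorization category and that the potential is correctly transported, and this hinges on reconciling the a priori potential $h_{12}$ with the target potential $h$ using only the support condition coming from $St$. Establishing this reconciliation cleanly---ideally as a lemma stating that a matrix factorization for $f$ supported where $f = g$ is canonically one for $g$, compatibly with proper pushforward---is the technical heart of the argument; the associativity and unit checks are then formal consequences of the standard correspondence calculus.
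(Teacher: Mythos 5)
Your proposal is the ``naive'' direct convolution that the paper considers and explicitly rejects: in Section 6 the authors write that if one uses the exact BR formula ``the potentials will not match and we will land in the wrong category,'' and the gap in your argument is precisely the device you propose to fix this, namely the lemma that ``a matrix factorization for $f$ supported where $f=g$ is canonically one for $g$, compatibly with proper pushforward.'' That lemma conflates two notions of support. The reinterpretation of an $h_{12}$-factorization as an $h_{13}$-factorization is valid only when the \emph{terms} of the factorization are scheme-theoretically annihilated by the ideal of $T^*X\times St$ (then $h_{12}-h_{13}\in I_{St}$ acts by zero, so the curvatures literally coincide). But this property is destroyed by exactly the replacements that define the derived functors you need on both sides of the reinterpretation: the derived tensor product requires a locally free replacement of one factor, and $Rp_{13*}$ requires an injective (or finite-injective-dimension) replacement, and neither has terms supported on $T^*X\times St$. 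The notion of support that survives passage to the absolute derived category is the category-theoretic one (the paper's Definition in Section 6.2), and for an object that is merely category-theoretically supported on $T^*X\times St$ the curvature is genuinely $h_{12}$: there is no canonical $h_{13}$-structure at all. Equivalently, by the projection formula your tensor product is $i_*$ of a factorization living on $T^*X\times St$ with the restricted potential, so your construction secretly does matrix-factorization theory \emph{on} $St$ --- but $St$ is singular, and the entire functor calculus you want to invoke (locally free and injective replacements, projection formula, base change, all imported from Efimov--Positselski) is established only for smooth schemes. For the same reason your associativity argument is not ``formal'': the potential-switching step sits in the middle of the composite functor and must be commuted past $Rp_{13*}$ and past the coherent $\otimes^L$ computing $\Kcal_1*\Kcal_2$, which is not part of any standard correspondence calculus.

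The paper's actual route is designed to avoid this entirely. Instead of kernels on $St$, it takes kernels to be matrix factorizations on the \emph{smooth} space $\gtil\times\gtil\times\g^*$ with potential $h(y_1,y_2,\phi)=\phi(\nu(y_1)-\nu(y_2))$; then the identities $w\circ p_{23}+h\circ p=w\circ p_{13}$ and $h\circ p_{12}+h\circ p_{23}=h\circ p_{13}$ hold \emph{identically}, not just on a support locus, so the convolution stays inside the established equivariant MF calculus, and the Steinberg support condition is used only where it is harmless --- to make the final non-proper projection proper on supports so that coherence is preserved. The bridge back to the Hecke category is a Koszul duality functor $\kappa$ (tensoring with the Koszul complex $\Lambda(\g^*)\otimes\Sym(\g)$, whose differential squares to $h$), which is proved to be monoidal. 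Note also that even with this better-behaved setup the paper does \emph{not} construct a monoidal functor on all of $D^b\Coh^G(St)$: $\kappa$ is only defined on the subcategory $D_{\Perf,\,\gtil\times_\g\gtil}$ containing the braid group generators, and what is actually proved is a $B_{\af}$-action on $\Dabs(\Coh^G(\gtil\times T^*X),w)$. This is a useful calibration: the full-strength monoidal action your proposal aims at in one step is substantially harder than the support-reconciliation lemma you defer, and that lemma itself is where the approach breaks down.
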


\subsection{The structure of the paper.}

In section 2 we recall the construction by Bezrukavnikov and Riche \cite{BR} of a categorical action of the affine braid group on the derived category of the dg-category of $G \times \mathbb{G}_m$-equivariant coherent sheaves on $\gtil$, $\Dcal^b \Coh^{G \times \mathbb{G}_m}(\gtil)$. The action is constructed by defining a monoidal action of another category on $\Dcal^b \Coh^{G \times \mathbb{G}_m}(\gtil)$ and then identifying elements in this category whose convolution satisfy affine braid group relations.

In section 3 we recall the basic definitions of matrix factorizations and their absolute derived category.

In section 4 we elaborate on the setup mentioned in section 12.

In section 5 we introduce the functors of derived push-forward, pull-back and tensor product for equivariant matrix factorizations. We prove some relations between these functors which are needed in the subsequent sections.

In section 6 we define a convolution product. Let $X$ and $Y$ be smooth $G$-schemes and $V$ a vector space. Assume that we have equivariant morphisms $\mu : X \to V^*$ and $\nu: Y \to V$. This defines two functions
\begin{gather}
w : Y \times X \to k, \qquad (y,x) \mapsto \mu(x)(\nu(y)),\\
h : Y \times Y \times V^* \to k, \qquad (y_1,y_2,v) \mapsto v(\nu(y_1)-\nu(y_2)).
\end{gather}
Using a convolution, which is a modified analogy of the convolution product used in \cite{BR}, we define a monoidal structure on the category $\Dabs(\QCoh^G(Y \times Y \times V^*),h)$. We also define a monoidal action of this category on the category $\Dabs(\QCoh^G(Y \times X),w)$. Finally, we prove that imposing some support condition on $\Dabs(\QCoh^G(Y \times Y \times V^*),h)$ this induces a monoidal action on $\Dabs(\Coh^G(Y \times X),w)$.

In section 7 we finish the proof of \ref{MThm} by constructing a monoidal functor from a subcategory of the monoidal category used by Bezrukavnikov and Riche to $\Dabs(\QCoh^G(Y \times Y \times V^*),h)$. The subcategory contains the generators for the $B_\af$-action and the image of these under the constructed functor lies in the subcategory of $\Dabs(\QCoh^G(Y \times Y \times V^*),h)$ preserving $\Dabs(\Coh^G(Y \times X),w)$. Thus, convolution with the images of the generators produces a $B_\af$-action on $\Dabs(\Coh^G(Y \times X,w)$.

\subsection{Acknowledgments}
We would like to thank V. Baranovsky, R. Bezrukavnikov, A. I. Efimov and A. Polishchuk and S. Riche for helpful comments.

Both authors' research was supported in part by center of excellence grants "Centre for Quantum Geometry of Moduli Spaces" and by FNU grant "Algebraic Groups and Applications".

\section{The braid group action of Bezrukavnikov and Riche}\label{BraidGrpAct}

In this section we recall the construction of an action of the (extended) affine braid group by Bezrukavnikov and Riche in \cite{BR} and \cite{Ric}. Let $G$ be a reductive algebraic group over an algebraically closed field $k$. Their proof works both in characteristic zero and when the characteristic is bigger than the Coxeter number. Fix a maximal torus $T$ and Borel subgroup $B$ containing it. Recall that the extended affine braid group has the following presentation.

\begin{thm}\cite[Thm 1.1.3]{Ric}
The extended affine braid group, $B_\af$, admits a presentation with generators $\{T_\alpha \mid \alpha \in \Pi \} \cup \{\theta_x \mid x \in \mathbb{X}\}$ and relations:
\begin{enumerate}
\item $T_\alpha T_\beta T_\alpha \cdots = T_\beta T_\alpha T_\beta \cdots $ with $m(\alpha, \beta)$ factors on each side.
\item $\theta_x \theta_y=\theta_{x+y}$.
\item $T_\alpha \theta_x =\theta_x T_\alpha$ if $\langle x, \alpha \rangle=0$, i.e. $s_\alpha(x)=x$.
\item $\theta_x = T_\alpha \theta_{x-\alpha} T_\alpha$ if $\langle x, \alpha \rangle=1$, i.e. $s_\alpha(x)=x-\alpha$.
\end{enumerate}
\end{thm}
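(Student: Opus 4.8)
The plan is to recognize $B_\af$ as the Artin--Tits braid group attached to the extended affine Weyl group $W_\af^{\mathrm{ext}} = \mathbb{X} \rtimes W$, where $W$ is the finite Weyl group, and to prove the stated \emph{Bernstein--Lusztig presentation} by comparing it with the primitive Coxeter-type presentation. In the latter, $B_\af$ is generated by lifts $T_i$ of the affine simple reflections ($i = 0,1,\dots,n$) together with the length-zero abelian subgroup $\Omega \cong \mathbb{X}/\mathbb{Z}R$, subject to the affine braid relations and the action of $\Omega$ by affine Dynkin-diagram automorphisms; equivalently $B_\af$ is generated by the $T_w$ ($w \in W_\af^{\mathrm{ext}}$) with $T_v T_w = T_{vw}$ whenever $\ell(vw) = \ell(v) + \ell(w)$. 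Let $\Gamma$ denote the abstract group presented by $\{T_\alpha\} \cup \{\theta_x\}$ with relations (1)--(4). I would construct mutually inverse homomorphisms $\psi \colon \Gamma \to B_\af$ and $\phi \colon B_\af \to \Gamma$; the content of the theorem is that these are well defined and inverse to one another.

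To build $\psi$, the key input is the length function on $W_\af^{\mathrm{ext}}$ and the fact that translation by a dominant weight is length-additive: for dominant $y,z$ one has $\ell(t_y t_z) = \ell(t_y) + \ell(t_z)$, hence $T_{t_y} T_{t_z} = T_{t_{y+z}}$ in $B_\af$. Writing an arbitrary $x \in \mathbb{X}$ as $x = y - z$ with $y,z$ dominant, I set $\psi(\theta_x) = T_{t_y} T_{t_z}^{-1}$; comparing two decompositions via $y + z' = y' + z$ shows this is independent of the choice and at the same time yields relation (2), so $x \mapsto \psi(\theta_x)$ is a homomorphism $\mathbb{X} \to B_\af$. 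Sending $T_\alpha$ to the finite braid generator inside $B_\af$, relation (1) is just the finite braid relation, while (3) and (4) are lifted from the identities $t_x s_\alpha = s_\alpha t_x$ (valid when $\langle x, \alpha \rangle = 0$) and $s_\alpha t_x s_\alpha = t_{x-\alpha}$ (valid when $\langle x, \alpha \rangle = 1$) through the relevant length-additive factorizations. This shows $\psi$ is well defined.

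For $\phi$ I would send each finite $T_\alpha$ to itself and express the remaining generators of $B_\af$ as explicit words in the $T_\alpha$ and $\theta_x$: the affine simple generator $T_0$ via an identity of the form $s_0 = t_{\vartheta} s_{\vartheta}$ for the highest root $\vartheta$, and each length-zero element of $\Omega$ via a minuscule $\theta_x$ multiplied by a finite braid word. Surjectivity of $\psi$ (equivalently, that these words generate) is immediate from this. One then checks that the defining relations of $B_\af$ (affine braid relations and the $\Omega$-action) hold in $\Gamma$, so that $\phi$ is well defined, and finally that $\phi$ and $\psi$ are mutually inverse by evaluating on generators.

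The crux is the verification in the previous step that the affine braid relations involving $T_0$ become, in $\Gamma$, consequences of relations (1)--(4). This is the Bernstein-relation bookkeeping: one must commute finite braid generators past the elements $\theta_x$ while tracking lengths, and here relation (4) must be used with care, since the asymmetric form $\theta_x = T_\alpha \theta_{x-\alpha} T_\alpha$ (with $T_\alpha$, not $T_\alpha^{-1}$, on both sides) means there is \emph{no} naive conjugation formula $T_\alpha \theta_x T_\alpha^{-1} = \theta_{s_\alpha x}$; the correct commutation rule for general $\langle x, \alpha \rangle$ must be derived by combining (2), (3) and (4). I expect this translation between the two presentations --- showing that relations (1)--(4) are exactly strong enough to reconstruct the affine braid relations, and no stronger --- to be the main obstacle, the rest being formal once the straightening rules are in place.
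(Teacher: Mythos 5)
The paper does not prove this statement at all: it is recalled verbatim from \cite[Thm 1.1.3]{Ric} as background, and nothing later in the paper uses the proof, only the presentation itself. So there is no internal argument to compare yours against; your proposal has to be judged on its own terms, against the proof in the cited literature.

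On those terms, your strategy is exactly the standard one (and essentially the one behind the result in \cite{Ric}): take the length-additive presentation $T_vT_w=T_{vw}$ for $\ell(vw)=\ell(v)+\ell(w)$ as the definition of $B_\af$, send $\theta_x\mapsto T_{t_y}T_{t_z}^{-1}$ for $x=y-z$ with $y,z$ dominant (well defined because dominant translations are length-additive, hence the corresponding $T_{t_y}$ commute), verify (1)--(4), and invert by expressing $T_0$ and the length-zero subgroup $\Omega$ through the new generators, e.g. $T_0\mapsto\theta_\vartheta T_{s_\vartheta}^{-1}$. Two caveats. First, when verifying (3) and (4) under $\psi$ you must choose the dominant decomposition $x=y-z$ compatibly with $\alpha$; for (3) one needs $\langle y,\alpha\rangle=\langle z,\alpha\rangle=0$, which you can arrange by taking $z$ a large multiple of $\sum_{\beta\neq\alpha}\omega_\beta$ --- routine, but it is a step, not a remark, and your sketch glosses over it. Second, and more substantively, you explicitly defer the crux: the check that relations (1)--(4) in $\Gamma$ imply the affine braid relations involving $T_0$ and the $\Omega$-compatibilities, i.e. that $\phi$ is well defined. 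That verification is where the entire content of the theorem sits (it is precisely where the asymmetric relation (4), with $T_\alpha$ rather than $T_\alpha^{-1}$ on both sides, earns its keep), and "I expect this bookkeeping to work" is not a proof of it. As written, your text is a correct and well-oriented plan whose hardest step is named but not executed; carried out in full, it would reproduce the known proof rather than diverge from it.
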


We sketch their construction. Let $X$ and $Y$ be $G$-varieties. Denote the projections $X \times Y \to X$ and $X \times Y \to Y$ by $p_X$ and $p_Y$ respectively. These projections are not assumed to be proper, so push-forward might not take coherent sheaves to coherent sheaves. This problem is fixed by introducing the following full subcategory
\[ D^b_{\text{prop}}(\Coh(X \times Y)) \subset D^b(\Coh(X \times Y)) \]
in the following way. An object in $D^b(\Coh(X \times Y))$ belongs to $D^b_{\text{prop}}(\Coh(X \times Y))$ if its cohomology sheaves are topologically supported on a closed subscheme $Z \subset X \times Y$ such that the restrictions of $p_X$ and $p_Y$ to $Z$ are proper. They define a convolution product
\begin{gather}
* : D^b_{\text{prop}}(\Coh^G(Y \times Z)) \times D^b_{\text{prop}}(\Coh^G(X \times Y)) \to D^b_{\text{prop}}(\Coh^G(X \times Z)),\\
\Fcal * \Gcal :=R p_{X,Z*}(p_{X,Y}^* \Fcal \otimes^L_{X \times Y \times Z} p_{Y,Z}^* \Gcal),
\end{gather}
where $p_{X,Y}, p_{Y,Z}$ and $p_{X,Z}$ are the projections from $X \times Y \times Z$ to the listed factors. Any $\Fcal \in D^b_{\text{prop}}(\Coh^G(X \times Y)$) defines a functor
\begin{gather}
F^\Fcal_{X \to Y} : D^b(\Coh^G(X)) \to D^b(\Coh^G(X)),\\
\Mcal \mapsto R p_{Y*}(\Fcal \otimes^L_{X \times Y} p_X^*\Mcal).
\end{gather}

\begin{lemma}\cite[Lemma 1.2.1]{Ric}
Let $\Fcal \in D^b_{\text{prop}}(\Coh^G(X \times Y))$ and $\Gcal \in D^b_{\text{prop}}(\Coh^G(Y \times Z))$. Then
\[ F^\Gcal_{Y \to Z} \circ F^\Fcal_{X \to Y} \iso F^{\Gcal * \Fcal}_{X \to Z}. \]
\end{lemma}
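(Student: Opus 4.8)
The plan is to prove this standard ``composition of kernels'' identity by realizing both functors as integral transforms on the common triple product $X \times Y \times Z$ and reducing the statement to base change and the projection formula. Write $q_X, q_Y, q_Z$ for the projections of $X \times Y \times Z$ onto the single factors, keep the notation $p_{X,Y}, p_{Y,Z}, p_{X,Z}$ for the projections onto the indicated pairs, and write $p_X, p_Y$ for the two projections of $X \times Y$, $r_Y, r_Z$ for those of $Y \times Z$, and $s_X, s_Z$ for those of $X \times Z$. The key geometric observation is that
\[ X \times Y \times Z \iso (X \times Y) \times_Y (Y \times Z), \]
so the square with vertices $X \times Y \times Z,\ X \times Y,\ Y \times Z,\ Y$ and edges $p_{X,Y}, p_{Y,Z}, p_Y, r_Y$ is Cartesian. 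Since every map in sight is a projection from a product, hence flat, flat base change is available on this square.

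First I would expand the left-hand side. Unwinding the definitions,
\[ F^\Gcal_{Y \to Z}\bigl(F^\Fcal_{X \to Y}(\Mcal)\bigr) = R r_{Z*}\Bigl(\Gcal \otimes^L r_Y^*\, R p_{Y*}\bigl(\Fcal \otimes^L p_X^* \Mcal\bigr)\Bigr). \]
Applying flat base change on the Cartesian square rewrites the inner term $r_Y^* R p_{Y*}(\Kcal)$ as $R p_{Y,Z*}\, p_{X,Y}^* \Kcal$. Combining this with compatibility of pullback and derived tensor product, together with $p_X \circ p_{X,Y} = q_X$, turns the left-hand side into
\[ R r_{Z*}\Bigl(\Gcal \otimes^L R p_{Y,Z*}\bigl(p_{X,Y}^*\Fcal \otimes^L q_X^* \Mcal\bigr)\Bigr). \]

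Next I would apply the projection formula twice and collapse the iterated pushforwards. The projection formula for $p_{Y,Z}$ pulls $\Gcal$ inside as $p_{Y,Z}^*\Gcal$; then, using $r_Z \circ p_{Y,Z} = q_Z = s_Z \circ p_{X,Z}$ to merge the two pushforwards and writing $q_X^* \Mcal = p_{X,Z}^* s_X^* \Mcal$, a second application of the projection formula for $p_{X,Z}$ extracts the factor involving $\Mcal$:
\[ R s_{Z*}\Bigl(R p_{X,Z*}\bigl(p_{X,Y}^* \Fcal \otimes^L p_{Y,Z}^* \Gcal\bigr) \otimes^L s_X^* \Mcal\Bigr). \]
Recognizing $R p_{X,Z*}(p_{X,Y}^* \Fcal \otimes^L p_{Y,Z}^* \Gcal)$ as the convolution $\Gcal * \Fcal$, this is exactly $F^{\Gcal * \Fcal}_{X \to Z}(\Mcal)$, giving the desired isomorphism.

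The main obstacle will be bookkeeping rather than conceptual. I must check that each step is legitimate inside the bounded, $G$-equivariant, proper-support categories, so that no pushforward escapes $D^b_{\text{prop}}(\Coh^G(-))$: this is precisely where the proper-support hypotheses on $\Fcal$ and $\Gcal$ enter, ensuring that $R p_{Y*}$, $R p_{Y,Z*}$ and $R p_{X,Z*}$ preserve boundedness and coherence, and that base change and the projection formula hold for the a priori unbounded complexes produced at each stage. I also need finite Tor-dimension to make the derived tensor products well defined on the bounded categories, $G$-equivariance of every canonical map, and, finally, naturality of the whole chain of isomorphisms in $\Mcal$, so that the comparison is an isomorphism of \emph{functors} and not merely a pointwise one. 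Verifying these compatibilities carefully is the only real work.
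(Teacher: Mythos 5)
Your proposal is correct and is essentially the proof of this statement: the paper itself does not reprove the lemma but cites it from \cite[Lemma 1.2.1]{Ric}, and the argument there is exactly your standard composition-of-kernels computation, using flat base change on the Cartesian square $X \times Y \times Z \iso (X\times Y)\times_Y (Y\times Z)$ followed by two applications of the projection formula. Your closing remarks about proper supports (so that the pushforwards stay in $D^b_{\text{prop}}(\Coh^G(-))$) and naturality in $\Mcal$ are precisely the bookkeeping points that make the cited proof go through.
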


The categorical action of $B_\af$ to be constructed will be a weak action. Recall that a weak action of a group $A$ on a category $\Ccal$ is a group morphism from $A$ to the group of isomorphism classes of auto-equivalences of the category $\Ccal$. Note that no compatibility conditions are imposed on the morphisms. In particular, to construct an action of $B_\af$ on $D^b(\Coh^G(X))$ it suffices to find objects in $(D^b_{\text{prop}}(\Coh^G(X \times X)),*)$, whose convolution with each other satisfy affine braid group relations.

In this construction the variety is going to be the Grothendieck variety $\gtil$. Recall that $\gtil$ is smooth and that $\nu : \gtil \to \g$ is proper. It follows that the base change morphisms $\gtil \times_\g \gtil \to \gtil$ are proper, so we have a full monoidal subcategory
\[ D^b_{\gtil \times_\g \gtil}(\Coh^G(\gtil \times \gtil)) \subset D^b_{\text{prop}}(\Coh^G(\gtil \times \gtil)), \]
whose objects are topologically supported on $\gtil \times_\g \gtil$. It is a monoidal category with $*$. We call this monoidal category the affine Hecke category
\[ \text{Hecke}_{\text{af}}(G,B):=(D^b_{\gtil \times_\g \gtil} (\Coh^G(\gtil \times \gtil)),*). \]
Consider the composition
\[ \gtil \times_\g \gtil \hookrightarrow \gtil \times \gtil \twoheadrightarrow \flag \times \flag. \]
For $w \in W$ we denote by $Z_w$ the closure of the inverse image of the orbit of the point $(B/B, w^{-1}B/B)$ for the diagonal action of $G$. For $x \in \mathbb{X}$ we have the canonical line bundle $\Ocal_\flag(x)$. We define $\Ocal_{\Delta(\gtil)}(x)$ to be the pull-back of $\Ocal_\flag(x)$ along the projection of the diagonal $\Delta(\gtil) \to \flag$. We can now state the main result of Bezrukavnikov and Riche, which can be seen as a categorification of the result about representations of the Weyl group in chapter 2.

\begin{thm}\cite[Theorem 1.3.2]{BR} \label{BRBraidAct}
There is a categorical $B_\af$-action on $D^b(\Coh^G(\gtil))$ in which $T_{\alpha_i}$ acts by convolution with $\Ocal_{Z_{s_{\alpha_i}}}$ and $\theta_x$ acts by convolution with $\Ocal_{\Delta(\gtil)}(x)$.
\end{thm}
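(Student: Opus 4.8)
The plan is to reduce everything to the monoidal category $(\Dcal^b_{\gtil\times_\g\gtil}(\Coh^G(\gtil\times\gtil)),*)=\text{Hecke}_{\af}(G,B)$ by means of the Lemma identifying composition of kernel transforms with convolution of kernels. Since a weak action only remembers isomorphism classes of auto-equivalences, no coherence data need be checked: it suffices to exhibit invertible objects $\Ocal_{Z_{s_{\alpha_i}}}$ and $\Ocal_{\Delta(\gtil)}(x)$ in $\text{Hecke}_{\af}(G,B)$ whose convolutions satisfy the four families of relations in the presentation of $B_\af$, with the diagonal kernel $\Ocal_{\Delta(\gtil)}$ as monoidal unit. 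Each relation thereby becomes a single isomorphism of $G$-equivariant complexes on $\gtil\times\gtil$.

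I would first dispose of the relations that do not involve the braid relation proper. For relation (2), base change along $\Delta(\gtil)\hookrightarrow\gtil\times\gtil$ identifies the convolution of two diagonal line-bundle kernels with the tensor product of the underlying bundles, so that $\Ocal_{\Delta(\gtil)}(x)*\Ocal_{\Delta(\gtil)}(y)\iso\Ocal_{\Delta(\gtil)}(x+y)$; in particular $\theta_x$ is invertible with inverse $\theta_{-x}$. For the commutation relation (3), the hypothesis $\langle x,\alpha\rangle=0$ means $\Ocal_\flag(x)$ is pulled back from $G/P_\alpha$, i.e.\ it is $P_\alpha$-equivariant; since $Z_{s_\alpha}$ is cut out using the minimal parabolic $P_\alpha$, the projection formula lets the twist by $\Ocal(x)$ pass through convolution with $\Ocal_{Z_{s_\alpha}}$ in either order.

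The core of the argument is organised around the minimal parabolics. For each simple $\alpha$, let $\gtil_\alpha$ be the partial Grothendieck--Springer resolution attached to $P_\alpha$ and $\pi_\alpha:\gtil\to\gtil_\alpha$ the projection, lying over the $\mathbb{P}^1$-fibration $\flag\to G/P_\alpha$. The fibre product $\gtil\times_{\gtil_\alpha}\gtil$ has the diagonal $\Delta(\gtil)$ and $Z_{s_\alpha}$ as its two components, and convolution with $\Ocal_{\gtil\times_{\gtil_\alpha}\gtil}$ computes the pull--push $\pi_\alpha^*R\pi_{\alpha*}$. This produces a distinguished triangle relating $\Ocal_{\Delta(\gtil)}$, $\Ocal_{\gtil\times_{\gtil_\alpha}\gtil}$ and $\Ocal_{Z_{s_\alpha}}$ up to a line-bundle twist and a shift. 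From this triangle, relation (4)---the identity $\theta_x=T_\alpha\theta_{x-\alpha}T_\alpha$ when $\langle x,\alpha\rangle=1$---drops out as a rank-one computation, reducing by base change to the cohomology of line bundles of degree $0,\pm1$ along the $\mathbb{P}^1$-fibres. The same triangle exhibits a two-sided convolution inverse for $\Ocal_{Z_{s_\alpha}}$, again a twisted and shifted copy of itself, so that $T_\alpha$ acts by an auto-equivalence.

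The main obstacle is the finite braid relation (1), $T_\alpha T_\beta T_\alpha\cdots=T_\beta T_\alpha T_\beta\cdots$ with $m(\alpha,\beta)$ factors. Because only the rank-two subsystem generated by $\alpha,\beta$ is involved, I would pass to the corresponding Levi and argue type by type ($A_1\times A_1$, $A_2$, $B_2$, $G_2$), the goal being to show that both iterated convolutions equal the single kernel $\Ocal_{Z_w}$ attached to the longest element $w$ of $\langle s_\alpha,s_\beta\rangle$; this is an instance of the independence of iterated convolution on the chosen reduced word, proved via Bott--Samelson type resolutions of $Z_w$. The cleanest check of the identification proceeds over the regular semisimple locus $\g^{\mathrm{rs}}$, where $\gtil\to\g$ becomes a $W$-cover, $\gtil\times_\g\gtil$ splits into $|W|$ graphs of the $W$-action, and each $Z_w$ restricts to the graph of $w$, so that the convolution is literally multiplication in $W$ and the braid relation is automatic. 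The genuinely hard step is to propagate this generic isomorphism over all of $\gtil\times\gtil$: one must show the relevant convolutions are structure sheaves of reduced, Cohen--Macaulay subvarieties with no spurious lower cohomology, so that an isomorphism defined on the dense open locus extends uniquely. Working $G\times\mathbb{G}_m$-equivariantly, with $\mathbb{G}_m$ scaling the fibres of $\gtil\to\g$, supplies the weight bookkeeping that pins down these extensions and excludes correction terms, after which the $\mathbb{G}_m$-equivariance may be forgotten.
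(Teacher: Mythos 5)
The first thing to note is that the paper does not prove this statement at all: it is imported verbatim from Bezrukavnikov--Riche (\cite[Theorem 1.3.2]{BR}), and the surrounding text only recalls the convolution formalism needed to state it. So your proposal has to be measured against the proof in \cite{BR} (and \cite{Ric}), not against anything in this paper. In outline you have reconstructed that proof's architecture correctly: reduce to exhibiting kernels in $(D^b_{\gtil \times_\g \gtil}(\Coh^G(\gtil\times\gtil)),*)$ whose convolutions satisfy the four relations (a weak action needs nothing more), prove relation (2) by base change along the diagonal, relation (3) by the projection formula using that $\Ocal_\flag(x)$ descends to $G/P_\alpha$ when $\langle x,\alpha\rangle=0$, handle relation (4) and the invertibility of $T_\alpha$ via the partial resolution $\gtil\to\gtil_\alpha$ and the decomposition of $\gtil\times_{\gtil_\alpha}\gtil$ into $\Delta(\gtil)$ and $Z_{s_\alpha}$, and reduce the finite braid relations to the statement (quoted in this paper right after the theorem) that convolution along any reduced word for $w$ yields $\Ocal_{Z_w}$.

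The genuine gap is in the step you yourself call ``genuinely hard,'' and it is not merely technical. You propose to verify the identification over the regular \emph{semisimple} locus and then extend across its complement using reducedness, Cohen--Macaulayness and $\mathbb{G}_m$-equivariance. But the non-regular-semisimple locus is a \emph{divisor}: it is cut out by the discriminant, so its preimage in $\gtil$, in $\gtil\times_\g\gtil$ and in each $Z_w$ has codimension one. An isomorphism of sheaves defined off a divisor does not extend uniquely no matter how nice the sheaves are (compare $\Ocal_X$ and $\Ocal_X(D)$, which agree on $X\setminus D$), and $\mathbb{G}_m$-equivariance does not exclude such corrections because the discriminant locus is a cone, hence $\mathbb{G}_m$-stable. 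What the actual proof uses is the locus of \emph{regular} elements $\g_{\mathrm{reg}}$ (minimal centralizer dimension, including regular nilpotents): the complement of $\gtil_{\mathrm{reg}}=\nu^{-1}(\g_{\mathrm{reg}})$ in $\gtil$ has codimension at least $2$, the Weyl group genuinely acts on $\gtil_{\mathrm{reg}}$ (it does not act on all of $\gtil$), $Z_w$ meets the regular part in the graph of $w$, and Serre's condition $S_2$ then gives unique extension across a codimension-$2$ complement. Moreover, to apply any such extension argument one must first know that $\Ocal_{Z_s}*\Ocal_{Z_w}$ is concentrated in cohomological degree $0$ and satisfies $S_2$; this requires the dimension and Tor-vanishing estimates (and the Cohen--Macaulayness of the $Z_w$) that occupy the bulk of \cite{BR}. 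Your sketch presupposes exactly this -- ``structure sheaves of reduced, Cohen--Macaulay subvarieties with no spurious lower cohomology'' -- without providing a mechanism to prove it, and with the wrong open locus it cannot be repaired as stated.
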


Moreover, they prove that for a reduced expression $w=s_{\alpha_{i_1}} \cdots s_{\alpha_{i_n}}$
\[ \Ocal_{Z_{s_{\alpha_{i_1}}}} * \cdots * \Ocal_{Z_{s_{\alpha_{i_n}}}} \iso \Ocal_{Z_w}. \]

In \cite[Section 4 and 5]{BR} Bezrukavnikov and Riche also lift this construction to the DG-category setting. Consider two $G \times \mathbb{G}_m$-equivariant dg-algebras $X=(X_0,\Acal_X)$ and $Y=(Y_0,\Acal_Y)$ and an equivariant DG-morphism of sheaves of DG-algebras $f: X \to Y$. By choosing resolutions (in the appropriate sense) one can define a derived fiber product $X \times_Y^R X$ up to quasi-isomorphism. This is sufficient to get a well-defined category
\[ \Kcal_{X,Y} :=\Dcal \QCoh^{G \times \mathbb{G}_m}(X \times_Y^R X). \]

\begin{rem}
Assume that $X$ and $Y$ are ordinary schemes (i.e. $\Acal_X=\Ocal_X$ and $\Acal_Y=\Ocal_Y$) satisfying 
\[\text{Tor}_{\neq 0}^{f^{-1}\Ocal_Y}(\Ocal_X,\Ocal_X)=0.\]
Then the dg-scheme $X \times_Y^R X$ is just the ordinary fiber product $X_0 \times_{Y_0} X_0$ and
\[\Kcal_{X,Y} \iso \Dcal \QCoh^{G \times \mathbb{G}_m}(X_0 \times_{Y_0} X_0).\] 
This is the case when $X=\gtil$, $Y=\g$ and $f$ is the Grothendieck-Springer resoluion (see \cite[Section 1.2.2]{Bez}).
\end{rem}

The category $\Kcal_{X,Y}$ is monoidal. When $f$ is smooth (any quasi-projective morphism can be replaced by a smooth morphism using a trick explained in \cite[Section 3.7]{BR}) the derived fiber product is the ordinary fiber product of dg-schemes $X \times_Y X$ and the convolution product is defined in the following way. Let $q_{ij} : X_0 \times_{Y_0} X_0 \times_{Y_0} X_0 \to X_0 \times_{Y_0} X_0$ be the projection to the $(i,j)$-th factor. Consider the dg-schme
\[ Z_{ij} :=(X_0 \times_{Y_0} X_0 \times_{Y_0} X_0, q_{ij}^* \Acal_{X \times_Y X}.) \]
It has a natural morphism of dg-schemes $p_{ij} : Z_{ij} \to X \times_Y X$. Let $q_2: X_0 \times_{Y_0} X_0 \times_{Y_0} X_0 \to X_0$ be projection to the second factor, and consider the sheaf of dg-algebras $q_2^* \Acal_X$ on $X_0 \times_{Y_0} X_0 \times_{Y_0} X_0,$. Then there exist a derived tensor product
\[\otimes_{q_2^* \Acal_X}^L :\Dcal \QCoh^{G \times \mathbb{G}_m}(Z_{12}) \times \Dcal \QCoh^{G \times \mathbb{G}_m}(Z_{23}) \to \Dcal \QCoh^{G \times \mathbb{G}_m}(Z_{13})\]
Using this they define the convolution product $* : \Kcal_{X,Y} \times \Kcal_{X,Y} \to \Kcal_{X,Y}$
\[ \Mcal * \Ncal :=R p_{13*}(Lp_{12}^*\Ncal \otimes_{q^* \Acal_X}^L L p_{23}^* \Mcal). \]
Let $p_1,p_2 : X \times_Y X \to X$ be the two projections. There is a monoidal action of $\Kcal_{X,Y}$ on $\Dcal \QCoh^{G \times \mathbb{G}_m}(X)$.
\begin{gather}
\Kcal_{X,Y} \times  \Dcal \QCoh^{G \times \mathbb{G}_m}(X) \to \Dcal \QCoh^{G \times \mathbb{G}_m}(X),\\
\Mcal * \Ncal :=R p_{2*}(\Mcal \otimes^L_{X \times_Y^R X} Lp_1^* \Ncal).
\end{gather}

\begin{defin}
Let $\Kcal^{\Coh}_{X,Y}$ be the full subcategory of $\Kcal_{X,Y}$ whose objects are complexes with only finitely many non-zero cohomology sheaves, each of which is a coherent sheaf on $X \times_Y X$.
\end{defin}

\begin{prop}\cite[Prop. 4.2.1]{BR}
Assume that $X$ and $Y$ are ordinary schemes and that $f$ is proper. Then $\Kcal^{\Coh}_{X,Y}$ is a monoidal category with the restricted convolution product and the action of $\Kcal_{X,Y}^{\Coh}$ on $\Dcal \QCoh^{G \times \mathbb{G}_m}(X)$ preserves the full subcategory $\Dcal^b \Coh^{G \times \mathbb{G}_m}(X)$.
\end{prop}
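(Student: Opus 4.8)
The plan is to treat the two assertions in parallel, reducing each to a closure statement. Since the ambient category $\Kcal_{X,Y}$ is already monoidal and $\Dcal\QCoh^{G\times\mathbb{G}_m}(X)$ already carries its action, the associativity and unit constraints will be inherited automatically; the only genuine content is that (a) the restricted convolution of two objects of $\Kcal^{\Coh}_{X,Y}$ again lies in $\Kcal^{\Coh}_{X,Y}$, and (b) the convolution of an object of $\Kcal^{\Coh}_{X,Y}$ with a bounded coherent complex on $X$ is again bounded coherent. For the monoidal structure one also notes that the unit object is the structure sheaf of the diagonal, which is coherent and hence already belongs to $\Kcal^{\Coh}_{X,Y}$, so once closure under $*$ is established the monoidal structure restricts without further work.

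For assertion (a) I would decompose the convolution $\Mcal * \Ncal = Rp_{13*}(Lp_{12}^*\Ncal \otimes^L_{q_2^*\Acal_X} Lp_{23}^*\Mcal)$ into its constituent derived functors and verify that each preserves bounded coherent cohomology. The derived pullbacks $Lp_{12}^*$, $Lp_{23}^*$ and the relative tensor product over $q_2^*\Acal_X$ all have bounded cohomological amplitude: this is exactly the point at which smoothness of $f$, arranged by the replacement trick of \cite[Section 3.7]{BR}, is used, since it guarantees that the double and triple fiber products have finite Tor-dimension over the relevant sheaves of dg-algebras, so that a bounded complex is sent to a bounded complex. Coherence is preserved because the pullback of a coherent sheaf is coherent and the tensor product of coherent sheaves over a coherent sheaf of algebras is coherent. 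The remaining functor $Rp_{13*}$ is where properness of $f$ enters: the morphism $p_{13}$ is obtained from $f$ by base change along a projection, hence is proper, so by the finiteness theorem for proper morphisms it preserves coherent cohomology, and since a proper morphism of Noetherian schemes has finite cohomological dimension it also preserves boundedness. Composing the three steps yields $\Mcal * \Ncal \in \Kcal^{\Coh}_{X,Y}$.

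Assertion (b) is proved by the same bookkeeping applied to $\Mcal * \Ncal = Rp_{2*}(\Mcal \otimes^L_{X\times_Y^R X} Lp_1^*\Ncal)$: the pullback $Lp_1^*$ and the tensor product are of finite Tor-dimension and preserve coherence, while $p_2 : X\times_Y X \to X$ is again a base change of $f$ and therefore proper, so $Rp_{2*}$ keeps cohomology coherent and bounded. The main obstacle throughout is the boundedness accounting in the dg-scheme setting rather than coherence: one must ensure that the sheaves of dg-algebras $q_2^*\Acal_X$ (and the structure sheaf of $X\times_Y^R X$) have finite Tor-dimension, which is precisely what the smoothness of $f$ supplies, after which properness does the remaining work of propagating coherence and boundedness through the pushforwards.
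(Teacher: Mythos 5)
First, a point of order: the paper itself contains no proof of this proposition --- it is quoted from \cite{BR} (Prop.~4.2.1) as background --- so your proposal can only be measured against the argument in that reference. Your skeleton is the right one and agrees with it: monoidality and the unit constraints are inherited once closure is established; the unit $\Delta_*\Ocal_X$ has a single coherent cohomology sheaf, so it lies in $\Kcal^{\Coh}_{X,Y}$; closure is checked functor by functor; flatness of the underlying projections (supplied by the smooth-replacement trick) handles the pullbacks; and properness of $f$, of which $p_{13}$ and $p_2$ are base changes, is exactly what makes the pushforwards preserve coherence and boundedness.

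The genuine gap is in the middle step. You justify boundedness of $Lp_{12}^*\Ncal \otimes^L_{q_2^*\Acal_X} Lp_{23}^*\Mcal$ by asserting that smoothness of $f$, arranged by the replacement trick, ``guarantees that the double and triple fiber products have finite Tor-dimension over the relevant sheaves of dg-algebras.'' That is not what the trick gives. It makes the underlying morphisms of schemes flat, hence the pullbacks exact, but it says nothing about the Tor-amplitude of the relative tensor product: since $\Acal_X$ is quasi-isomorphic to $\Ocal_X$, the functor $\otimes^L_{q_2^*\Acal_X}$ is in effect a derived tensor product over $\Ocal_X$ (the middle factor), and its amplitude is governed by the singularities of $X$ itself, not by any property of the morphism $f$. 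Indeed, with only the hypotheses as literally stated the conclusion fails: take $Y=\Spec k$, $X=\Spec k[t]/(t^2)$, $f$ proper; then $X\times^R_YX = X\times X$, convolution is Fourier--Mukai composition, and the skyscraper kernel at $(0,0)$ convolved with itself has cohomology containing $\operatorname{Tor}^{k[t]/(t^2)}_\bullet(k,k)$, which is nonzero in infinitely many degrees, so neither part of the proposition holds for it. What the correct argument really uses at this point is regularity of $X$ --- in force in \cite{BR} and in this paper's application, where $X=\gtil$ is smooth --- which gives every bounded coherent complex finite Tor-amplitude over $\Ocal_X$ and hence boundedness of the tensor step; the same input, which you never invoke, is needed in your part (b) for $\Mcal\otimes^L_{X\times^R_YX} Lp_1^*\Ncal$. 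Properness of $f$ then does exactly what you say. In short: right decomposition, right use of properness, but the crucial finiteness is drawn from the wrong source, and without the regularity input the argument cannot close.
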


In the setting of the proposition there is a "direct image under closed embedding" functor
\[\Kcal_{X,Y}^{\Coh} \to \Dcal^b \Coh_{X \times_Y X}(X \times X)\]
This functor is monoidal as remarked in \cite[Section 4.1]{MR3}. The generators of the braid group action from theorem \ref{BRBraidAct} are all schemes on $\gtil \times_\g \gtil$ so they can naturally be considered as objects in $\Kcal_{\gtil,\g}^{\Coh}$.

\begin{thm}\label{DGBraidAct}
There is a (weak) categorical action of $B_\af$ on $\Kcal_{\gtil,\g}^{\Coh}$.
\end{thm}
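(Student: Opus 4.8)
The plan is to transport the braid action of Theorem \ref{BRBraidAct} from the affine Hecke category to $\Kcal^{\Coh}_{\gtil,\g}$ along the monoidal comparison functor, and to realize it as an action by convolution of the monoidal category on itself. First I would invoke the Remark: since $X=\gtil$, $Y=\g$ and $f$ the Grothendieck--Springer resolution satisfy the Tor-vanishing condition, the derived fibre product collapses to the ordinary one, $\gtil\times_\g^R\gtil \iso \gtil\times_\g\gtil$, so that $\Kcal^{\Coh}_{\gtil,\g}\iso \Dcal^b\Coh^{G\times\mathbb{G}_m}(\gtil\times_\g\gtil)$. The direct image functor along the closed embedding $\gtil\times_\g\gtil \hookrightarrow \gtil\times\gtil$ then identifies $\Kcal^{\Coh}_{\gtil,\g}$ with the full subcategory $\Dcal^b\Coh^{G\times\mathbb{G}_m}_{\gtil\times_\g\gtil}(\gtil\times\gtil)$, i.e. with the ($G\times\mathbb{G}_m$-equivariant) affine Hecke category, and this identification is monoidal by the remark attributed to \cite{MR3}.

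Next I would fix the objects realizing the generators. Left convolution makes the monoidal category $(\Kcal^{\Coh}_{\gtil,\g},*)$ act on itself, so it suffices to let $T_{\alpha_i}$ act by $\Ocal_{Z_{s_{\alpha_i}}}*(-)$ and $\theta_x$ by $\Ocal_{\Delta(\gtil)}(x)*(-)$; both objects lie in $\Kcal^{\Coh}_{\gtil,\g}$ because they are coherent sheaves supported on $\gtil\times_\g\gtil$. To guarantee that these functors are autoequivalences I must check that the generating objects are invertible for $*$. For $\Ocal_{\Delta(\gtil)}(x)$ this is immediate: $\Ocal_{\Delta(\gtil)}=\Ocal_{\Delta(\gtil)}(0)$ is the monoidal unit and $\Ocal_{\Delta(\gtil)}(x)*\Ocal_{\Delta(\gtil)}(-x)\iso \Ocal_{\Delta(\gtil)}$ by relation (2). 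For $\Ocal_{Z_{s_{\alpha_i}}}$ I would transport from \cite{BR} the inverse object, namely the kernel of the inverse braid generator, invertibility being preserved under the monoidal equivalence of the previous step.

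It then remains to verify the relations (1)--(4) of the presentation up to isomorphism of the corresponding composite autoequivalences; since the sought action is only weak, no higher coherence is needed. Under the monoidal identification each relation becomes an isomorphism of convolutions in the affine Hecke category, and all of these are already established by Bezrukavnikov and Riche: relation (1) is the statement $\Ocal_{Z_{s_{\alpha_{i_1}}}}*\cdots*\Ocal_{Z_{s_{\alpha_{i_n}}}}\iso \Ocal_{Z_w}$ applied to the two reduced expressions of the longest element $w$ of the rank-two parabolic attached to $(\alpha,\beta)$, while relations (2)--(4) express the multiplicativity of the twists $\Ocal_{\Delta(\gtil)}(x)$ and their interaction with the $\Ocal_{Z_{s_\alpha}}$, again as proved in \cite{BR}. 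Assembling these isomorphisms gives the required group homomorphism from $B_\af$ to isomorphism classes of autoequivalences of $\Kcal^{\Coh}_{\gtil,\g}$.

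The main obstacle is the comparison of the two convolution products. Theorem \ref{BRBraidAct} is phrased with the ordinary derived convolution on $\Dcal^b\Coh^G(\gtil\times\gtil)$, whereas the product on $\Kcal^{\Coh}_{\gtil,\g}$ is defined through derived tensor products over $\Acal_{\gtil}$ on the DG triple fibre products $Z_{ij}$. The heart of the argument is therefore to show that the direct image functor intertwines these two products $G\times\mathbb{G}_m$-equivariantly, which rests precisely on the Tor-vanishing that collapses the DG fibre product to the ordinary scheme-theoretic one and on the correct bookkeeping of the $\mathbb{G}_m$-weights and homological shifts. Once this monoidal compatibility is in place, the braid relations and the invertibility transport formally and the theorem follows.
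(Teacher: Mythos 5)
Your first step (invoking the Remark to collapse $\gtil \times_\g^R \gtil$ to the ordinary fibre product) is fine and matches the paper. The fatal problem is your second step. The direct image functor along the closed embedding $i : \gtil \times_\g \gtil \hookrightarrow \gtil \times \gtil$ is monoidal (this is the remark attributed to \cite{MR3}), but it is \emph{not} an equivalence onto $\Dcal^b \Coh^{G \times \mathbb{G}_m}_{\gtil \times_\g \gtil}(\gtil \times \gtil)$, and the paper never claims it is. For a closed embedding, $i_*$ is neither fully faithful at the derived level (Hom's in the target are controlled by $Li^* i_*$, which differs from the identity by the Koszul complex of the conormal bundle) nor essentially surjective onto the support category: an object such as $\Ocal_{\gtil \times \gtil}/\Ical^2$, where $\Ical$ is the ideal sheaf of $\gtil \times_\g \gtil$, is topologically supported on $\gtil \times_\g \gtil$ but the ideal acts on it nilpotently and nontrivially, so it is not $i_*$ of any complex of sheaves on the fibre product. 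Hence there is no monoidal identification of $\Kcal^{\Coh}_{\gtil,\g}$ with the affine Hecke category along which anything can be transported.

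This gap cannot be patched by rearranging your argument, because all your verifications of relations (1)--(4) take place downstairs: from an isomorphism $i_*(\Fcal) * i_*(\Gcal) \iso i_*(\Hcal)$ in the Hecke category, supplied by Theorem \ref{BRBraidAct} and the composition rule for the $\Ocal_{Z_w}$, you cannot conclude $\Fcal * \Gcal \iso \Hcal$ in $\Kcal^{\Coh}_{\gtil,\g}$, since a monoidal functor that is not fully faithful does not reflect isomorphism classes of objects. The same issue affects the $\mathbb{G}_m$-equivariance: the generators of Theorem \ref{BRBraidAct} live in the $G$-equivariant category, and relations there do not formally imply relations after the $\mathbb{G}_m$-structure is added. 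In other words, the dg-level statement is a strictly stronger refinement of Theorem \ref{BRBraidAct}, not a formal consequence of it. This is precisely why the paper does not argue as you do: it invokes Sections 4 and 5 of \cite{BR}, where Bezrukavnikov and Riche re-establish the braid relations (and invertibility of the generators) directly in the kernel category $\Kcal_{\gtil,\g}$, i.e. among complexes on the (dg, here ordinary by Tor-vanishing) fibre product itself, with the $\mathbb{G}_m$-equivariance built in. Your closing paragraph correctly identifies the comparison of the two convolutions as the heart of the matter, but its resolution is not weight bookkeeping along $i_*$; it is the upstairs verification carried out in the cited sections of \cite{BR}.
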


\section{Matrix factorizations}

Let $X$ be a separated, Noetherian $G$-scheme with enough $G$-equivariant vector bundlesbundles, i.e. every coherent sheaf on X is the quotient sheaf of a $G$-equivariant locally free sheaf of finite rank.  Let $w : X \to k$ be a regular $G$-invariant function.

\begin{defin}
\begin{enumerate}
\item A matrix factorization with potential $w$ is a quadruple 
\[(\Mcal^{-1}, \Mcal^0, d_{-1},d_0),\] 
where $\Mcal^{-1}$ and $\Mcal^0$ are coherent sheaves on $X$ with morphisms $d_{-1} : \Mcal^{-1} \to \Mcal^0$ and $d_0: \Mcal^0 \to \Mcal^{-1}$ satisfying that $d_0 \circ d_{-1}$ and $d_{-1} \circ d_0$ are multiplication by $w \in \Ocal(X)$.
\item A matrix factorization is $G$-equivariant if the sheaves $\Mcal^{-1}$ and $\Mcal^0$ are $G$-equivariant and the morphisms $d_{-1},$ and $d_0$ are $G$-equivariant.
\end{enumerate}
\end{defin}

The category of matrix factorizations on $X$ with potential $w$ has a DG-category structure with Hom complex
\[ \Hom^n(\Mcal, \Ncal):=\Hom_{\Coh^G(X)}(\Mcal^{-1}, \Ncal^{-1+n \text{ mod } 2}) \oplus \Hom_{\Coh^G(X)}(\Mcal^{0}, \Ncal^{n \text{ mod } 2}), \]
with differential
\[ d(f)_i(m):=d_{N}(f_i(m))-(-1)^{i} f_i(d_{M}(m)). \]
Indeed,
\begin{align}
d^2(f)_i(m) &=d^2_N f_i(m)-(-1)^{i+1} d_N f_i d_M(m)-(-1)^{i} d_N f_i d_M(m)-f_i d_M^2(m)\\
&=wf(m)-f(wm)\\
&=0.
\end{align}
We denote the homotopy category by $H^0(\Coh^G(X),w)$. The corresponding category where we only require the terms in the matrix factorization to lie in $\QCoh^G(X)$ is denoted by $H^0(\QCoh^G(X),w)$. The DG-categories allow shifts and cones so the homotopy categories are triangulated. Let 
\[\Mcal^\bullet=\dots \to \Mcal_i \stackrel{g_i}{\to} \Mcal_{i+1} \to \dots\]
be a complex $(g_i \circ g_{i-1}=0)$ of matrix factorizations with potential $w$. The total object of $\Mcal^\bullet$ is the matrix factorization
\begin{gather}
\text{Tot}(\Mcal^\bullet)=\Bigl(\bigoplus_{2i} \Mcal_{2i}^{-1} \oplus \bigoplus_{2i+1} \Mcal_{2i+1}^0, \bigoplus_{2i} \Mcal_{2i}^0 \oplus \bigoplus_{2i+1} \Mcal_{2i+1}^{-1}, d_{-1}, d_0\Bigr)\\
d=\sum_n d_{\Mcal_n} +(-1)^{\text{vertical degree}} g_n
\end{gather}
There is a notion of exotic derived category due to Positselski
\begin{defin}
\begin{enumerate}
\item Objects in $H^0(\Coh^G(X),w)$ are called equivariant absolutely acyclic if they belong to the minimal thick subcategory of $H^0(\Coh^G(X),w)$ containing all total complexes of short exact sequences in $(\Coh^G(X),w)$.
\item The equivariant absolute derived category $\Dabs(\Coh^G(X),w)$ is the quotient category of $H^0(\Coh^G(X),w)$ by the thick subcategory of equivariant absolutely acyclic matrix factorizations.
\item The equivariant absolute derived category $\Dabs(\QCoh^G(X),w)$ is the quotient category of $H^0(\QCoh^G(X),w)$ by the minimal thick subcategory containing all total complexes of short exact sequences in $(\QCoh^G(X),w)$.
\item Let $\Ecal$ be an exact subcategory in $\QCoh^G(X)$. Denote the full subcategory of $(\QCoh^G(X),w)$ whose objects have terms in $\Ecal$ by $(\QCoh^G(X)_\Ecal,w)$. The relative absolute derived category $\Dabs(\QCoh^G(X)_\Ecal,w)$ is the quotient of $H^0(\QCoh^G(X)_\Ecal,w)$ by the minimal thick subcategory containing the total objects of all short exact sequences whose terms are in $\Ecal$. The category $\Dabs(\Coh^G(X)_\Ecal,w)$ is defined similarly.
\end{enumerate}
\end{defin}

\begin{lemma}\cite[Rem. 1.3]{EP} \label{local}
Absolute acyclicity is a local notion, i.e. to check that $\Mcal$ is acyclic it is enough to show that $\Mcal$ restricted to each $U_\alpha$ is acyclic, where $\{U_\alpha\}$ is a finite affine open cover of $X$.
\end{lemma}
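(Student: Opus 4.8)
The statement comprises two implications, one routine and one substantive. If $\Mcal$ is globally absolutely acyclic, then its restriction to each $U_\alpha$ is too: restriction $j_\alpha^*$ along the open immersion $j_\alpha : U_\alpha \hookrightarrow X$ is exact, carries matrix factorizations with coherent terms to matrix factorizations with coherent terms, and commutes with the formation of total objects. It therefore sends the generating objects of the absolutely acyclic subcategory (total complexes of short exact sequences) to absolutely acyclic objects, and being triangulated it preserves the whole thick subcategory. The content of the lemma is the converse, which I would prove by a \v{C}ech argument combined with the defining property of the absolutely acyclic subcategory.

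The plan is to induct on the number of opens in the cover, reducing to a Mayer--Vietoris situation $X = U \cup V$ with $U$, $V$ affine (equivalently, one runs the full augmented \v{C}ech complex at once). Since $X$ is separated, $U \cap V$ is again affine and the inclusions $j_U, j_V, j_{U \cap V}$ are affine morphisms, so the push-forwards $j_{U*}, j_{V*}, j_{U\cap V *}$ are exact. Hence the sheafified Mayer--Vietoris sequence
\[ 0 \to \Mcal \to j_{U*}(\Mcal|_U) \oplus j_{V*}(\Mcal|_V) \to j_{U \cap V *}(\Mcal|_{U \cap V}) \to 0 \]
is a short exact sequence of matrix factorizations with potential $w$ on $X$ (the underlying sheaf-level \v{C}ech complex is always a resolution). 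By definition its total object is absolutely acyclic, so in $\Dabs(\Coh^G(X),w)$ there is a distinguished triangle relating $\Mcal$ to the two push-forward terms. Now $\Mcal|_U$ and $\Mcal|_V$ are absolutely acyclic by hypothesis, and $\Mcal|_{U \cap V} = (\Mcal|_U)|_{U \cap V}$ is absolutely acyclic by the easy direction already established; exactness of the affine push-forwards then shows that $j_{U*}(\Mcal|_U)$, $j_{V*}(\Mcal|_V)$ and $j_{U \cap V *}(\Mcal|_{U \cap V})$ are absolutely acyclic. Two of the three vertices of the triangle being zero in $\Dabs$, the third, $\Mcal$, is zero as well, i.e. absolutely acyclic.

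The main obstacle is coherence: for an open immersion $j_U : U \hookrightarrow X$ the push-forward $j_{U*}(\Mcal|_U)$ need not have coherent terms on $X$, so the \v{C}ech/Mayer--Vietoris complex above does not a priori live in the coherent category. I would circumvent this by first carrying out the entire argument in the quasi-coherent absolute derived category $\Dabs(\QCoh^G(X),w)$, where push-forward along affine morphisms preserves quasi-coherence and exactness and the argument runs without modification, and then descending to the coherent statement via the comparison between coherent and quasi-coherent absolute acyclicity on the Noetherian scheme $X$ provided by \cite{EP}. This reduction is the only genuinely delicate point; once it is granted, locality follows formally from exactness of the \v{C}ech functors and the fact that the total object of the augmented \v{C}ech complex is, by construction, a generator of the absolutely acyclic subcategory.
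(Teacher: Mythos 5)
The paper offers no proof of this lemma at all --- it is quoted from \cite{EP}, Remark 1.3 --- so there is nothing internal to compare against; your \v{C}ech argument is essentially the standard argument behind that remark, and its skeleton is sound. The easy direction (restriction is exact, preserves totalizations, hence preserves the thick subcategory of absolutely acyclic objects), the main mechanism (the augmented \v{C}ech complex is exact, its terms other than $\Mcal$ are push-forwards along affine open immersions, which are exact functors preserving totalizations and hence absolute acyclicity, so a triangle argument kills $\Mcal$), and your identification of the coherence problem together with the fix --- work in $\Dabs(\QCoh^G(X),w)$ and descend along the fully faithful embedding $\Dabs(\Coh^G(X),w) \to \Dabs(\QCoh^G(X),w)$, which is exactly Proposition \ref{FullyFaithful} of the paper --- are all correct moves.

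Three points need tightening. First, the induction on the number of opens does not literally run: at the inductive step $V = U_2 \cup \dots \cup U_n$ is not affine, so $j_{V*}$ is not exact and there is no reason for it to preserve absolute acyclicity. You must instead use the full augmented \v{C}ech complex (your parenthetical alternative), whose terms are push-forwards from finite intersections of the $U_\alpha$, all affine because $X$ is separated. This costs an extra standard lemma that your phrase ``by construction, a generator'' glosses over: for more than two opens the totalization of the \v{C}ech complex is not a generator of the absolutely acyclic subcategory in the sense of the definition (those are totalizations of \emph{short} exact sequences); one must check that the total object of any finite exact complex is absolutely acyclic, by splitting it into short exact sequences. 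Second, Proposition \ref{FullyFaithful} is stated for smooth $X$, whereas the lemma is asserted in the generality of Section 3 (separated Noetherian with enough equivariant vector bundles); your proof therefore yields the lemma in slightly smaller generality --- sufficient for every use made of it in the paper, but the descent step is, as you yourself note, exactly where the generality is lost. Third, equivariance: the \v{C}ech complex consists of $G$-equivariant matrix factorizations only if the $U_\alpha$ are $G$-stable, which affine opens rarely are; for a general affine cover one must either read the restrictions non-equivariantly and know that the forgetful functor to the non-equivariant absolute derived category reflects acyclicity (plausible for reductive $G$ in characteristic zero via exactness of $G$-invariants, but it is an extra argument), or arrange a $G$-stable cover. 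Neither your proof nor the paper addresses this, so you are in good company, but for the equivariant statement as used in the paper it is a genuine wrinkle.
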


\begin{prop}  \label{FullyFaithful}
Let $G$ be a reductive algebraic group acting on a smooth scheme $X$. Then the functor $\Dabs(\Coh^G(X),w) \to \Dabs(\QCoh^G(X),w)$ induced by the inclusion $(\Coh^G(X),w) \hookrightarrow (\QCoh^G(X),w)$ is fully faithful.
\end{prop}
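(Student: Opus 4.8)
The plan is to reduce the statement to a comparison of absolutely acyclic objects together with a single approximation argument, via the Verdier localization criterion. Write $\mathsf{C} = H^0(\Coh^G(X),w)$ and $\mathsf{D} = H^0(\QCoh^G(X),w)$ for the two homotopy categories, and let $\mathsf{A}' \subseteq \mathsf{C}$ and $\mathsf{A} \subseteq \mathsf{D}$ be the thick subcategories of absolutely acyclic objects. Since between coherent matrix factorizations the Hom complexes computed in $\Coh^G(X)$ and in $\QCoh^G(X)$ coincide, the inclusion $\mathsf{C} \hookrightarrow \mathsf{D}$ is a full triangulated embedding carrying $\mathsf{A}'$ into $\mathsf{A}$, and it induces the functor in question on the Verdier quotients $\Dabs(\Coh^G(X),w) = \mathsf{C}/\mathsf{A}'$ and $\Dabs(\QCoh^G(X),w) = \mathsf{D}/\mathsf{A}$. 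By the standard criterion for such an induced functor to be fully faithful it suffices to establish: (I) a matrix factorization with coherent terms that is absolutely acyclic in $\QCoh^G(X)$ is already absolutely acyclic in $\Coh^G(X)$, i.e. $\mathsf{A} \cap \mathsf{C} = \mathsf{A}'$; and (II) every morphism in $\mathsf{D}$ from a coherent matrix factorization $\Mcal$ to an object $\Acal \in \mathsf{A}$ factors, up to homotopy, through an object of $\mathsf{A}'$.

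For (I) I would invoke Lemma~\ref{local}: absolute acyclicity is local, so it is enough to treat $X$ affine, where $\Acal$ lives over a regular Noetherian ring and the identification of coherent absolutely acyclic objects with those that become absolutely acyclic in $\QCoh$ is the affine comparison result of Efimov--Positselski. Restriction to a finite affine cover preserves both the coherence of the terms and absolute acyclicity in $\QCoh$, and the locality lemma then glues the affine conclusions into the statement that $\Acal$ is absolutely acyclic in $\Coh^G(X)$. The reductivity of $G$ in characteristic zero enters here only to make the formation of $G$-invariants exact, so that equivariant short exact sequences and equivariant coherent approximations behave as in the non-equivariant setting.

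The crux of the argument is (II), and the key input is the finite presentation of coherent sheaves on the Noetherian scheme $X$. Every $G$-equivariant quasi-coherent sheaf is the filtered union of its $G$-equivariant coherent subsheaves; applying this termwise and compatibly with the differentials, any object of $\mathsf{A}$ can be exhibited as a termwise filtered colimit $\Acal = \varinjlim_i \Acal_i$ of coherent sub-matrix-factorizations. Unwinding the definition of $\mathsf{A}$ as the thick closure of totalizations of short exact sequences, I would arrange these $\Acal_i$ to be absolutely acyclic in $\Coh^G(X)$: a short exact sequence of quasi-coherent matrix factorizations is the filtered union of short exact sequences of coherent sub-matrix-factorizations (take a coherent exhaustion of the middle term, then intersect and project to obtain the outer terms), and totalization commutes with these filtered colimits; the cone and direct-summand operations that generate the thick closure are then carried through the colimit, writing $\Acal$ as a summand of a filtered colimit of coherent absolutely acyclic objects. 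Because $\Mcal$ has finitely presented terms, $\Hom_{\mathsf{D}}(\Mcal,-)$ commutes with such filtered colimits on both chain maps and homotopies, so the morphism $\Mcal \to \Acal$ factors through a finite stage, which is an object of $\mathsf{A}'$; this is exactly the factorization required in (II).

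The step I expect to be the main obstacle is the coherent approximation of the \emph{witness} of absolute acyclicity in (II): it is easy to approximate $\Acal$ itself by coherent subobjects, but one must ensure that the finite stages inherit genuine absolute acyclicity in the coherent sense rather than merely being coherent. This is what forces the careful dévissage through the generating short exact sequences and the cone and summand operations, and it is where the finite presentation (compactness) of coherent matrix factorizations and the exactness of filtered colimits in $\QCoh^G(X)$ are used essentially; the locality Lemma~\ref{local} then serves as the bridge that lets the affine input of (I), and any affine-local constructions needed in (II), be glued into the global equivariant statement.
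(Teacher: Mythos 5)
Your proposal is correct in substance, and it is worth saying up front that the paper itself offers no argument: its entire proof of Proposition \ref{FullyFaithful} is the citation \cite[Prop.~1.5(c)]{EP} for the non-equivariant statement plus the sentence that the proof extends to the equivariant setting. What you have written is a self-contained reconstruction, in the Positselski style, of the mechanism that citation hides: the Verdier-localization criterion reduces full faithfulness of $\mathsf{C}/\mathsf{A}' \to \mathsf{D}/\mathsf{A}$ to the factorization property (II), and (II) is proved by d\'evissage --- every absolutely acyclic quasi-coherent matrix factorization is a retract, in the homotopy category, of a termwise filtered union of coherent absolutely acyclic sub-factorizations, and coherent factorizations are compact against such unions on a Noetherian $G$-scheme, both on chain maps and on homotopies. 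Your identification of the delicate point is accurate: the stages of the exhaustion must be absolutely acyclic in $\Coh^G(X)$, not merely coherent, which forces the induction through totalizations of short exact sequences, cones (where one needs the re-indexing over the filtered poset of pairs of stages, since a chain map from a coherent sub-factorization lands in some finite stage of the target exhaustion), and retracts (where the conclusion weakens to ``summand of a nice colimit,'' which is still enough to factor $\Mcal \to \Acal$ through a finite stage by composing with the splitting). This is presumably the argument inside \cite{EP}, so the route agrees with the one the paper delegates to; what your version buys is that it makes explicit where equivariance enters (equivariant coherent exhaustion, invariants commuting with filtered colimits --- facts valid for any affine algebraic group over $k$) rather than asserting that the proof extends, and it shows that smoothness of $X$ plays no essential role in this particular proposition.

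One genuine defect, though a harmless one: your proof of (I) does not work as written in the equivariant setting. Lemma \ref{local} reduces to a finite affine cover whose members are not $G$-stable, so gluing the affine conclusions only shows that the \emph{underlying} non-equivariant factorization is absolutely acyclic in $\Coh(X)$; the return trip to absolute acyclicity in $\Coh^G(X)$ is not supplied by ``exactness of invariants'' --- the averaging adjoint along $G \times X \to X$ does not preserve coherence when $\dim G > 0$, so this implication is not formal. Fortunately (I) is redundant: (II) alone yields both fullness and faithfulness in the localization criterion, and (I) is a consequence of (II) --- apply the factorization to the identity of $\Ccal \in \mathsf{A} \cap \mathsf{C}$ to write $\id_\Ccal$ through some $\Bcal \in \mathsf{A}'$, note that both arrows lie in $\mathsf{C}$ by fullness of $\mathsf{C} \subset \mathsf{D}$, so $\Ccal$ is a direct summand of $\Bcal$ in $\mathsf{C}$ and hence lies in $\mathsf{A}'$ by thickness. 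I recommend deleting your affine-reduction argument for (I) and deriving (I) this way instead.
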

\begin{proof}
Without equivariance this is \cite[Prop. 1.5(c)]{EP}. The proof extends to the equivariant setting.
\end{proof}

\section{Braid group action on matrix factorizations coming from Hamiltonian actions} \label{BraidActMatFact}

Let $G$ be a reductive algebraic group over an algebraically closed field of characteristic zero and $X$ a smooth $G$-scheme. Then we have a Hamiltonian action of $G$ on $T^*X$ with moment map $\mu : T^*X \to \g^*$ which is $G$-equivariant with the $G$-action on $\g^*$ being the coadjoint action. Combining this with the Grothendieck-Springer resolution we get a function
\[ w: \gtil \times T^*X \stackrel{\nu \times \mu}{\longrightarrow} \g \times \g^* \stackrel{\langle \:,\: \rangle}{\longrightarrow} \C.   \]
We check that this function is $G$-invariant
\begin{align}
w(g\cdot x,g\cdot y,g\cdot z)&=\langle \Ad(g)x, \mu(g \cdot z) \rangle=\langle \Ad(g)x, g\cdot \mu(z) \rangle\\
&=\langle \Ad(g^{-1})\Ad(g)x,\mu(z) \rangle=\langle x,\mu(z) \rangle\\
&=w(x,y,z).
\end{align}
Thus, this defines a potential for equivariant matrix factorizations on $\gtil \times T^*X$. The theorem we are going to prove is the following.

\begin{thm}[Main theorem] \label{MainThm}
There is a $B_\af$-action on $\Dabs(\Coh^G(\gtil \times T^*X),w)$.
\end{thm}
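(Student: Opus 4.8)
The plan is to transport the Bezrukavnikov–Riche braid group action (Theorem~\ref{DGBraidAct}) across the constructions announced for sections 5–7 of the paper. The strategy is to realize $\Dabs(\Coh^G(\gtil \times T^*X),w)$ as a module category over the affine Hecke category, so that the $B_\af$-action on the latter produces a $B_\af$-action on the former by convolution.

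\textbf{Setup of the convolution machinery.} First I would instantiate the general framework of section 6 with the specific data of the present situation. I take $Y := \gtil$ and $X := T^*X$ (the original smooth $G$-scheme's cotangent bundle), and let $V := \g$ so that $V^* = \g^*$. The equivariant maps are $\nu : \gtil \to \g$, the Grothendieck–Springer resolution, and $\mu : T^*X \to \g^*$, the moment map. Unpacking the definitions, the function $w(y,x) = \mu(x)(\nu(y)) = \langle \nu(y), \mu(x) \rangle$ is precisely the potential defined in section~\ref{BraidActMatFact}, and the function $h(y_1,y_2,v) = v(\nu(y_1)-\nu(y_2))$ on $\gtil \times \gtil \times \g^*$ is the potential for the monoidal category. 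By section~6, the category $\Dabs(\QCoh^G(\gtil \times \gtil \times \g^*),h)$ carries a monoidal structure and acts monoidally on $\Dabs(\QCoh^G(\gtil \times T^*X),w)$; moreover, imposing the appropriate support condition yields a monoidal action on $\Dabs(\Coh^G(\gtil \times T^*X),w)$, which is our target category.

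\textbf{Transporting the generators.} The heart of the proof is the construction, promised in section~7, of a monoidal functor
\[
\Phi : \Kcal_{\gtil,\g}^{\Coh} \longrightarrow \Dabs(\QCoh^G(\gtil \times \gtil \times \g^*),h)
\]
from (a subcategory containing the generators of) the Bezrukavnikov–Riche monoidal category. Geometrically, one wants to send a $G$-equivariant coherent sheaf supported on $\gtil \times_\g \gtil$ to a matrix factorization on $\gtil \times \gtil \times \g^*$ with potential $h$; the Koszul-type mechanism is that a sheaf set-theoretically supported on the locus $\{\nu(y_1) = \nu(y_2)\}$ is exactly the locus where the potential $h$ degenerates, and a standard Koszul duality / "cokernel" construction converts such sheaves into matrix factorizations. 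I would then verify that the images $\Phi(\Ocal_{Z_{s_{\alpha_i}}})$ and $\Phi(\Ocal_{\Delta(\gtil)}(x))$ of the braid generators satisfy the hypotheses of the support condition, so that convolution with them preserves the coherent subcategory $\Dabs(\Coh^G(\gtil \times T^*X),w)$.

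\textbf{Conclusion and the main obstacle.} Since $\Phi$ is monoidal, the affine braid group relations satisfied by the generators $\Ocal_{Z_{s_{\alpha_i}}}$ and $\Ocal_{\Delta(\gtil)}(x)$ in $\Kcal_{\gtil,\g}^{\Coh}$ (Theorem~\ref{DGBraidAct}) are transported to relations among their images, and hence to relations among the convolution auto-equivalences they induce on $\Dabs(\Coh^G(\gtil \times T^*X),w)$. As the action we construct is only a \emph{weak} action, it suffices to check these relations up to isomorphism, which is exactly what monoidality of $\Phi$ guarantees. The main obstacle I anticipate is the careful construction and verification of the functor $\Phi$: one must check that it is genuinely monoidal, i.e. that it intertwines the convolution product $*$ on $\Kcal_{\gtil,\g}^{\Coh}$ with the convolution of section~6, and this requires matching the derived fiber-product / $\otimes^L$ formalism on the coherent side with the matrix-factorization convolution and its potential on the other side. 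A subtle point is that the Koszul construction sends derived tensor products over $\g$ to the correct tensor operation of matrix factorizations, and that the support/properness conditions ensuring $\Dabs(\Coh^G)$ is preserved are compatible with those defining $\Kcal_{\gtil,\g}^{\Coh}$; verifying this compatibility on the generators, rather than on the whole category, is where the real work lies.
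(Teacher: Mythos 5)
Your proposal is correct and is essentially the paper's own proof: instantiate the section-6 convolution machinery with $Y=\gtil$, $V=\g$, $X=T^*X$, push the Bezrukavnikov--Riche generators $\Ocal_{Z_{s_{\alpha_i}}}$ and $\Ocal_{\Delta(\gtil)}(x)$ through the monoidal Koszul duality functor of section 7 (they qualify because they are sheaves concentrated in a single degree and supported on $\gtil \times_\g \gtil$, so they lie in $D_{\Perf,\gtil \times_\g \gtil}(\Ocal_{\gtil\times\gtil}\otimes\Lambda(\g^*)\text{-}\module^G)$ and their images land in $\Dabs_{\gtil \times_\g \gtil \times \g^*}(\Coh^G(\gtil\times\gtil\times\g^*),h)$), and use monoidality plus the support condition to transport the braid relations to convolution auto-equivalences of $\Dabs(\Coh^G(\gtil \times T^*X),w)$. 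The only cosmetic difference is that you first write the functor on all of $\Kcal_{\gtil,\g}^{\Coh}$ before restricting, while the paper constructs it only on the perfect, properly-supported subcategory --- a restriction you yourself flag as necessary.
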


The idea of the proof is to construct a monoidal category of matrix factorizations and a monoidal action of this category on $\Dabs(\Coh^G(\gtil \times T^*X),w)$. Then we construct a monoidal functor from a subcategory of the category from Bezrukavnikov and Riche's theorem \ref{DGBraidAct}, containing the generators of the braid group action, to this monoidal category.

Under the isomorphism $\frac{G \times \bfr}{B} \isomap \gtil$ given by $(g,x) \mapsto (\Ad(g)x, gB/B)$ the Grothendieck-Springer resolution becomes the map $(g,x) \mapsto \Ad(g)x$. Thus, we have an equivalence
\[ \Dabs(\Coh^G(\gtil \times T^*X),w)\iso \Dabs(\Coh^{G \times B}(G \times \bfr \times T^*X),w'), \]
where $w'$ is the morphism $(g,x) \mapsto \Ad(g)x$. The $G$-action on $\frac{G \times \bfr}{B}$ is multiplication on the $G$-factor. We also have the equivalence
\[ \Dabs(\Coh^{G \times B}(G \times \bfr \times T^*X),w')\iso \Dabs\Bigl(\Coh^{B}\Bigl(\frac{G \times \bfr \times T^*X}{G}\Bigr),\bar{w'}\Bigr)\]
Under the isomorphism $\frac{G \times \bfr \times T^*X}{G}\iso \bfr \times T^*X$ the potential $\bar{w'}$ becomes the map
\[ \bfr \times T^*X \to \C, \qquad (b,x) \mapsto \langle b , \mu(x) \rangle. \]
Thus, the theorem gives a $B_\af$-action on the equivalent categories
\begin{align}
 \Dabs(\Coh^G(\gtil \times T^*X),w)&\iso \Dabs(\Coh^B(\bfr \times T^*X),\bar{w'}) \\
& \iso \Coh^{L_{\text{top}}(B)}(L_{\text{top}}(X)).
\end{align}

\section{Functors on matrix factorizations}
Let $\Mcal, \Ncal \in (\QCoh^G(X),w)$. The formula for tensor products of complexes gives a tensor product on matrix factorizations
\begin{gather} 
\Mcal \otimes_X \Ncal:=(\Mcal^{-1} \otimes_X \Ncal^0 \oplus \Mcal^0 \otimes_X \Ncal^{-1}, \Mcal^{-1} \otimes_X \Ncal^{-1} \oplus \Mcal^0 \otimes_X \Ncal^0, d_{-1},d_0)\\
d_i(m \otimes n):=d_i^\Mcal(m) \otimes b +(-1)^i a \otimes d_i^\Ncal(b)
\end{gather}
A matrix factorization is $\Mcal \in (\QCoh^G(X),w)$ is flat if the functor $- \otimes_X \Mcal$ is exact on the Abelian category of matrix factorizations. This is the case if the terms in $\Mcal$ are flat over $\Ocal_X$. The full subcategory of flat (resp. locally free) matrix factorizations is denoted by $(\QCoh^G(X)_{\text{fl}},w)$ (resp. $(\QCoh^G(X)_{\text{lf}},w)$). Since $\otimes_X$ takes short exact sequences in $(\QCoh^G(X)_\fl,w)$ to short exact sequences it takes absolutely acyclic modules to absolutely acyclic modules. Hence, it induces a functor
\[ \otimes^L_{X} :\Dabs(\QCoh^G(X),w_1) \times \Dabs(\QCoh^G(X)_\fl,w_2) \to \Dabs(\QCoh^G(X),w_1+w_2). \]
Let $f:X \to Y$ be a $G$-equivariant morphism. Then we can define pull-back.
\begin{gather}
f^* : (\QCoh^G(Y),w \circ f) \to (\QCoh^G(X,w)),\\
\Mcal \mapsto (f^*\Mcal^{-1}, f^* \Mcal^0, f^* d_{-1},f^* d_0).
\end{gather}
By the same argument as above it induces a functor
\[ Lf^{*} : \Dabs(\QCoh^G(X)_\fl,w) \to \Dabs(\QCoh^G(X)_\fl, w \circ f). \]
Finally, we have push-forward
\begin{gather}
f_* : (\QCoh^G(X),w \circ f) \to (\QCoh^G(Y),w), \qquad\\
\Mcal \mapsto (f_* \Mcal^{-1}, f_* \Mcal^0,f_* d_{-1}, f_*d_0)
\end{gather}
We denote the full subcategory of $(\QCoh^G(X),w)$ whose terms are injective (resp. finite injective dimension) by $(\QCoh^G(X)_{\text{inj}},w)$ (resp. $(\QCoh^G(X)_{\text{fid}},w)$). Then we have a derived functor
\[ R f_* : \Dabs(\QCoh^G(Y)_{\text{inj}},w \circ f) \to \Dabs(\QCoh^G(X),w) \]

\begin{prop}\label{LocFreeInj}
Let $G$ be a reductive algebraic group acting on a smooth scheme $X$.
\begin{enumerate}
\item The natural functor 
\[H^0(\QCoh^G(X)_{\operatorname{inj}},h) \to \Dabs(\QCoh^G(X),h)\] 
is an equivalence of triangulated categories.
\item The natural functor 
\[\Dabs(\QCoh^G(X)_{\operatorname{lf}},w) \to \Dabs(\QCoh^G(X),w)\] 
is an equivalence of triangulated categories.
\end{enumerate}
\end{prop}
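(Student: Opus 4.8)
The plan is to prove both equivalences by the standard two-step scheme for localizations of this type: full faithfulness via a Hom-vanishing (orthogonality) statement, and essential surjectivity via finite resolutions. In the non-equivariant case these are exactly \cite[Prop. 1.5]{EP}, so the genuine content is to check that the arguments survive the passage to $\QCoh^G(X)$. Two global inputs make this possible. First, $\QCoh^G(X)$ is a Grothendieck abelian category and therefore has enough injectives. Second, because $X$ is smooth and $G$ is reductive in characteristic zero, $\QCoh^G(X)$ has finite homological dimension (bounded by $\dim X$), so every object admits both a finite injective coresolution and---using the assumed resolution property (enough $G$-equivariant vector bundles)---a finite locally free resolution.

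For part (1), the first step is to show that a matrix factorization $\Ical$ with injective terms is right-orthogonal to the absolutely acyclic objects, i.e. $\Hom_{H^0}(\Acal,\Ical)=0$ for every absolutely acyclic $\Acal$. Since $\Hom_{H^0}(-,\Ical)$ is cohomological it suffices to treat the generators, where $\Acal$ is the total object of a short exact sequence $0\to\Acal_1\to\Acal_2\to\Acal_3\to 0$. As the terms $\Ical^i$ are injective, each $\Hom_{\QCoh^G(X)}(-,\Ical^i)$ is exact, so applying $\Hom^\bullet(-,\Ical)$ produces a short exact sequence of Hom-complexes whose total complex---which computes $\Hom^\bullet(\Acal,\Ical)$---is acyclic; in particular $\Hom_{H^0}(\Acal,\Ical)=0$. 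By the general Verdier mechanism, right-orthogonality makes the localization map $\Hom_{H^0}(\Mcal,\Ical)\to\Hom_{\Dabs}(\Mcal,\Ical)$ an isomorphism for every $\Mcal$, and specializing to injective $\Mcal$ gives full faithfulness of $H^0(\QCoh^G(X)_{\operatorname{inj}},h)\to\Dabs(\QCoh^G(X),h)$ (as a byproduct, an absolutely acyclic injective-term factorization is contractible). For essential surjectivity I would resolve: embed the two terms of $\Mcal$ into injective sheaves and extend to a closed embedding $\Mcal\hookrightarrow\Ical^0$ with $\Ical^0$ injective (via the right adjoint of the forgetful functor to $\Z/2$-graded sheaves), then iterate on the cokernel. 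Finite injective dimension forces termination, yielding a finite coresolution $0\to\Mcal\to\Ical^0\to\cdots\to\Ical^n\to 0$; the cone of $\Mcal\to\mathrm{Tot}(\Ical^\bullet)$ is the total complex of this bounded exact complex and hence absolutely acyclic, so $\Mcal\iso\mathrm{Tot}(\Ical^\bullet)$ in $\Dabs$.

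Part (2) is the dual statement, now with locally free terms. Essential surjectivity follows the same pattern: the resolution property together with finite homological dimension provides, for any $\Mcal$, a finite locally free resolution, whose total complex represents $\Mcal$ with locally free terms (the cone again being a total complex of a bounded exact complex, hence absolutely acyclic). Full faithfulness of $\Dabs(\QCoh^G(X)_{\operatorname{lf}},w)\to\Dabs(\QCoh^G(X),w)$ I would deduce from the corresponding resolution lemma of \cite{EP}: morphisms are computed after replacing both arguments by finite locally free representatives, and the comparison of the two absolute derived categories is an isomorphism because the discrepancy is always a total complex of a finite exact complex, which is absolutely acyclic and therefore invisible in either quotient.

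The main obstacle is the essential-surjectivity step, and specifically the verification that $\QCoh^G(X)$ really has finite homological dimension in the equivariant setting together with the construction of the requisite finite resolutions there. For injectives this rests on $\QCoh^G(X)$ being a regular Grothendieck category of dimension $\dim X$, with linear reductivity of $G$ in characteristic zero ensuring that equivariant injectives are controlled by their underlying sheaves. For locally free resolutions of arbitrary---possibly non-coherent---quasi-coherent sheaves there is an additional subtlety, since $\QCoh^G(X)$ need not have enough projectives; here I would first reduce to coherent sheaves, writing an arbitrary object as a filtered union of coherent subsheaves, before invoking the finite-length locally free resolutions guaranteed by smoothness and the resolution property. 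Once this finiteness is secured, the remaining matrix-factorization-specific input---that the total complex of a bounded exact complex is absolutely acyclic---is immediate from the definition of absolute acyclicity, and the equivariant bookkeeping in the orthogonality lemma is routine.
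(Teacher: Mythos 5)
Your proposal is correct and follows essentially the same route as the paper: the paper's own proof simply cites \cite[Lemma 1.7(a)]{EP} for (1) and \cite[Cor. 2.4(b)+rem.]{EP} for (2) and observes that those proofs extend to the equivariant setting, the key inputs being exactly the ones you isolate --- finite equivariant injective (resp. locally free) dimension coming from smoothness, with linear reductivity of $G$ making the equivariant bookkeeping harmless. Your orthogonality-plus-finite-coresolution argument for (1) and finite-resolution argument for (2) are precisely the content of the cited results of Efimov--Positselski, so you have unpacked, rather than replaced, the paper's proof.
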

\begin{proof}
(1) By \cite[Lemma 1.7(a)]{EP} there is an equivalence of triangulated categories induced by the inclusion
\[H^0(\QCoh(X)_{\operatorname{inj}},w) \iso \Dabs(\QCoh(X)_{\operatorname{fid}},w).\]
The proof extends to the equivariant setting. For smooth schemes all equivariant quasi-coherent sheaves have finite equivariant injective dimension so the result follows.

(2) \cite[Cor. 2.4(b)+rem.]{EP} states that when $X$ is a regular separated Noetherian scheme of finite Krull dimension then the natural functor $\Dabs(\QCoh(X)_\text{lf},w) \to \Dabs(\QCoh(X),w)$ is an equivalence of triangulated categories. The proof extends to the equivariant setting when $X$ has finite $G$-equivariant locally free dimension. This is satisfied when $X$ is smooth.
\end{proof}

\begin{cor}
Assume that $X$ and $Y$ are smooth and let $f : X \to Y$ be a $G$-equivariant morphism.
\begin{enumerate}
\item There is a tensor product
\[ \otimes_X^L : \Dabs(\QCoh^G(X),w_1) \times \Dabs(\QCoh^G(X), w_2) \to \Dabs(\QCoh^G(X), w_1 +w_2)\]
\item There is a pull-back
\[Lf^{*} : \Dabs(\QCoh^G(X),w) \to \Dabs(\QCoh^G(Y),w \circ f).\]
\item There is a push-forward
\[ R f_* : \Dabs(\QCoh^G(Y),w \circ f) \to \Dabs(\QCoh^G(X),w) \]
\end{enumerate}
\end{cor}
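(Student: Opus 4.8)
The plan is to deduce all three derived functors from the two equivalences established in Proposition \ref{LocFreeInj}, which let me replace the absolute derived category of all (quasi-coherent) matrix factorizations by the more tractable subcategories of locally free, respectively injective, objects. The key point is that each of the three operations was already defined at the level of these restricted subcategories in the discussion preceding the corollary, and the work now is merely to transport those functors across the equivalences.

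For part (1), I would argue as follows. The tensor product was constructed as a functor
\[ \otimes^L_X : \Dabs(\QCoh^G(X),w_1) \times \Dabs(\QCoh^G(X)_\fl,w_2) \to \Dabs(\QCoh^G(X),w_1+w_2), \]
where the second argument is required to be flat. Since locally free sheaves are flat, there is an inclusion $(\QCoh^G(X)_{\operatorname{lf}},w_2) \hookrightarrow (\QCoh^G(X)_\fl,w_2)$, so I may restrict the second argument to locally free matrix factorizations. By Proposition \ref{LocFreeInj}(2) the natural functor $\Dabs(\QCoh^G(X)_{\operatorname{lf}},w_2) \isomap \Dabs(\QCoh^G(X),w_2)$ is an equivalence because $X$ is smooth; composing with a chosen inverse of this equivalence yields the desired tensor product on the full category in both slots. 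Concretely, given an object of $\Dabs(\QCoh^G(X),w_2)$ I first replace it by a quasi-isomorphic locally free model and then apply the naive tensor product.

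For part (2), the pull-back $Lf^{*}$ was likewise defined only on the locally free (hence flat) subcategory, where the naive pull-back $f^*$ is well behaved and sends absolutely acyclic complexes of locally free objects to absolutely acyclic ones. Again invoking Proposition \ref{LocFreeInj}(2) on the smooth scheme $X$, I precompose with the inverse equivalence $\Dabs(\QCoh^G(X),w) \isomap \Dabs(\QCoh^G(X)_{\operatorname{lf}},w)$ to extend $Lf^{*}$ to the whole category. For part (3), I proceed dually using injective models: the derived push-forward $Rf_*$ was defined on $(\QCoh^G(X)_{\operatorname{inj}},w\circ f)$, and Proposition \ref{LocFreeInj}(1) says that $H^0(\QCoh^G(X)_{\operatorname{inj}},w\circ f) \isomap \Dabs(\QCoh^G(X),w\circ f)$ is an equivalence precisely because $X$ is smooth (all equivariant quasi-coherent sheaves then have finite equivariant injective dimension). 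Composing $Rf_*$ with the inverse of this equivalence gives the stated functor.

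I do not expect a serious obstacle here, since the statement is essentially a formal consequence of the two equivalences: smoothness of $X$ (and $Y$) is exactly what guarantees enough locally free and enough injective resolutions in the equivariant world, and everything reduces to choosing resolutions and checking that the naive operations preserve absolute acyclicity on the relevant subcategories — which was already recorded in the paragraphs preceding the corollary. The only point deserving care is well-definedness up to the equivalences, i.e.\ independence of the chosen locally free or injective model; this follows from the universal property of the Verdier quotient together with the fact that $\otimes_X$, $f^*$ and $f_*$ carry the total objects of short exact sequences (in the flat, resp.\ injective, subcategory) to absolutely acyclic objects, so each functor descends to the quotient and the composites assemble into well-defined functors on the nose.
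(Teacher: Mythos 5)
Your proposal is correct and coincides with the paper's (implicit) argument: the corollary is stated without proof precisely because it follows immediately from Proposition \ref{LocFreeInj}, by transporting the tensor product, pull-back and push-forward — already constructed on the flat/locally free and injective subcategories in the preceding discussion — across the two equivalences, with smoothness guaranteeing those equivalences hold. Your closing remark about well-definedness via descent to the Verdier quotient is exactly the point that makes this transport legitimate.
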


From now on we will always assume that our schemes are smooth.

\begin{prop}
Let $f : X \to Y$ be an equivariant morphism of smooth $G$-schemes. Then $Rf_*$ is right adjoint to $Lf^*$.
\end{prop}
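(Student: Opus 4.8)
The plan is to deduce the statement from the strict adjunction between the underived functors $f^{*}$ and $f_{*}$ on matrix factorizations, and then to evaluate the two derived functors on the adapted resolutions supplied by Proposition~\ref{LocFreeInj}. Throughout write $\Dabs(X):=\Dabs(\QCoh^{G}(X),w\circ f)$ and $\Dabs(Y):=\Dabs(\QCoh^{G}(Y),w)$, with $\Mcal$ a matrix factorization on $Y$ and $\Ncal$ one on $X$; the goal is a natural isomorphism
\[
\Hom_{\Dabs(X)}\!\left(Lf^{*}\Mcal,\Ncal\right)\iso\Hom_{\Dabs(Y)}\!\left(\Mcal,Rf_{*}\Ncal\right).
\]

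First I would establish the strict adjunction. The ordinary sheaf adjunction $\Hom_{\QCoh^{G}(X)}(f^{*}\Acal,\Bcal)\iso\Hom_{\QCoh^{G}(Y)}(\Acal,f_{*}\Bcal)$ promotes to an isomorphism of the $\Hom$-complexes of matrix factorizations: both $f^{*}$ and $f_{*}$ are additive, respect the $\Z/2$-grading, intertwine the two components $d_{-1},d_{0}$ of the differential, and carry the potential $w$ to $w\circ f$ and back. Hence the unit and counit are morphisms of matrix factorizations and the bijection commutes with the differential $d(\phi)=d_{\Ncal}\circ\phi-(-1)^{|\phi|}\phi\circ d_{\Mcal}$ on the $\Hom$-complex. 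Passing to $H^{0}$ gives a natural isomorphism $\Hom_{H^{0}(X)}(f^{*}\Mcal,\Ncal)\iso\Hom_{H^{0}(Y)}(\Mcal,f_{*}\Ncal)$, where $H^{0}(X)$ and $H^{0}(Y)$ denote the respective homotopy categories.

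Next I would pass to the absolute derived categories. By Proposition~\ref{LocFreeInj}(2) choose a locally free representative $P\iso\Mcal$ in $\Dabs(Y)$, so that $Lf^{*}\Mcal\iso f^{*}P$; by Proposition~\ref{LocFreeInj}(1) choose an injective representative $I\iso\Ncal$ in $\Dabs(X)$, so that $Rf_{*}\Ncal\iso f_{*}I$. Since injective matrix factorizations are right-adapted (Proposition~\ref{LocFreeInj}(1) identifies $H^{0}$ of the injectives with $\Dabs$), we get
\[
\Hom_{\Dabs(X)}\!\left(f^{*}P,\Ncal\right)=\Hom_{\Dabs(X)}\!\left(f^{*}P,I\right)=\Hom_{H^{0}(X)}\!\left(f^{*}P,I\right),
\]
and the strict adjunction rewrites the right-hand side as $\Hom_{H^{0}(Y)}(P,f_{*}I)$. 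It then remains to identify this with $\Hom_{\Dabs(Y)}(P,f_{*}I)=\Hom_{\Dabs(Y)}(\Mcal,Rf_{*}\Ncal)$.

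The hard part is exactly this last identification $\Hom_{H^{0}(Y)}(P,f_{*}I)=\Hom_{\Dabs(Y)}(P,f_{*}I)$. One cannot simply argue that $f_{*}I$ is injective, because $f_{*}$ preserves injectivity only when $f$ is flat: for a non-flat morphism the pushforward of an injective sheaf need not be injective. The burden therefore shifts to showing that the locally free representative $P$ is homotopy-projective in the absolute derived category, i.e. left-orthogonal to every absolutely acyclic matrix factorization. I would prove this via the local criterion of Lemma~\ref{local}: over an affine chart a finite rank locally free matrix factorization is termwise projective, so $\Hom(P,-)$ is exact and $\Hom^{\bullet}(P,T)$ is the totalization of a short exact sequence of complexes, hence acyclic for every generating object $T$; stability under cones, shifts and direct summands then annihilates all absolutely acyclic objects, and the finite homological dimension (smoothness) hypothesis behind Proposition~\ref{LocFreeInj}(2) allows one to globalize from the affine charts. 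Naturality in $\Mcal$ and $\Ncal$ of each isomorphism above upgrades the resulting bijection to an adjunction of triangulated functors, the unit and counit being extracted formally. The genuinely delicate step is this homotopy-projectivity of locally free objects over a non-affine base, where global $\Hom$ out of a vector bundle fails to be exact and the local-to-global passage must be argued with care.
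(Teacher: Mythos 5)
Your skeleton---the underived adjunction $f^*\dashv f_*$, the injective model for $Rf_*$ from Proposition \ref{LocFreeInj}(1), the locally free model for $Lf^*$ from Proposition \ref{LocFreeInj}(2), and the reduction to comparing $\Hom_{H^0}$ with $\Hom_{\Dabs}$---is the standard route (the paper itself gives no argument: it cites Efimov--Positselski, Prop.\ 1.9 and Cor.\ 2.3(b)+(f), and asserts the proof extends to the equivariant case). But the step you yourself single out as the crux is handled by a claim that is false: a finite-rank locally free matrix factorization $P$ is \emph{not} left-orthogonal to all absolutely acyclic objects on a non-affine scheme, and no local-to-global argument can make it so. Concretely, take $Y=\Proj^1$, $w=0$, $P=(0,\Ocal_Y,0,0)$, and let $A$ be the total object of the short exact sequence of matrix factorizations
\[
0\to(0,\Ocal_Y(-2),0,0)\to(0,\Ocal_Y(-1)^{2},0,0)\to(0,\Ocal_Y,0,0)\to 0
\]
built from the Koszul/Euler sequence. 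Then $A$ is absolutely acyclic by definition, while a direct computation of the Hom complex gives
\[
\Hom_{H^0}(P,A[1])\iso \Gamma(\Ocal_Y)/\operatorname{im}\bigl(\Gamma(\Ocal_Y(-1)^{2})\to\Gamma(\Ocal_Y)\bigr)\iso k\neq 0,
\]
which is exactly $H^1(\Proj^1,\Ocal(-2))$ in disguise. The obstruction is the non-exactness of global sections, i.e.\ higher sheaf cohomology; it is invisible to Lemma \ref{local}, which concerns acyclicity of \emph{objects} and says nothing about vanishing of global Hom groups, and it is not cured by smoothness ($\Proj^1$ is smooth). So the ``delicate local-to-global passage'' you defer is not delicate but impossible as stated.

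The correct way to close the step uses an orthogonality attached to the target $f_*I$ rather than to the source $P$, transported through the underived adjunction. For $A$ absolutely acyclic \emph{with flat terms} one has $\Hom_{H^0(Y)}(A,f_*I)\iso\Hom_{H^0(X)}(f^*A,I)$; now $f^*$ carries short exact sequences of flat matrix factorizations to short exact sequences (flatness of the terms kills the Tor-obstruction), hence carries such $A$ to absolutely acyclic objects on $X$, and since $I$ has injective terms the right-hand side vanishes. Thus $f_*I$ is right-orthogonal to flat-acyclics, even though it is neither injective nor orthogonal to arbitrary acyclics. Combining this with the equivalence $\Dabs(\QCoh^G(Y)_{\fl},w)\iso\Dabs(\QCoh^G(Y),w)$ behind Proposition \ref{LocFreeInj}(2)---which guarantees that morphisms in $\Dabs(Y)$ out of $P$ are computed by roofs whose denominators have flat terms and flat-acyclic cones---yields $\Hom_{H^0(Y)}(P,f_*I)\iso\Hom_{\Dabs(Y)}(P,f_*I)$, completing the adjunction. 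This is, in essence, what the cited results of Efimov--Positselski encapsulate; your first two reductions can stand as written once this replacement is made.
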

\begin{proof}
In the non-equivariant setting this follows from \cite{EP} Prop. 1.9 and Cor. 2.3(b)+(f). The proof also works in the equivariant case.
\end{proof}

\begin{lemma}
Let $f: X \to Y$ and $g: Y \to Z$ be morphisms of smooth schemes. Then
\[ L(g \circ f)^* \iso Lf^* \circ Lg^* \qquad \text{and} \qquad R(g \circ f)_* \iso R g_* \circ Rf_*.\]
\end{lemma}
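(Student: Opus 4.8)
The plan is to establish the two isomorphisms separately, proving the statement for pull-back directly on locally free representatives and then deducing the statement for push-forward from it by a formal adjunction argument, so that no delicate acyclicity computation for $R(g\circ f)_*$ is needed.

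First I would prove $L(g \circ f)^* \iso Lf^* \circ Lg^*$. By Proposition \ref{LocFreeInj}(2) every object of $\Dabs(\QCoh^G(Z),w)$ is represented by a locally free matrix factorization, and on such an object the derived pull-back $Lg^*$ is computed by the underived term-wise functor $g^*$. Since the pull-back of a locally free sheaf is again locally free, $Lg^*\Mcal$ is represented by a locally free matrix factorization on $Y$, on which in turn $Lf^*$ is computed by the underived $f^*$. Thus on a locally free representative both $Lf^* \circ Lg^*$ and $L(g\circ f)^*$ are given term by term by ordinary sheaf pull-back, and the claim reduces to the standard sheaf-theoretic identity $(g\circ f)^* \iso f^* \circ g^*$ on $\QCoh^G$. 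One then checks that this identity is compatible with the differentials $d_{-1},d_0$ of the matrix factorization and that it matches the potentials, since $w\circ(g\circ f)=(w\circ g)\circ f$; hence it is an isomorphism of matrix factorizations that descends to the absolute derived categories.

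For the push-forward identity I would avoid a direct computation with injective representatives and instead argue by uniqueness of adjoints. The preceding proposition gives adjunctions $Lf^* \dashv Rf_*$ and $Lg^* \dashv Rg_*$, whence $Lf^* \circ Lg^*$ is left adjoint to $Rg_* \circ Rf_*$. On the other hand $L(g\circ f)^*$ is left adjoint to $R(g\circ f)_*$. Since I have already shown $L(g\circ f)^* \iso Lf^* \circ Lg^*$, uniqueness of right adjoints up to natural isomorphism yields $R(g\circ f)_* \iso Rg_* \circ Rf_*$, with matching potentials by the same bookkeeping as above.

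I expect the main obstacle to be the first step: namely making sure the reduction to underived, term-wise functors is legitimate inside the absolute derived category, i.e. that the term-wise sheaf isomorphism is natural enough to descend through the Verdier quotient by absolutely acyclic objects. This is controlled by the fact that $f^*$ and $g^*$ are exact on locally free matrix factorizations and preserve local freeness, so they send total complexes of short exact sequences of locally free matrix factorizations to absolutely acyclic objects; granting Proposition \ref{LocFreeInj}(2), the remaining verifications are routine. The push-forward half then costs nothing beyond the formal adjunction, which is why I defer it to the end.
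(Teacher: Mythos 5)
Your proposal is correct and follows essentially the same route as the paper: the paper proves the pull-back identity from the fact that pull-back preserves flat matrix factorizations (you use locally free representatives via Proposition \ref{LocFreeInj}(2), a harmless variant since locally free implies flat and both classes are preserved by pull-back), and it likewise deduces the push-forward identity from the pull-back one by uniqueness of adjoints.
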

\begin{proof}
The first part follows from the fact that pull-back takes flat to flat. The second part follows from the first by adjunction.
\end{proof}

\begin{lemma}
Let $f: X \to Y$ be an equivariant morphism of smooth schemes. There is an isomorphism of functors
\[ Lg^*( -) \otimes^L_X Lg^*(-) \iso Lg^*(- \otimes^L_X -). \]
\end{lemma}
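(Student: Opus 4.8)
The plan is to reduce the statement to an isomorphism of honest matrix factorizations and then invoke the classical monoidality of pull-back for quasi-coherent sheaves. By Proposition \ref{LocFreeInj}(2) every object of $\Dabs(\QCoh^G(X),w)$ is represented by a locally free matrix factorization, and for locally free (hence flat) objects both $f^*$ and $\otimes_X$ are exact on the abelian category of matrix factorizations, so that $Lf^*$ and $\otimes^L_X$ are computed by the underived functors on such representatives. Thus it suffices to produce a natural isomorphism $f^*(\Mcal \otimes \Ncal) \iso f^*\Mcal \otimes f^*\Ncal$ at the level of matrix factorizations, for $\Mcal$ and $\Ncal$ locally free.

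Next I would exhibit this isomorphism term by term. The tensor product $\Mcal \otimes \Ncal$ is assembled from the four sheaves $\Mcal^{-1}\otimes\Ncal^0$, $\Mcal^0\otimes\Ncal^{-1}$, $\Mcal^{-1}\otimes\Ncal^{-1}$ and $\Mcal^0\otimes\Ncal^0$, while $f^*$ acts term-wise; on each summand the desired map is the standard natural, $G$-equivariant isomorphism $f^*(\Fcal\otimes\Gcal)\iso f^*\Fcal\otimes f^*\Gcal$ for quasi-coherent sheaves. Assembling the four isomorphisms gives an isomorphism of the underlying $\Z/2$-graded sheaves, and the one thing to verify is that it intertwines the two differentials. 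Since the differential on a tensor product has the Leibniz form $d_i(m\otimes n)=d_i^\Mcal(m)\otimes n+(-1)^i\, m\otimes d_i^\Ncal(n)$ and $f^*$ is a functor commuting with the component differentials and structure maps, the signs and the individual terms are preserved, so the assembled map is an isomorphism of matrix factorizations; the potentials also match, as $f^*$ sends potential $w_1+w_2$ to $(w_1+w_2)\circ f = w_1\circ f + w_2\circ f$.

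Finally I would check naturality in both arguments and descent to the absolute derived category. Naturality at the sheaf level is classical and survives the term-wise assembly, and descent is automatic since both pull-back and tensor product were already shown to send absolutely acyclic matrix factorizations to absolutely acyclic ones, so the isomorphism of functors on the homotopy categories passes to the quotients. The main obstacle is really the bookkeeping in the middle step: confirming that the four term-wise sheaf isomorphisms are compatible with the $\Z/2$-graded differential and its sign conventions, and that a sufficiently coherent choice of locally free resolutions makes the resulting isomorphism natural rather than merely present object by object. Once this compatibility is secured, the statement follows formally.
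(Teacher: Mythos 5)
Your proposal is correct and follows essentially the same route as the paper's own proof: restrict to flat/locally free representatives via Proposition \ref{LocFreeInj}, where $Lg^*$ and $\otimes^L_X$ are computed by the underived functors, then assemble the classical sheaf-level isomorphism $g^*(\Fcal\otimes\Gcal)\iso g^*\Fcal\otimes g^*\Gcal$ term by term and check compatibility with the $\Z/2$-graded differentials. The only difference is cosmetic: the paper phrases the reduction through the flat subcategory $\Dabs(\QCoh(Y)_{\fl},w)$ while you phrase it through locally free objects, which amounts to the same thing.
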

\begin{proof}
The pull-back takes flat modules to flat modules and the tensor product is a functor $\otimes_Y^L : \Dabs(\QCoh(Y)_\fl,w_1) \times \Dabs(\QCoh(Y)_\fl,w_2) \to \Dabs(\QCoh(Y)_\fl, \linebreak[1] w_1+w_2)$. Thus, on $ \Dabs(\QCoh(Y)_\fl,w_1) \times \Dabs(\QCoh(Y)_\fl,w_2)$.
\begin{gather} 
Lg^*( -) \otimes^L_X Lg^*(-) \iso L(g^*(-) \otimes_X g^*(-))\\
Lg^*(- \otimes^L_X -) \iso L(g^*(- \otimes_Y -)).
\end{gather}
Thus, it is enough to show that $g^*(-) \otimes_X g^*(-) \iso g^*(- \otimes_Y -)$
\begin{align}
g^*(\Fcal_1,\Fcal_2) &\otimes_X g^*(\Gcal_1,\Gcal_2)\\
& =(g^* \Fcal_1 \otimes_X g^*\Gcal_2 \oplus g^* \Fcal_2 \otimes_X g^*\Gcal_1, g^* \Fcal_1 \otimes_X g^*\Gcal_1 \oplus g^* \Fcal_2 \otimes_X g^*\Gcal_2)\\
& \iso (g^*(\Fcal_1 \otimes_Y \Gcal_2) \oplus g^*(\Fcal_2 \otimes_Y \Gcal_1), g^*(\Fcal_1 \otimes_Y \Gcal_1) \oplus g^*(\Fcal_2 \otimes_Y \Gcal_2))\\
&=g^*((\Fcal_1,\Fcal_2) \otimes_Y (\Gcal_1, \Gcal_2)).
\end{align}
Clearly, the differentials also match. The result now follows from proposition \ref{LocFreeInj}.
\end{proof}

\begin{prop}[Projection formula]\label{ProjFormula}
Let $g: X \to Y$ be an equivariant morphism of smooth $G$-schemes. Then there are isomorphisms of functors
\[ Rg_*(Lg^*(-) \otimes^L_X -) \iso - \otimes_Y^L Rg_*(-), \qquad Rg_*(- \otimes_X^L Lg^*(-)) \iso Rg_*(-) \otimes_Y^L -.\]
\end{prop}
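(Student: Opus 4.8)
The plan is to reduce the statement to the classical projection formula for (equivariant) quasi-coherent sheaves, applied term by term to well-chosen representatives of the two arguments. To fix notation, write the first slot (the object on $Y$ that is pulled back) as $\Gcal$ and the second slot (the object on $X$ that is pushed forward) as $\Fcal$, so that the first displayed isomorphism reads $Rg_*(Lg^*\Gcal \otimes^L_X \Fcal) \iso \Gcal \otimes^L_Y Rg_*\Fcal$. The strategy is to arrange that \emph{both} sides are computed by genuine, non-derived, term-wise operations, at which point they are compared by the underived projection formula for a locally free sheaf.

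First I would use Proposition \ref{LocFreeInj} to choose convenient models: by part (2) represent $\Gcal$ by a locally free matrix factorization on $Y$, and by part (1) represent $\Fcal$ by an injective matrix factorization on $X$. With these choices $Lg^*\Gcal = g^*\Gcal$ is locally free, so that the derived tensor product $Lg^*\Gcal \otimes^L_X \Fcal$ equals the naive term-wise tensor product $g^*\Gcal \otimes_X \Fcal$; likewise $Rg_*\Fcal = g_*\Fcal$ is computed term-wise. The key technical input I would then record is that $g^*\Gcal \otimes_X \Fcal$ is again injective. Term by term this reduces to the statement that for an equivariant locally free sheaf $E$ of finite rank and an equivariant injective $I$ one has $E \otimes_{\Ocal_X} I \iso \mathcal{Hom}_{\Ocal_X}(E^\vee, I)$, and the latter is injective because $\mathcal{Hom}_{\Ocal_X}(E^\vee,-)$ is right adjoint to the exact functor $-\otimes_{\Ocal_X} E^\vee$ and hence preserves injectives. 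Granting this, $Rg_*(g^*\Gcal \otimes_X \Fcal) = g_*(g^*\Gcal \otimes_X \Fcal)$ is also term-wise, and similarly the right-hand side $\Gcal \otimes^L_Y g_*\Fcal = \Gcal \otimes_Y g_*\Fcal$ is term-wise since $\Gcal$ is locally free.

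With everything now term-wise, the required comparison on each term is exactly the classical projection formula $g_*(g^*\Gcal_i \otimes_X \Fcal_j) \iso \Gcal_i \otimes_Y g_*\Fcal_j$ for the finite rank locally free sheaf $\Gcal_i$ (locally $\Gcal_i$ is free and this is the commutation of $g_*$ with finite direct sums, and the local isomorphisms glue by naturality). I would then verify that this family of natural isomorphisms is compatible with the two differentials $d_{-1},d_0$ of the tensor-product matrix factorizations; this is automatic from naturality of the projection isomorphism in $\Fcal$, and it produces an isomorphism of matrix factorizations, hence of objects in $\Dabs(\QCoh^G(Y),w_1+w_2)$. Naturality in $\Fcal$ and $\Gcal$ upgrades this to the asserted isomorphism of functors. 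The second displayed isomorphism follows formally from the first together with the symmetry (up to Koszul sign) of $\otimes_X$ on matrix factorizations.

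I expect the genuine work to lie not in the individual sheaf-theoretic facts, each of which is standard, but in the bookkeeping of the final step: checking that the term-wise classical isomorphisms commute with the matrix factorization differentials and assemble into a well-defined natural transformation of functors on the absolute derived categories. A secondary point that must be handled with care is confirming that the chosen representatives (locally free for $\Gcal$, injective for $\Fcal$) simultaneously compute all three derived functors on both sides, which is precisely what Proposition \ref{LocFreeInj} and the injectivity of $g^*\Gcal \otimes_X \Fcal$ are there to guarantee.
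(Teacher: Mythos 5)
Your proposal is correct and matches the paper's proof in its essentials: the paper likewise reduces to representatives with the first argument flat/locally free (via Proposition \ref{LocFreeInj} and \cite[Cor. 2.3(h)]{EP}) and the second injective, uses the fact that flat tensor injective is injective (\cite[Lemma 2.5]{EP}, which you reprove via the $\mathcal{Hom}_{\Ocal_X}(E^\vee,-)$ adjunction) so that all three functors are computed underived, and then verifies the term-wise classical projection isomorphism together with compatibility of the differentials, reducing to the free case $\Fcal=(\Ocal_Y^{\oplus n},\Ocal_Y^{\oplus m})$ locally via Lemma \ref{local}. The only structural difference is that the paper first constructs the global comparison morphism $- \otimes_Y^L Rg_*(-) \to Rg_*(Lg^*(-) \otimes^L_X -)$ by adjunction from the counit $Lg^*Rg_* \to \id$ and only then checks it is an isomorphism on the chosen representatives, which is the cleanest way to make your final ``assemble by naturality'' step rigorous.
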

\begin{proof}
The proof is similar to the proof for the usual derived category of quasi-coherent sheaves (see for example \cite[Lemma 20.8.2 or 21.37.1]{Stacks}). We only prove the first formula since the other is similar. Like for quasi-coherent sheaves we can use the adjunction to construct a morphism. 
\[ - \otimes_Y^L Rg_*(-) \to Rg_*(Lg^*(-) \otimes^L_X -).\]
Indeed, the adjunction morphism $Lg^* Rg_* \to \id$ induces a morphism
\[ Lg^*(- \otimes_Y^L Rg_*(-)) \iso Lg^*(-) \otimes_X^L Lg^* Rg_*(-) \to Lg^*(-) \otimes^L_X -. \]
We obtain the desired morphism from the above morphism by adjunction.

If $\Fcal$ is flat and $\Ical$ is injective then $\Fcal \otimes_X \Ical$ is injective by \cite[Lemma 2.5]{EP}. Thus, when restricting to $\Dabs(\QCoh(Y)_\fl,w_Y) \times \Dabs(\QCoh(X)_{\text{inj}},w_X)$ we have
\begin{gather} 
Rg_*(Lg^*(-) \otimes^L_X -)=g_*(g^*(-) \otimes_X -),\\
- \otimes_Y^L Rg_*(-) = - \otimes_Y g_*(-).
\end{gather}
Thus, we only need to show that $g_*(g^*\Fcal \otimes_X \Gcal)\iso  \Fcal \otimes_Y g_*(\Gcal)$. By \cite[Cor. 2.3(h)]{EP} the inclusion of $\Dabs(\QCoh(Y)_{\text{lf}},w_Y)$ into $\Dabs(\QCoh(Y)_\fl,w_Y)$ is an equivalence of categories, so we may assume that $\Fcal \in \Dabs(\QCoh(Y)_\text{lf},w_Y)$. By Lemma \ref{local} proving that we have an isomorphism can be done locally. Hence, we may assume that $\Fcal=(\Ocal_Y^{\otimes n}, \Ocal_Y^{\otimes m})$. Write $\Gcal=(\Gcal_1,\Gcal_2)$. Then
\begin{align}
g_*(g^*(\Ocal_Y^{\otimes n}, \Ocal_Y^{\otimes m}) \otimes_X (\Gcal_1,\Gcal_2))&=g_*((\Ocal_X^{\otimes n}, \Ocal_X^{\otimes m}) \otimes_X (\Gcal_1,\Gcal_2))\\
&=g_*(\Gcal_2^{\otimes n} \oplus \Gcal_1^{\otimes m}, \Gcal_1^{\otimes n} \oplus \Gcal_2^{\otimes m})\\
&=(g_*\Gcal_2^{\otimes n} \oplus g_*\Gcal_1^{\otimes m}, g_*\Gcal_1^{\otimes n} \oplus g_*\Gcal_2^{\otimes m}).
\end{align}
On the other side we have
\begin{align}
(\Ocal_Y^{\otimes n}, \Ocal_Y^{\otimes m}) \otimes_Y g_*(\Gcal_1,\Gcal_2)) &=(g_*\Gcal_2^{\otimes n} \oplus g_*\Gcal_1^{\otimes m}, g_*\Gcal_1^{\otimes n} \oplus g_*\Gcal_2^{\otimes m}).
\end{align}
We check that the differentials match
{\allowdisplaybreaks
\begin{align}
g_*(g^*d_i \otimes_X d_i^\Gcal)(1 \otimes a)&=g_* g^* d_i(1) \otimes a +(-1)^{|1|} 1 \otimes g^* d_i^\Gcal(a)\\
&=(d_i(1)\otimes 1) \otimes a +(-1)^{|1|}1 \otimes 1 \otimes d_i^\Gcal(a)\\
&\iso d_i(1)a +(-1)^{|1|}d_i^\Gcal(a)
\end{align}}
On the other side we have
\begin{align}
(d_i \otimes_Y g_* d_i^\Gcal)(1 \otimes a)&=d_i(1)\otimes a+(-1)^{|1|} 1 \otimes g_* d_i^\Gcal(a)\\
&\iso d_i(1)a +(-1)^{|1|}d_i^\Gcal(a).
\end{align}
The result now follows from proposition \ref{LocFreeInj}.
\end{proof}

\begin{prop}\label{BaseChange}
Consider a Cartesian square of equivariant morphisms
\[ \xymatrix{Z \ar[r]^{h} \ar[d]_{f'} & Y \ar[d]^f \\ X \ar[r]_g & W} \]
Assume that either $u$ or $v$ is flat. Then there is a natural isomorphism between the composition of derived functors
\[ Lg^* \circ Rf_* \iso R {f'}_* \circ {Lh}^*. \]
\end{prop}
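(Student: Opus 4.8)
The plan is to prove the base change isomorphism for matrix factorizations by reducing it to the corresponding statement in the ordinary derived category of quasi-coherent sheaves, using the equivalences of Proposition \ref{LocFreeInj} to choose convenient representatives. Concretely, I would first observe that by Proposition \ref{LocFreeInj}(2) every object of $\Dabs(\QCoh^G(Y),w \circ f)$ is represented by a locally free (in particular flat) matrix factorization, so $Rf_*$ may be computed after replacing the input by an object of $\Dabs(\QCoh^G(Y)_{\operatorname{inj}},w \circ f)$ via Proposition \ref{LocFreeInj}(1). The derived functors $Lg^*$, $Rf_*$, $R{f'}_*$ and $Lh^*$ are thus all computed on the nose (without further resolution) once the input is taken to be locally free, respectively injective, in the sense already used throughout Section 5. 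The target is then to construct a natural transformation and to check it is an isomorphism termwise on the sheaf components $\Mcal^{-1}$ and $\Mcal^0$.

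Next I would construct the base change morphism itself. As in Proposition \ref{ProjFormula}, the natural map should be built by adjunction: the unit $\id \to R{f'}_* L{f'}^*$ together with the natural commutation $h^* g^* \iso {f'}^* f^*$ coming from the Cartesian square yields, after applying $Lg^*$ and using the adjunction $(Lh^*, Rh_*)$ or $(L{f'}^*, R{f'}_*)$, a canonical morphism $Lg^* \circ Rf_* \to R{f'}_* \circ Lh^*$. Since each term of a matrix factorization is an honest (quasi-)coherent sheaf, this morphism is just the classical base change morphism applied separately to $\Mcal^{-1}$ and $\Mcal^0$, and the two differentials $d_{-1}, d_0$ are carried along functorially; the flatness of $g$ or $f$ guarantees that the derived pullback and pushforward behave as in the underived sheaf-theoretic case.

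The core of the argument is then to invoke the ordinary quasi-coherent base change theorem (e.g. \cite[Lemma 20.8.2]{Stacks}) for each of the two sheaf components. Because the matrix factorization differentials $d_i$ are merely sheaf morphisms intertwining the two components, naturality of the classical base change isomorphism ensures that the two componentwise isomorphisms assemble into a morphism of matrix factorizations, i.e. they commute with $d_{-1}$ and $d_0$. Here the hypothesis that $u$ or $v$ is flat is exactly what the ordinary statement requires, and smoothness of all schemes (assumed from this point on) guarantees the finite-homological-dimension hypotheses needed for Proposition \ref{LocFreeInj} to apply so that the derived functors are represented by the chosen flat and injective models.

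I expect the main obstacle to be verifying that the componentwise classical base change isomorphisms are genuinely compatible with the matrix factorization structure, rather than the existence of the isomorphism on individual sheaves. One must confirm that the chosen representatives can be taken simultaneously flat (for the $Lh^*$ and $Lg^*$ legs) and injective or $f_*$-acyclic (for the $Rf_*$ and $R{f'}_*$ legs) along a Cartesian square, and that these choices are stable under the relevant pullbacks so that no extra acyclic correction terms appear. In practice this is handled, as in the earlier propositions of Section 5, by working locally via Lemma \ref{local} and reducing to free modules, where the identification of differentials is a direct check analogous to the computation carried out in the proof of Proposition \ref{ProjFormula}.
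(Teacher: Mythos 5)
Your construction of the base change morphism by adjunction and your appeal to Lemma \ref{local} to reduce to the affine case both match the paper, but the core of your argument contains a genuine gap: you cannot ``invoke the ordinary quasi-coherent base change theorem for each of the two sheaf components.'' A matrix factorization is not a complex of sheaves --- its differentials square to $w$, not to zero --- so the components $\Mcal^{-1},\Mcal^0$ carry no cohomology sheaves and admit no notion of quasi-isomorphism to which \cite[Lemma 20.8.2]{Stacks} could be applied. Worse, the flat and injective replacements used to compute $Lg^*$ and $Rf_*$ in $\Dabs$ are resolutions \emph{inside the matrix factorization category}: the map $\Pcal^{-1}\to(f_*\Mcal)^{-1}$ underlying such a replacement is just some morphism of sheaves, not a flat resolution of the component, so there is literally no ``classical base change morphism on $\Mcal^{-1}$'' to assemble. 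Finally, even if one produced componentwise comparison maps, being an isomorphism in $\Dabs(\QCoh^G(-),w)$ means the cone is \emph{absolutely acyclic}, a condition strictly finer than any componentwise acyclicity; this failure of componentwise criteria is exactly why Positselski's exotic derived categories are needed at all, so no naturality argument can close this hole.

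What the paper does instead is run the tor-independence argument of \cite[Lemma 35.17.3]{Stacks} entirely with tools internal to matrix factorizations. After localizing (so that $g_*$ and $h_*$ are exact and conservative), it observes that a morphism $\alpha$ is an isomorphism in $\Dabs$ if and only if $Rg_*\alpha$ is, and then computes $Rg_*$ of both sides of the base change map using the projection formula of Proposition \ref{ProjFormula} against the unit object $\overline{\Ocal_X}=(0,\Ocal_X,0,0)$: the map becomes $Rf_*\Mcal\otimes^L_W Rg_*\overline{\Ocal_X}\to Rf_*(\Mcal\otimes^L_Y f^*g_*\overline{\Ocal_X})$, using $h_*\overline{\Ocal_Z}=h_*{f'}^*\overline{\Ocal_X}$. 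Everything then reduces to the single assertion $f^*g_*\overline{\Ocal_X}\iso Lf^*g_*\overline{\Ocal_X}$, which is immediate when $f$ is flat and follows from one more application of the projection formula when $g$ is flat. If you want to repair your proof, replacing ``componentwise classical base change'' by this projection-formula argument against $\overline{\Ocal_X}$ is the missing idea.
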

\begin{proof}
The proof is inspired by the proof of tor-independent base change for quasi-coherent sheaves (see for example \cite[Lemma 35.17.3]{Stacks}). By \cite[Proposition 1.9]{EP} derived push-forward for matrix factorizations is right adjoint to pull-back so we can use the same construction as for coherent sheaves to get a canonical base change morphism
\[ Lg^* R f_* \Mcal \to R {f'}_* Lh^* \Mcal, \qquad \Mcal \in \Dabs(\QCoh(Y,w)).\]
This morphism is the adjoint of the morphism
\[ L{f'}^{*} Lg^* Rf_* \Mcal \iso Lh^* Lf^* R f_* \Mcal \to Lh^* \Mcal, \]
which is induced by the adjunction morphism $Lf^* Rf_* \Mcal \to \Mcal$.

That the base change morphism is an isomorphism can be checked locally by lemma \ref{local}. Hence, we may assume that $S'$ and $S$ are affine so $g_*$ is exact. We claim that it is enough to show that
\begin{align} 
Rg_* Lg^* Rf_* \Mcal \to Rg_* Rf'_* Lh^* \Mcal \iso R f_* Rh_* Lh^* \Mcal \label{1}
\end{align}
is an isomorphism. The reason is that a morphism $\alpha$ is an isomorphism if and only if $R g_* \alpha$ is an isomorphism. Indeed, by exactness $\text{Cone}(R g_* \alpha)=g_* \text{Cone}(\alpha)$ but $g_*$ is just restriction so if $g_* \text{Cone}(\alpha) \iso 0$ it is because $\text{Cone}(\alpha) \iso 0$.

To simplify the notation we write $\overline{\Fcal}$ for the matrix factorization $(0,\Fcal,0,0)$. Notice that 
\[(\Gcal_0,\Gcal_1,d_0,d_1) \otimes_X \overline{\Fcal}=(\Gcal_0 \otimes_X \Fcal, \Gcal_1 \otimes_X \Fcal, d_0,d_1).\]
In particular, $\otimes_X \overline{\Ocal_X}$ is the identity. Using the projection formula we get
\[ Rg_* Lg^{*} \Ncal \iso Rg_*(Lg^* \Ncal \otimes^{L}_X \overline{\Ocal_X})\iso \Ncal \otimes^{L}_W Rg_*  \overline{\Ocal_X}. \]
By base change we get that $h$ is an affine morphism so it is exact and the same formula works for $h$. Thus, \eqref{1} can be rewritten as
\[ R f_* \Mcal \otimes^{L}_W Rg_* \overline{\Ocal_X} \to R f_*(\Mcal \otimes^{L}_Y h_* \overline{\Ocal_Z}). \]
Notice that $h_* \overline{\Ocal_Z}=h_* f^{'*} \overline{\Ocal_X}$. Inserting this we get
\[ R f_* \Mcal \otimes^{L}_W Rg_* \overline{\Ocal_X} \to R f_*(\Mcal \otimes^L_Y f^* g_* \overline{\Ocal_X}) \]
If $f^* g_* \overline{\Ocal_X} \iso  Lf^* g_* \overline{\Ocal_X}$ this is the morphism in the projection formula and we are done. For $f$ flat this is clear. In the case where $g$ is flat we use the projection formula
\begin{align}
Lf^* g_* \overline{\Ocal_X}&=g_* \overline{\Ocal_X} \otimes_W^{L} \overline{\Ocal_Y}\\
& \iso g_*(\overline{\Ocal_X} \otimes_X^L L g^* \overline{\Ocal_X}) \iso g_* Lg^* \overline{\Ocal_X}.
\end{align}
Hence, we also have $f^* g_* \overline{\Ocal_X} \iso  Lf^* g_* \overline{\Ocal_X}$ in the case where $g$ is the flat morphism.
\end{proof}

\section{Convolution}

In this section we define a monoidal action on the category from section \ref{BraidActMatFact}. As in \cite{BR} the categorical action of the affine braid group will come from convolution with certain elements.

\subsection{Definition}

Let $X$ and $Y$ be smooth $G$-schemes and $V$ a $G$-vector space. Let $V^*$ denote the dual vector space with the following $G$-action 
\[ g \cdot f(x)=f(g^{-1}x) \qquad \forall g \in G, f \in \Hom(V,k), x \in V.\] 
Assume that we have equivariant morphisms $\mu : X \to V^*$ and $\nu : Y \to V$. These determine a $G$-invariant section $w \in \Ocal(Y \times X)$ by
\[ w : Y \times X \to k, \qquad (y,x) \mapsto \mu(x)(\nu(y)).\]

\begin{rem}
In the case we are interested in $X=T^*X$, $Y=\gtil$, $V=\g$, $\mu$ is the moment map and $\nu$ is the Grothendieck-Springer resolution.
\end{rem}

We would like construct an action on $\Dabs(\QCoh^G(Y \times X),w)$ similar to the one for coherent sheaves in \cite{BR}. However, if we use the exact same formula then the potentials will not match and we will land in the wrong category. To correct this, we introduce an additional factor of $V^*$
\[ \xymatrix{ & Y \times Y \times X \ar[ld]_{p} \ar[rd]^{p_{23}} \ar[d]^{p_{13}}& \\
Y \times Y \times V^* & Y \times X & Y \times X} \]
where $p:=\id \times \id \times \mu$. Define the potential on $Y \times Y \times V^*$ to be the section given by
\[ h : Y \times Y \times V^* \to k, \qquad (y_1,y_2,g) \mapsto g(\nu(y_1)-\nu(y_2)). \]
Then we have
\begin{align}
(w \circ p_{23}+h \circ p)(y_1,y_2,x) &=\mu(x)(\nu(y_2))+\mu(x)(\nu(y_1)-\nu(y_2))\\
&=\mu(x)(\nu(y_1))=w \circ p_{13}(y_1,y_2,x),
\end{align}
Thus, we can define the action $*$ to be the composition.
\[ \small{\xymatrix{\Dabs(\QCoh^G(Y \times Y \times V^*),h) \times \Dabs(\QCoh^G(Y \times X),w) \ar[d]^{Lp^* \times p_{23}^*}\\
\Dabs(\QCoh^G(Y \times Y \times X),h \circ p) \times \Dabs(\QCoh^G(Y \times Y \times X),w \circ p_{23}) \ar[d]^{\otimes^L_{Y \times Y \times X}}\\
\Dabs(\QCoh^G(Y \times Y \times X),h\circ p+w \circ p_{23}=w \circ p_{13}) \ar[d]^{Rp_{13*}}\\
\Dabs(\QCoh^G(Y \times X),w)}}\]

Now we need to define a monoidal structure on $\Dabs(\QCoh^G(Y \times Y \times V^*),h)$. Consider the projection maps
\[ \xymatrix{ & Y \times Y \times Y \times V^* \ar[ld]_{p_{12}} \ar[rd]^{p_{23}} \ar[d]^{p_{13}}& \\
Y \times Y \times V^* & Y \times Y \times V^* & Y \times Y \times V^*} \]
Notice that
\begin{align}
(h \circ p_{12} +h \circ p_{23})(y_1,y_2,y_3,g) &=g(\nu(y_1)-\nu(y_2))+g(\nu(y_2)-\nu(y_3))\\
&=g(\nu(y_1)-\nu(y_3))=h \circ p_{13}(y_1,y_2,y_3,g).
\end{align}
Thus, we can define the convolution product $*$ as the composition.

\begin{align} 
\begin{tikzcd}
\Dabs(\QCoh^G(Y \times Y \times V^*),h) \times \Dabs(\QCoh^G(Y \times Y \times V^*),h) \arrow{d}{p_{12}^* \times p_{23}^*}\\
\Dabs(\QCoh^G(Y \times Y \times Y \times V^*),h \circ p_{12}) \times \Dabs(\QCoh^G(Y \times Y \times Y \times V^*),h \circ p_{23})\arrow{d}{\otimes^L_{Y \times Y \times Y \times V^*}}\\
\Dabs(\QCoh^G(Y \times Y \times Y \times V^*),h\circ p_{12}+h \circ p_{23}=h \circ p_{13}) \arrow{d}{Rp_{13*}}\\
\Dabs(\QCoh^G(Y \times Y \times V^*),h).
\end{tikzcd}
\end{align}
The proof that the convolution product is associative is similar to the proof of proposition \ref{assoc} below and both are similar to the proof of associativity in chapter 5.

\begin{prop}\label{assoc}
Let $\Mcal_1, \Mcal_2 \in \Dabs(\QCoh^G(Y \times Y \times V^*),h)$ and $\Ncal \in \Dabs(\QCoh^G(Y \times X),w)$. Then
\[ \Mcal_1*(\Mcal_2 * \Ncal) \iso (\Mcal_1 * \Mcal_2)* \Ncal. \]
\end{prop}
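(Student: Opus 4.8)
The plan is to prove associativity by expanding both sides as compositions of pull-backs, derived tensor products, and derived push-forwards over a common ambient space, then comparing the two resulting expressions term by term using the functorial relations established in Section 5. The natural common space is the fourfold product $Y \times Y \times Y \times X$, with all the relevant coordinate projections. Both $\Mcal_1 * (\Mcal_2 * \Ncal)$ and $(\Mcal_1 * \Mcal_2) * \Ncal$ should, after unwinding, be expressible as a single push-forward from this space of a triple derived tensor product of appropriate pull-backs of $\Mcal_1$, $\Mcal_2$, and $\Ncal$, and the whole point is that this symmetric expression does not depend on the order of association.

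First I would fix notation for the projections from $Y \times Y \times Y \times X$ to its various faces, paying attention to which copies of $Y$ and which copy of $X$ each map records, and verify that under these projections the potentials add up correctly, exactly as in the two commutative-square computations preceding the statement: the key identities are $h(y_1,y_2,\mu(x)) + w(y_2,x) = w(y_1,x)$ and $h(y_1,y_2,\mu(x)) + h(y_2,y_3,\mu(x)) = h(y_1,y_3,\mu(x))$, which guarantee that every tensor product lands in a category whose potential is the sum of the input potentials, and that the final push-forward targets $\Dabs(\QCoh^G(Y \times X),w)$. Next I would expand, say, $\Mcal_1 * (\Mcal_2 * \Ncal)$ by first writing out the inner convolution $\Mcal_2 * \Ncal$ as a push-forward over $Y \times Y \times X$, then pulling back along the outer maps and applying the outer push-forward. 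The crucial tool here is the base change isomorphism of Proposition \ref{BaseChange}, which lets me commute the inner $Rp_{*}$ past the outer $Lp^*$ across the relevant Cartesian square, trading a push-followed-by-pull for a pull-followed-by-push over the fourfold product. After this, the projection formula of Proposition \ref{ProjFormula} is used to absorb the pulled-back factor into the tensor product, and the composition-of-push-forwards lemma ($R(g\circ f)_* \iso Rg_* \circ Rf_*$) collapses the two successive push-forwards into a single push-forward from $Y \times Y \times Y \times X$. Associativity of $\otimes^L$ and the compatibility of pull-back with tensor product (the lemma $Lg^*(-) \otimes^L Lg^*(-) \iso Lg^*(- \otimes^L -)$) then put everything into the symmetric form described above.

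Carrying out the identical expansion on $(\Mcal_1 * \Mcal_2) * \Ncal$ yields the same symmetric push-forward of a triple tensor product, and comparing the two completes the proof. I expect the main obstacle to be bookkeeping rather than conceptual: one must check at each base-change step that the relevant square is genuinely Cartesian and that the flatness hypothesis of Proposition \ref{BaseChange} is met (here the pull-backs along projections from product schemes are flat, so this should hold), and one must track the matching of potentials through every functor so that no step silently lands in the wrong absolute derived category. The potential-matching is the subtle point that distinguishes this from the classical coherent-sheaf associativity in \cite{BR}, since the auxiliary $V^*$-factor and the function $h$ were introduced precisely to repair the potentials; verifying that these corrections remain consistent across the reassociation is where I would concentrate the care.
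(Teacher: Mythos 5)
Your proposal is correct and follows essentially the same route as the paper's proof: the paper likewise works over the fourfold product $Y \times Y \times Y \times X$, applies the flat base change of Proposition \ref{BaseChange} across two Cartesian squares (the second involving $p_4 = \id \times \id \times \id \times \mu$ and $\pi_{13}$), and uses the projection formula of Proposition \ref{ProjFormula} together with compatibility of pull-back and tensor product to reach the symmetric expression you describe. The only difference is presentational — the paper transforms $\Mcal_1 * (\Mcal_2 * \Ncal)$ into $(\Mcal_1 * \Mcal_2) * \Ncal$ by one chain of isomorphisms passing through that symmetric form, rather than expanding both sides separately and comparing.
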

\begin{proof}
Consider the following cartesian diagram of projections
\[ \xymatrix{Y \times Y \times Y \times X \ar[r]^{p_{234}} \ar[d]_{p_{124}} & Y \times Y \times X \ar[d]^{p_{13}} \\
Y \times Y \times X \ar[r]_{p_{23}} & Y \times X} \]
Using the flat base change from proposition \ref{BaseChange} and the projection formula from proposition \ref{ProjFormula} we get
\begin{align}
\Mcal_1 * &(\Mcal_2 * \Ncal) \\
& =R p_{13*}(Lp^*\Mcal_1 \otimes_{Y \times Y \times X}^L p_{23}^* Rp_{13*}(Lp^*\Mcal_2 \otimes_{Y \times Y \times X}^L p_{23}^*\Ncal))\\
&\iso R p_{13*}(Lp^*\Mcal_1 \otimes_{Y \times Y \times X}^L Rp_{124*} p_{234}^*(Lp^*\Mcal_2 \otimes_{Y \times Y \times X}^L p_{23}^*\Ncal))\\
& \iso R p_{13*} Rp_{124*}(p_{124}^* Lp^*\Mcal_1 \otimes_{Y \times Y \times Y \times X}^L p_{234}^*(Lp^*\Mcal_2 \otimes_{Y \times Y \times X}^L p_{23}^*\Ncal)) \\
& \iso R p_{13*} Rp_{124*}(p_{124}^* Lp^*\Mcal_1 \otimes_{Y \times Y \times Y \times X}^L p_{234}^* Lp^*\Mcal_2 \otimes_{Y \times Y \times Y \times X}^L  p_{234}^* p_{23}^*\Ncal).
\end{align}
Since $p_{13} \circ p_{124}=p_{13} \circ p_{134}$ and $p_{23} \circ p_{234}=p_{23} \circ p_{134}$ we get
\begin{align}
\Mcal_1 * &(\Mcal_2 * \Ncal) \\
& \iso R p_{13*} Rp_{134*}(p_{124}^* Lp^*\Mcal_1 \otimes_{Y \times Y \times Y \times X}^L p_{234}^*Lp^*\Mcal_2 \otimes_{Y \times Y \times Y \times X}^L  p_{134}^* p_{23}^*\Ncal)\\
& \iso R p_{13*} (Rp_{134*} (p_{124}^*L p^*\Mcal_1 \otimes_{Y \times Y \times Y \times X}^L p_{234}^*Lp^*\Mcal_2) \otimes_{Y \times Y \times X}^L p_{23}^*\Ncal).
\end{align}
Set $p_4:=\id \times \id \times \id \times \mu$. Notice that $p \circ p_{124}=\pi_{12} \circ p_4$ and $p \circ p_{234}=\pi_{23} \circ p_4$.
\begin{align}
\Mcal_1 * &(\Mcal_2 * \Ncal) \\
&\iso R p_{13*}  (Rp_{134*} (Lp_{4}^* \pi_{12}^*\Mcal_1 \otimes_{Y \times Y \times Y \times X}^L Lp_{4}^* \pi_{23}^*\Mcal_2) \otimes_{Y \times Y \times X}^L p_{23}^*\Ncal)\\
& \iso R p_{13*} (Rp_{134*} Lp_{4}^* (\pi_{12}^*\Mcal_1 \otimes_{Y \times Y \times Y \times V^*}^L \pi_{23}^*\Mcal_2) \otimes_{Y \times Y \times X}^L p_{23}^*\Ncal)
\end{align}
Consider the cartesian diagram
\[ \xymatrix{Y \times Y \times Y \times X \ar[r]^{p_{4}} \ar[d]_{p_{134}} & Y \times Y \times Y \times V^* \ar[d]^{\pi_{13}} \\
Y \times Y \times X \ar[r]_{p} & Y \times Y \times V^*} \]
Using flat base change we get the result
\begin{align}
\Mcal_1 * &(\Mcal_2 * \Ncal) \\
& \iso R p_{13*} (Lp^* R\pi_{13*}(\pi_{12}^*\Mcal_1 \otimes_{Y \times Y \times Y \times V^*}^L \pi_{23}^*\Mcal_2) \otimes_{Y \times Y \times X}^L p_{23}^*\Ncal)\\
&=(\Mcal_1 * \Mcal_2) * \Ncal. \qedhere
\end{align}
\end{proof}

\subsection{Restriction to an action on the coherent category}

The category normally referred to as derived equivariant matrix factorizations is the category $\Dabs(\Coh^G(Y \times X),w)$ so we would like  our action to restrict to this category. The derived pull-back and tensor products both restricts to the coherent category. However, this is not the case for the push-forward along a non-proper map. Bezrukavnikov and Riche solved the corresponding problem for coherent sheaves by introducing a support condition. A similar notion exists in our setting. 

\begin{defin}
\begin{enumerate}[(i)]
\item The category-theoretical support of an equivariant coherent sheaf $\Mcal$ on $X$ is the minimal closed subset $T \subset X$ such that $\Mcal|_{X\backslash T}$ is absolutely acyclic in $\Dabs(\Coh^G(X),w)$.
\item For $T$ a closed subset of a scheme $X$ we denote by $\Dabs_T(\Coh^G(X),w)$ the quotient category of the homotopy category of coherent matrix factorizations category-theoretically supported inside $T$ by the thick subcategory of matrix factorizations which are absolutely acyclic in $\Dabs(\Coh^G(X),w)$.
\end{enumerate}
\end{defin}

The category $\Dabs_T(\Coh^G(X),w)$ is a full subcategory in $\Dabs(\Coh^G(X),w)$. In the non-equivariant setting  this is \cite[Prop. 1.10(d)]{EP} and the proof extends to the equivariant case.

\begin{lemma}
Let $\phi : X \to Y$ be a $G$-equivariant morphism of Noetherian separated $G$-schemes with enough $G$-equivariant vector bundles and $T$ a $G$-invariant closed subset in $X$.
\begin{enumerate}
\item If $\phi|_T : T \to Y$ is proper of finite type and $S$ is a closed subset in $\phi(T)$ then $R \phi_*$ restricts to
\[ R \phi_* : \Dabs_T(\Coh^G(X),w \circ \phi) \to \Dabs_S(\Coh^G(Y),w). \]

\item Let $T_1, T_2$ be closed subsets of $X$. Then the tensor product restricts to a functor
\[ \otimes^L_X : \Dabs_{T_1}(\Coh^G(X),h_1) \times \Dabs_{T_2}(\Coh^G(X),h_2) \to \Dabs_{T_1 \cap T_2}(\Coh^G(X), h_1+h_2). \]

\item Let $S$ be a closed subset of $Y$. Then the pull-back restricts to a functor
\[ L \phi^* : \Dabs_S(\Coh^G(Y),h) \to \Dabs_{X \backslash \phi^{-1}(Y \backslash S)}(\Coh^G(X), h \circ \phi). \]
\end{enumerate}
\end{lemma}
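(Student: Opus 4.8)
The three parts share a uniform structure: each functor is already defined at the level of $\Dabs(\QCoh^G(-),-)$ (by the Corollary following Proposition~\ref{LocFreeInj}), so in each case I would verify two things separately --- that it preserves coherence, and that it carries an object supported in the prescribed closed set to one supported in the claimed closed set. For every support claim the mechanism is the same: restrict the output to the open complement of the target support set, identify that restriction with the same functor applied to a restriction of the input, observe that this restricted input is absolutely acyclic (hence zero in the absolute derived category), and conclude via Lemma~\ref{local} that the output is absolutely acyclic over that complement.

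I would prove part (2) first, as the model case. Coherence of $\Mcal_1 \otimes^L_X \Mcal_2$ is immediate on the smooth scheme $X$. For the support, use the open cover $X \setminus (T_1 \cap T_2) = (X \setminus T_1) \cup (X \setminus T_2)$. Tensor product commutes with restriction to opens, and over $X \setminus T_1$ the factor $\Mcal_1$ is absolutely acyclic; since tensoring an absolutely acyclic matrix factorization with a flat one stays absolutely acyclic, the restriction of $\Mcal_1 \otimes^L_X \Mcal_2$ to $X \setminus T_1$ is absolutely acyclic, and symmetrically over $X \setminus T_2$. By Lemma~\ref{local} the restriction to $X \setminus (T_1 \cap T_2)$ is absolutely acyclic, so the support lies in $T_1 \cap T_2$.

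Part (3) follows the same pattern. First note $X \setminus \phi^{-1}(Y \setminus S) = \phi^{-1}(S)$, so the assertion is that $L\phi^* \Ncal$ is supported in $\phi^{-1}(S)$. Over the open set $\phi^{-1}(Y \setminus S)$, functoriality of pull-back along the open immersion $Y \setminus S \hookrightarrow Y$ identifies the restriction of $L\phi^* \Ncal$ with $L(\phi')^*(\Ncal|_{Y \setminus S})$, where $\phi'$ is the induced map. Since $\Ncal$ is supported in $S$, the input $\Ncal|_{Y \setminus S}$ is absolutely acyclic, and pull-back preserves absolute acyclicity; hence the restriction is absolutely acyclic and the support lies in $\phi^{-1}(S)$. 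Coherence of $L\phi^*$ is clear.

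Part (1) is the substantive case. For the support I would restrict $R\phi_* \Mcal$ to the open complement of $\phi(T)$ --- closed in $Y$ because $\phi|_T$ is proper --- and apply flat base change (Proposition~\ref{BaseChange}) along the open immersion to identify this restriction with $R(\phi'')_*(\Mcal|_{\phi^{-1}(Y \setminus \phi(T))})$. As $\phi^{-1}(Y \setminus \phi(T))$ is disjoint from $T$, the input is absolutely acyclic, hence zero in the absolute derived category, so its push-forward vanishes; thus $R\phi_* \Mcal$ is supported in $\phi(T)$, which is contained in the relevant $S$ (and $\Dabs_{\phi(T)} \subseteq \Dabs_S$ whenever $\phi(T) \subseteq S$). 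The hard part will be coherence: category-theoretical support in $T$ does not bound the set-theoretic support of the terms of $\Mcal$, so a priori $R\phi_*$ only lands in $\Dabs(\QCoh^G(Y),w)$. I would circumvent this by invoking the support machinery of \cite{EP} (the equivariant analogue of \cite[Prop.~1.10]{EP}, cited just after the definition of $\Dabs_T$) to replace $\Mcal$ by a coherent matrix factorization whose terms are set-theoretically supported on $T$; since $\phi|_T$ is proper of finite type, Grothendieck's coherence theorem for proper morphisms then forces the terms of $R\phi_* \Mcal$ to be coherent, placing it in $\Dabs_{\phi(T)}(\Coh^G(Y),w)$.
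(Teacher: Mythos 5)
Parts (2) and (3) of your proposal are correct and coincide with the paper's own argument: restrict to the open complement, use that the tensor product of an absolutely acyclic factorization with a flat one is absolutely acyclic, and conclude by locality (Lemma \ref{local}); for (3) the paper reduces to (2) by writing pull-back as $\phi^{-1}(-)\otimes_{\phi^{-1}\Ocal_Y}\Ocal_X$, while you use functoriality along the open immersion --- an immaterial difference. Your support argument in part (1), via flat base change (Proposition \ref{BaseChange}) along the open immersion $Y\setminus\phi(T)\hookrightarrow Y$, is a valid and in fact cleaner alternative to the paper's direct local computation with $\phi_*\Mcal(V)=\Mcal(\phi^{-1}(V))$.

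The gap is in the coherence step of part (1). The paper does not prove this step at all: it cites \cite[Lemma 3.5]{EP}, which is exactly the statement that $R\phi_*$ carries $\Dabs_T(\Coh(X),w\circ\phi)$ into $\Dabs(\Coh(Y),w)$ when $\phi|_T$ is proper, remarking that the proof extends to the equivariant setting. Your attempted derivation --- replace $\Mcal$ by a representative whose terms are set-theoretically supported on $T$, then let ``Grothendieck's coherence theorem force the terms of $R\phi_*\Mcal$ to be coherent'' --- does not go through as written. The derived push-forward of a matrix factorization is not computed term by term: by Proposition \ref{LocFreeInj} it is defined by passing to an injective (or finite-injective-dimension) representative and applying $\phi_*$ to that, and the terms of this representative, hence of its push-forward, are only quasi-coherent no matter how the original $\Mcal$ is supported. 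Moreover, since $d^2=w\neq 0$, a matrix factorization has no cohomology sheaves to which coherence of higher direct images could be applied, so there is no object on which your appeal to Grothendieck's theorem can act. Producing a coherent representative of $R\phi_*\Mcal$ is precisely the nontrivial content of EP's Lemma 3.5, so your sketch presupposes what it needs to prove. (Note also that your appeal to an equivariant \cite[Prop.~1.10]{EP} for the set-theoretic replacement goes beyond what the paper extracts from that proposition, namely fullness of $\Dabs_T(\Coh^G(X),w)\subset\Dabs(\Coh^G(X),w)$, so that step would itself require justification.)
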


\begin{proof}
1) By \cite[lemma 3.5]{EP} $R \phi_*$ restricts to a functor
\[ R \phi_* : \Dabs_T(\Coh^G(X),w \circ \phi) \to \Dabs(\Coh^G(Y),w)\]
so we only need to check the support. Since acyclicity is a local property we may assume that $X$ and $Y$ are affine so $\phi_*$ is exact. For $V \subseteq Y$ open $\phi_* \Mcal(V)=\Mcal(\phi^{-1}(V))$ so if $\Mcal|_U$ is absolutely acyclic then $\phi_* \Mcal|_V$ is absolutely acyclic for $V \subseteq \phi(U)$.

2) It is well-know that the derived tensor product of coherent sheaves on a Noetherian scheme is coherent. A tensor product is acyclic if one of the factors is acyclic and the other is flat. Thus, for an open set $\Vcal$ the matrix factorization $(\Mcal \otimes_{\Ocal_X} \Ncal)|_\Vcal =\Mcal|_\Vcal \otimes_{\Ocal_X |_{\Vcal}} \Ncal|_\Vcal$ is acyclic when $\Vcal \subseteq X\backslash T_1$ or $\Vcal \subseteq X\backslash T_2$. Hence if $\Vcal \subseteq X \backslash T_1 \cup X\backslash T_2=X \backslash (T_1 \cap T_2)$.

3) Let $\Mcal \in \Dabs_S(\Coh^G(Y),h)$. Then $\phi^{-1} \Mcal |_{\phi^{-1}(Y \backslash S)}$ is absolutely acyclic. By (2) this implies that $L \phi^*(\Mcal) \in \Dabs_{X \backslash \phi^{-1}(Y \backslash S)}(\Coh^G(X), h \circ \phi)$.
\end{proof}

\begin{cor}
\begin{enumerate}
\item The convolution action restricts to
\[* : \Dabs_{Y \times_V Y \times V^*}(\Coh^G(Y \times Y \times V^*),h) \times \Dabs(\Coh^G(Y \times X),w) \to \Dabs(\Coh^G(Y \times X),w)\]
\item The category $\Dabs_{Y \times_V Y \times V^*}(\Coh^G(Y \times Y \times V^*),h)$ is monoidal.
\end{enumerate}
\end{cor}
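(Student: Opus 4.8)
The plan is to deduce both statements from the preceding lemma by tracking the category-theoretic support through each of the three stages of the convolution, the only genuinely new input being the properness of the structural projections after restriction to the relevant fibre-product loci. For part (1), I begin with $\Mcal$ supported on $S := Y \times_V Y \times V^*$ and $\Ncal \in \Dabs(\Coh^G(Y \times X),w)$ arbitrary, and run the defining composite $Rp_{13*} \circ (\otimes^L) \circ (Lp^* \times p_{23}^*)$. By part (3) of the lemma, $Lp^*\Mcal$ is supported on $(Y \times Y \times X) \setminus p^{-1}((Y \times Y \times V^*) \setminus S) = p^{-1}(S)$, and since $p = \id \times \id \times \mu$ one computes $p^{-1}(S) = (Y \times_V Y) \times X$. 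The second factor $p_{23}^*\Ncal$ carries no support restriction, so by part (2) the tensor product $Lp^*\Mcal \otimes^L_{Y \times Y \times X} p_{23}^*\Ncal$ is again supported on $(Y \times_V Y) \times X$. It then remains to apply $Rp_{13*}$, and by part (1) this lands in the coherent category provided the restriction of $p_{13}$ to $(Y \times_V Y) \times X$ is proper of finite type. But $p_{13}$ forgets the middle copy of $Y$, so its restriction is the projection $Y \times_V Y \to Y$ (a base change of $\nu$ along $\nu$) times $\id_X$; since $\nu$ is proper—it is the Grothendieck-Springer resolution in the case of interest—this restriction is proper, which yields (1).

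For part (2), I would run the identical bookkeeping for the convolution $\Mcal_1 * \Mcal_2 = Rp_{13*}(p_{12}^*\Mcal_1 \otimes^L p_{23}^*\Mcal_2)$ on $Y \times Y \times V^*$, now with both inputs supported on $S$. By part (3), $p_{12}^*\Mcal_1$ is supported on $p_{12}^{-1}(S) = \{\nu(y_1) = \nu(y_2)\}$ and $p_{23}^*\Mcal_2$ on $\{\nu(y_2) = \nu(y_3)\}$; by part (2) their tensor product is supported on the intersection $T := \{\nu(y_1) = \nu(y_2) = \nu(y_3)\}$. Restricting $p_{13}$ (which forgets $y_2$) to $T$ gives a morphism whose fibres are copies of $\nu^{-1}(\nu(y_1))$, hence proper by properness of $\nu$, and whose image lies in $S$ since $\nu(y_1) = \nu(y_3)$ on $T$. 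Part (1) of the lemma then places $\Mcal_1 * \Mcal_2$ in $\Dabs_S(\Coh^G(Y \times Y \times V^*),h)$, so convolution preserves the subcategory.

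To upgrade closure under $*$ to a full monoidal structure I would still supply associativity and a unit. Associativity is obtained by repeating verbatim the diagram chase of Proposition \ref{assoc}—flat base change (Proposition \ref{BaseChange}) together with the projection formula (Proposition \ref{ProjFormula})—the support conditions being preserved at each step by the computation just carried out. For the unit I would take the structure sheaf of the diagonal $\Delta(Y) \times V^*$ with the zero potential; it is supported on $S$ because $\nu(y) = \nu(y)$ trivially, and a direct convolution computation (pulling back along the two projections, tensoring, and pushing forward) shows it acts as a two-sided identity.

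The main obstacle is not the formal combination of the lemma's three parts, which assemble mechanically, but ensuring that \emph{every} push-forward invoked is along a proper morphism. The entire argument hinges on the fact that restricting $p_{13}$ to the relevant fibre-product locus yields a proper map, and this rests squarely on properness of $\nu$; without it the target would fail to be coherent. Consequently the delicate point is the support bookkeeping—computing each preimage and intersection exactly, and verifying that the resulting closed sets are precisely the loci on which the structural projections become proper—after which the conclusion follows at once.
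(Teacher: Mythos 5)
Your proposal is correct and follows essentially the same route as the paper: both proofs track the category-theoretical support through the three stages of the convolution using the three parts of the preceding lemma, and both inherit associativity from the quasi-coherent setting (the paper via fullness of the subcategories, you by rerunning the base-change/projection-formula chase). You are in fact somewhat more careful than the paper, which silently assumes that $p_{13}$ restricted to the fibre-product locus is proper (this indeed rests on properness of $\nu$, an implicit hypothesis of the general setup satisfied by the Grothendieck--Springer map) and which omits the unit $\Ocal_{\Delta(Y)\times V^*}$ from the corollary's proof altogether, deferring that verification to the Koszul duality section.
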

\begin{proof}
1) By the lemma the functors in the convolution all restrict to the full subcategory of coherent matrix factorizations. 
\[ \begin{tikzcd}
\Dabs_{Y \times_V Y \times V^*}(\Coh^G(Y \times Y \times V^*),h) \times \Dabs(\Coh^G(Y \times X),w) \arrow{d}{p^* \times p_{23}^*}\\
\Dabs_{Y \times_V Y \times X}(\Coh^G(Y \times Y \times X),h \circ p) \times \Dabs(\Coh^G(Y \times Y \times X),w \circ p_{23}) \arrow{d}{\otimes^L_{Y \times Y \times X}}\\
\Dabs_{Y \times_V Y \times X}(\Coh^G(Y \times Y \times X),w \circ p_{13}) \ar{d}{Rp_{13*}}\\
\Dabs(\Coh^G(Y \times X),w)
\end{tikzcd}\]

2) In the same way we get
\[ \begin{tikzcd}[row sep=0.2cm]
\Dabs_{Y \times_V Y \times V^*}(\Coh^G(Y \times Y \times V^*),h) \times \Dabs_{Y \times_V Y \times V^*}(\Coh^G(Y \times Y \times V^*),h) \arrow{ddd}{p_{12}^* \times p_{23}^*}\\
\\ \\
\Dabs_{Y \times_V Y \times Y \times V^*}(\Coh^G(Y \times Y \times Y \times V^*),h \circ p_{12}) \qquad \\
\qquad \times \Dabs_{Y \times Y \times_V Y \times V^*}(\Coh^G(Y \times Y \times Y \times V^*),h \circ p_{23})\arrow{ddd}{\otimes^L_{Y \times Y \times Y \times V^*}}\\
\\ \\
\Dabs_{Y \times_V Y \times_V \times Y \times V^*}(\Coh^G(Y \times Y \times Y \times V^*),h \circ p_{13}) \arrow{ddd}{Rp_{13*}}\\
\\ \\
\Dabs_{Y \times_V Y \times V^*}(\Coh^G(Y \times Y \times V^*),h). 
\end{tikzcd}\]

Since all categories are full subcategories of the ones involved in the convolution in the quasi-coherent setting the restricted convolutions are also associative.
\end{proof}

\section{Koszul duality}

The main theorem \ref{MainThm} would follow from \cite{BR} if we can construct a monoidal functor 
\[ D(\Coh^G(Y \times^R_V Y)) \to \Dabs_{Y \times_V Y \times V^*}(\Coh^G(Y \times Y \times V^*),h), \]
This turns out to be a bit hard so we will only construct the functor on a full subcategory which contains the generators of the braid group action. This functor is called Koszul duality, but it is not the functor from the previous chapter and no result from that chapter will be used. We prove all the properties of this functor needed for our proof.

\subsection{Definition of a Koszul duality functor}
Recall that $D(\Coh^G(Y \times^R_V Y))$ can be expressed as the normal derived category of a $DG$-category in the following way. Consider the function 
\[\rho : Y \times Y \to V, \qquad (y_1,y_2) \mapsto \nu(y_2)-\nu(y_1).\]
This induces a map $\rho^\sharp : V^* \to \Ocal_{Y \times Y}$. Since $Y \times_V Y=\rho^{-1}(0)$ we have a resolution
\[ \cdots \longrightarrow \Ocal_{Y \times Y} \otimes \bigwedge^2 V^* \longrightarrow \Ocal_{Y \times Y} \otimes V^* \stackrel{\theta}{\longrightarrow} \Ocal_{Y \times Y} \to \Ocal_{Y \times_V Y} \to 0, \]
where $\theta(f \otimes s)=f \rho^\sharp(s)$ and the differential is extended by Leibniz rule. Thus,
\[ D(\Coh^G(Y \times^R_V Y))\iso D(DG(\Ocal_{Y \times Y} \otimes \Lambda(V^*)-\module^G))\]

The first step in the construction is to define a functor
\[ DG(\Ocal_{Y \times Y} \otimes \Lambda(V^*)-\module^G) \to CDG(\Ocal_{Y \times Y} \otimes \Sym(V),h)-\module^G, \]
where $CDG(\Ocal_{Y \times Y} \otimes \Sym(V),h)-\module^G$ is the category of differential graded modules, where the differentials square to $h$ instead of zero. One way to construct such a functor is to tensor with a $\Ocal_{Y \times Y} \otimes \Lambda(V^*) \otimes \Sym(V)$-bimodule with differential $d$ satisfying $d^2=h$.

\begin{defin}
Pick a basis $(t_1, \dots, t_n)$ for $V$ and a dual basis $(\xi_1, \dots, \xi_n)$ for $V^*$. We define a grading with $\Ocal_{Y \times Y}$ in degree 0, $\xi_i$ in degree -1 and $t_i$ in degree 2. Consider the complex $K$ with terms
\[K^m:=\bigoplus_{m=2i-j} \Ocal_{Y \times Y}  \otimes \Lambda^j(\xi_1, \dots, \xi_n) \otimes \Sym^i(t_1, \dots, t_n)\]
and differential
\[ d(f \otimes x \otimes y):=f \otimes d_\Lambda(x) \otimes y+\sum_{k=1}^n f \otimes \xi_i x \otimes t_i y,\]
where $d_\Lambda$ is the usual differential on $\Lambda$. We call $K$ the Koszul complex.
\end{defin}

\begin{lemma}
The Koszul complex is in $CDG(\Ocal_{Y \times Y} \otimes \Sym(V),h)-\module^G$
\end{lemma}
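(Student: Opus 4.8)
The plan is to verify the three things implicit in the statement: that $K$, with the stated grading and with the module structure given by multiplication on the $\Ocal_{Y \times Y}$-factor and on the symmetric generators $t_i$, is a graded $\Ocal_{Y \times Y} \otimes \Sym(V)$-module; that the prescribed $d$ is an $\Ocal_{Y \times Y} \otimes \Sym(V)$-linear endomorphism of degree $+1$; and, most importantly, that $d^2 = h \cdot \id$. First I would record that the grading is consistent: a summand $\Ocal_{Y \times Y} \otimes \Lambda^j \otimes \Sym^i$ sits in degree $2i - j$, the first term of $d$ replaces $\Lambda^j$ by $\Lambda^{j-1}$ (degree $+1$) while the second replaces $(\Lambda^j,\Sym^i)$ by $(\Lambda^{j+1},\Sym^{i+1})$ (also degree $+1$), so $d$ is homogeneous of degree $+1$. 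Linearity over $\Ocal_{Y \times Y}$ is immediate, and linearity over $\Sym(V)$ holds because neither summand of $d$ touches the $\Sym$-factor except by multiplication by $t_i$, which commutes with the module action of the (even) generators $t_j$; hence there are no Koszul signs to track in the linearity check.

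The heart of the proof is the computation of $d^2$. I would write $d = d_\Lambda + \delta$, where $d_\Lambda$ is the Koszul differential of the resolution (contraction against $\rho^\sharp$, acting on the $\Lambda$-factor) and $\delta(f \otimes x \otimes y) = \sum_i f \otimes \xi_i x \otimes t_i y$, and expand $d^2 = d_\Lambda^2 + (d_\Lambda \delta + \delta d_\Lambda) + \delta^2$. Here $d_\Lambda^2 = 0$ since $d_\Lambda$ is a genuine differential, and $\delta^2 = 0$ because $\delta^2(f \otimes x \otimes y) = \sum_{i,k} f \otimes \xi_k \xi_i x \otimes t_k t_i y$ pairs an antisymmetric expression in the $\xi$'s against a symmetric one in the $t$'s and so vanishes after summing. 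For the cross term I would use that $d_\Lambda$ is an odd derivation with $d_\Lambda(\xi_i x) = \rho^\sharp(\xi_i) x - \xi_i d_\Lambda(x)$; the contributions $d_\Lambda \delta$ and $\delta d_\Lambda$ then share the term $\sum_i f \otimes \xi_i d_\Lambda(x) \otimes t_i y$ with opposite signs, and what survives is $\sum_i \rho^\sharp(\xi_i) f \otimes x \otimes t_i y$, i.e. multiplication by the element $\sum_i \rho^\sharp(\xi_i)\, t_i \in \Ocal_{Y \times Y} \otimes \Sym(V)$, which has the correct degree $2$ to be a curvature.

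It then remains to identify this element with $h$ and to check equivariance. Under $\Ocal_{Y \times Y \times V^*} \iso \Ocal_{Y \times Y} \otimes \Sym(V)$ the generator $t_i$ is the evaluation function $g \mapsto g(t_i)$ on $V^*$ and $\rho^\sharp(\xi_i)$ is $(y_1,y_2) \mapsto \xi_i(\nu(y_2) - \nu(y_1))$; expanding $\nu(y_2) - \nu(y_1)$ in the basis $(t_i)$ with dual basis $(\xi_i)$ shows that $\sum_i \rho^\sharp(\xi_i)\, t_i$ is exactly the function $(y_1,y_2,g) \mapsto g(\nu(y_2) - \nu(y_1))$, i.e. $h$ up to the overall sign fixed by the orientation of $\rho$ and of $d_\Lambda$. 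For $G$-equivariance I would note that $\rho^\sharp$ is equivariant because $\nu$ is, and that $\delta$ is given by the canonical invariant element $\sum_i \xi_i \otimes t_i \in V^* \otimes V$, which is basis-independent and $G$-invariant; hence $d$ is equivariant and $K$ is an object of $CDG(\Ocal_{Y \times Y} \otimes \Sym(V),h)-\module^G$. I expect the only real subtlety to be the sign bookkeeping in the graded Leibniz rule, so that the surviving cross term matches $h$ on the nose rather than $-h$.
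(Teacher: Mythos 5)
Your proof is correct and follows essentially the same route as the paper: decompose $d = d_\Lambda + \delta$, expand $d^2$, kill $d_\Lambda^2$ and $\delta^2$ (the latter by pairing the antisymmetric $\xi_k\xi_\ell$ against the symmetric $t_k t_\ell$), apply the Leibniz rule to the cross terms, and identify the surviving curvature $\sum_k \rho^\sharp(\xi_k)\otimes t_k$ with $h^\sharp$. Your additional verifications of the grading, $\Ocal_{Y\times Y}\otimes\Sym(V)$-linearity, and $G$-equivariance (via the invariance of the canonical element $\sum_k \xi_k \otimes t_k$) are details the paper leaves implicit, but the core computation is identical.
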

\begin{proof}
We check that $d^2=h$.
\begin{align}
d^2(f \otimes x \otimes y) &=f \otimes d_\Lambda^2 x \otimes y +\sum_{k=1}^n f \otimes d_\Lambda(\xi_k x) \otimes t_k y+\sum_{k=1}^n f \otimes \xi_k(d_\Lambda x) \otimes t_k y\\
& \qquad \quad +\sum_{k,\ell=1}^n f \otimes \xi_k \xi_\ell x \otimes t_k t_\ell y\\
&=\sum_{k=1}^n f \otimes (d_\Lambda(\xi_k x)+\xi_k d_\Lambda x) \otimes t_k y\\
&=\sum_{k=1}^n f \otimes ( \rho^\sharp(\xi_k) x -\xi_k d_\Lambda x + \xi_k d_\Lambda x) \otimes t_k y\\
&=\sum_{k=1}^n f \otimes \rho^\sharp(\xi_k)x \otimes t_k y.
\end{align}
By definition we have $h : Y \times Y \times V^* \stackrel{\rho \times \id}{\longrightarrow} V \times V^* \stackrel{\langle \:,\: \rangle}{\longrightarrow} k$. Hence,
\begin{gather}
h^\sharp=(\rho^\sharp \otimes \id) \circ \langle \:,\: \rangle^\sharp=\sum_{k=1}^n \rho^\sharp(\xi_k) \otimes t_k.
\end{gather}
So $d^2=h$.
\end{proof}

Using the lemma we can define the functor
\begin{gather}
\kappa : DG(\Ocal_{Y \times Y} \otimes \Lambda(V^*)-\module^G) \to CDG(\Ocal_{Y \times Y} \otimes \Sym(V),h)-\module^G,\\
\Mcal \mapsto \Mcal \otimes_{\Ocal_{Y \times Y} \otimes \Lambda(V^*)} K \iso \Mcal \otimes \Sym(V).
\end{gather}
To make it $\Z/2$-graded we take the direct sum of the all the odd terms and all the even terms. Notice that
\[ (\Mcal \otimes_{\Lambda(V^*)} K)^m=\bigoplus_{m=i+2s-r} \Mcal^i \otimes_{\Lambda(V^*)} \Lambda^r(V^*) \otimes \Sym^s(V) \]
So $(\Mcal \otimes_{\Lambda(V^*)} K)^\text{odd} \iso \Mcal^\text{odd} \otimes \Sym(V)$ and likewise for the even part. 

\begin{lemma}
$\kappa$ descends to the homotopy categories.
\end{lemma}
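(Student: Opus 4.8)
The plan is to show that $\kappa$ sends chain homotopies to chain homotopies, so that it factors through the homotopy categories on both sides. Since $\kappa$ is defined by tensoring with the fixed bimodule $K$ over $\Ocal_{Y \times Y} \otimes \Lambda(V^*)$, it is an additive functor; the only thing to verify is that if two DG-morphisms $f, g : \Mcal \to \Ncal$ are homotopic in $DG(\Ocal_{Y \times Y} \otimes \Lambda(V^*)\text{-}\module^G)$, then $\kappa(f)$ and $\kappa(g)$ are homotopic in $CDG(\Ocal_{Y \times Y} \otimes \Sym(V),h)\text{-}\module^G$. Equivalently, it suffices to check that $\kappa$ takes a null-homotopic morphism to a null-homotopic one.

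First I would unwind what it means for $f - g$ to be null-homotopic on the source: there is a degree $-1$ map $s : \Mcal \to \Ncal$ of $\Ocal_{Y \times Y} \otimes \Lambda(V^*)$-modules with $f - g = d_\Ncal s + s\, d_\Mcal$. Applying $\kappa = - \otimes_{\Ocal_{Y \times Y} \otimes \Lambda(V^*)} K$, I would set $\kappa(s) := s \otimes \id_K$ as the candidate homotopy on the target. Because $s$ is $\Lambda(V^*)$-linear, the map $s \otimes \id_K$ is well-defined on the relative tensor product, and it has degree $-1$ for the total grading (the grading with $\xi_i$ in degree $-1$ and $t_i$ in degree $2$ used to build $K$). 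The key computation is then that the differential on $\kappa(\Mcal) = \Mcal \otimes_{\Lambda(V^*)} K$ splits as the part coming from $d_\Mcal$ acting on the first factor and the part coming from the internal differential $d_K$ of the Koszul complex acting on the second. Writing this out, $d_{\kappa(\Ncal)} \kappa(s) + \kappa(s)\, d_{\kappa(\Mcal)}$ collects the $d_K$-contributions, which cancel because $s \otimes \id_K$ commutes with $\id \otimes d_K$ up to the usual Koszul sign, and the $d_\Mcal/d_\Ncal$-contributions, which reproduce $(d_\Ncal s + s\, d_\Mcal)\otimes \id_K = (f-g)\otimes \id_K = \kappa(f) - \kappa(g)$. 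Hence $\kappa(f) - \kappa(g)$ is null-homotopic on the target.

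The one genuine subtlety is the interaction between $s$ and the internal Koszul differential $d_K$: since $\kappa(s) = s \otimes \id_K$ involves no action on $K$, I must confirm that the $d_K$-terms produced by $d_{\kappa(\Ncal)}\,\kappa(s)$ and $\kappa(s)\,d_{\kappa(\Mcal)}$ cancel exactly and do not leave a leftover term involving $h$. This is where the curvature enters: the differentials on source and target square to $0$ and to $h$ respectively, but $s$ is a homotopy only with respect to the genuine (square-zero) differentials $d_\Mcal, d_\Ncal$, so I must check that tensoring with $K$ does not introduce a curvature obstruction to the homotopy relation. The point is that the homotopy relation $f - g = d_\Ncal s + s d_\Mcal$ is linear in the differentials and survives the functor precisely because $s$ is $\Lambda(V^*)$-linear, so it commutes with the part of $d_K$ built from $\xi_i \otimes t_i$; the signs work out by the Koszul sign rule since $s$ has odd degree. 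I expect this sign-and-curvature bookkeeping to be the main obstacle, but it is a routine verification once the decomposition of $d_{\kappa}$ into its two pieces is made explicit.

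Finally, I would note that the argument is entirely formal in that it uses only that $\kappa$ is given by tensoring with a fixed bimodule and that the candidate homotopy is the tensor of the original homotopy with the identity on that bimodule; this is the standard reason that a tensor functor descends to homotopy categories, adapted to the curved setting by the observation that homotopies are insensitive to the curvature term. This shows $\kappa$ is well-defined from $H^0$ of the source to $H^0$ of the target, completing the lemma.
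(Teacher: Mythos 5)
Your proof is correct, and its mechanism is the same as the paper's: $\kappa = -\otimes_{\Ocal_{Y\times Y}\otimes\Lambda(V^*)}K$ is additive, the differential on $\kappa(\Mcal)$ splits as $d_\Mcal\otimes\id$ plus the Koszul term $\sum_k \xi_k(-)\otimes t_k(-)$, and graded $\Lambda(V^*)$-linearity of a map is exactly what lets it pass across that Koszul term. The one genuine difference is the degree in which the computation is carried out. The paper takes a (degree-$0$) morphism $f$ and checks that $f\otimes\id$ commutes with the two differentials, i.e.\ that chain maps go to chain maps; since $f$ is even, no signs appear, and the preservation of homotopies --- which is what ``descends to the homotopy categories'' literally requires --- is left implicit. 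You instead do the degree $-1$ computation for a null-homotopy $s$, where the cross terms $\sum_k\bigl(\xi_k s(m) + s(\xi_k m)\bigr)\otimes t_k(-)$ cancel precisely because $s$ is graded $\Lambda(V^*)$-linear of odd degree, $s(\xi_k m) = -\xi_k s(m)$; this is the Koszul-sign point you correctly flag, and it is the part of the argument the paper does not write down. Your remark that the curvature $h$ is invisible to the homotopy relation is also right: the relation $f-g = d_\Ncal s + s\,d_\Mcal$ involves the differentials only linearly, so nothing in it forces $d^2=0$. In short, your proof and the paper's are two homogeneous components of the single statement that $\phi\mapsto\phi\otimes\id_K$ is a map of Hom complexes (i.e.\ that $\kappa$ is a DG functor into the curved DG category); yours verifies the component that the lemma actually asserts, so if anything it is the more complete of the two.
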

\begin{proof}
Let $f : \Mcal \to \Ncal$ be a homotopy equivalence in $DG(\Ocal_{Y \times Y} \otimes \Lambda(V^*)-\module^G)$. We want to show that $\prod f \otimes \id : \kappa(\Mcal) \to \kappa(\Ncal)$ is a homotopy equivalence in $H^0(\QCoh^G(Y \times Y \times V^*),h)$, i.e. the diagram
\[ \xymatrix@=3pt{{\Mcal^\odd} \otimes \Sym(V) \ar[r]^{\prod f \otimes \id} \ar@<-0.5em>[d]_{d_{\kappa(\Mcal)}} & {\Ncal^\odd} \otimes \Sym(V) \ar@<-0.5em>[d]_{d_{\kappa(\Ncal)}} \\
{\Mcal^\even} \otimes \Sym(V) \ar@<-0.5em>[u]_{d_{\kappa(\Mcal)}} \ar[r]^{\prod f \otimes \id} & {\Ncal^\even} \otimes \Sym(V) \ar@<-0.5em>[u]_{d_{\kappa(\Ncal)}}
} \]
is commutative.
\begin{align}
(f \otimes \id) \circ d_{\kappa(\Mcal)}(m \otimes s)&=f (d_\Mcal m) \otimes s+\sum_{k=1}^n f(\xi_k m) \otimes t_k s\\
&=d_\Ncal f(m) \otimes s + \sum_{k=1}^n \xi_k f(m) \otimes t_k s\\
&=d_{\kappa(\Ncal)} \circ (f \otimes \id)(m \otimes s). \qedhere
\end{align}
\end{proof}

We want a functor into the absolute derived category of coherent sheaves so we cannot take infinite direct sums of coherent modules. To avoid this we restrict our functor to the following category.

\begin{defin}
Let $\Acal$ be a dg-scheme. The category $\Perf^G(\Acal)$ is the full subcategory of the dg-category of $G$-equivariant $\Acal$-dg-modules whose objects are finite complexes of locally free modules of finite rank.
\end{defin}

By the lemma we get a functor
\begin{gather}
\kappa : \operatorname{H}^0(\Perf^G(\Ocal_{Y \times Y} \otimes \Lambda(V^*))) \to \Dabs(\Coh^G(Y \times Y \times V^*),h)\\
\Mcal \mapsto (\Mcal^{\text{odd}} \otimes \Sym(V), \Mcal^{\text{even}} \otimes \Sym(V),d,d),\\
 d(m \otimes s):=d_\Mcal m \otimes s + \sum_{k=1}^n \xi_k m \otimes t_k s. 
\end{gather}

We want the functor to descend to the derived category ${D_{\Perf}} (\Ocal_{Y \times Y} \otimes \Lambda(V^*)-\module^G$). By lemma \ref{local} checking that something is acyclic can be done locally so we may assume that $Y$ is affine. 

\begin{lemma}
Assume that $Y$ is affine. Then there is an equivalence of triangulated categories
\[H^0(\Perf^G(\Ocal_{Y \times Y} \otimes \Lambda(V^*))) \iso {D_{\Perf}}(\Ocal_{Y \times Y} \otimes \Lambda(V^*)-\module^G).\]
\end{lemma}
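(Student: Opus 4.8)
The statement is the standard identification of the homotopy category of perfect dg-modules with the perfect derived category, and I would prove it by checking that the evident localization functor is fully faithful and essentially surjective. Since $Y$ is affine, so is $Y \times Y$, and a $G$-equivariant $\Acal$-dg-module is the same datum as a dg-module over the dg-algebra $A := \Ocal(Y \times Y) \otimes \Lambda(V^*)$ (equipped with the Koszul differential $\theta$) carrying a compatible $G$-action. Composing the inclusion of $\Perf^G(\Acal)$ into the dg-category of all $G$-equivariant $\Acal$-dg-modules with passage to $H^0$ and localization at quasi-isomorphisms yields a functor
\[ \Phi : H^0(\Perf^G(\Acal)) \to D_{\Perf}(\Acal-\module^G); \]
its image lands in $D_{\Perf}$ because every object of $\Perf^G(\Acal)$ is by definition a finite complex of $G$-equivariant locally free modules of finite rank, and $\Phi$ is triangulated since $\Perf^G(\Acal)$ is closed under shifts and cones.

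The crux is full faithfulness, which I would deduce from the assertion that every $P \in \Perf^G(\Acal)$ is homotopy-projective (K-projective), i.e.\ $\Hom_{H^0}(P, N) = 0$ for every acyclic $\Acal$-dg-module $N$. This follows from a filtration argument: the underlying graded $\Acal$-module of $P$ is finitely generated locally free, and since $G$ is reductive in characteristic zero it is a direct summand of a finite free graded module; the differential is compatible with the degree filtration, so $P$ is a direct summand of a finite \emph{semifree} module, and finite semifree modules are homotopy-projective while direct summands of homotopy-projectives remain so. Using that reductivity of $G$ makes the functor of $G$-invariants exact, this homotopy-projectivity persists for the equivariant $\Hom$-complexes. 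Consequently, for $P, Q \in \Perf^G(\Acal)$ the localization map $\Hom_{H^0(\Perf^G(\Acal))}(P, Q) = \Hom_K(P, Q) \to \Hom_D(P, Q)$ is an isomorphism, which is exactly full faithfulness of $\Phi$.

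For essential surjectivity I would use that $D_{\Perf}(\Acal-\module^G)$ is by definition the thick subcategory of $D(\Acal-\module^G)$ generated by the rank-one free module $\Acal$. The image of $\Phi$ is a triangulated subcategory containing $\Acal$ and contained in $D_{\Perf}$; since every object of $\Perf^G(\Acal)$ is obtained from $\Acal$ by finitely many shifts, cones, and passage to direct summands, the image exhausts $D_{\Perf}$ once we know it is closed under summands. The latter follows because a direct summand of a finite complex of $G$-equivariant locally free modules is again quasi-isomorphic to such a complex, using once more that equivariant locally free modules of finite rank are closed under direct summands when $G$ is reductive; hence $H^0(\Perf^G(\Acal))$ is idempotent complete and its image is all of $D_{\Perf}$.

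The main obstacle is the homotopy-projectivity of Step two in the equivariant differential-graded setting. The semifree filtration argument is routine without equivariance, and the two places requiring care are the replacement of \emph{free} by \emph{locally free} (handled by writing equivariant locally free modules as summands of free ones) and the passage to $G$-invariant $\Hom$-complexes (handled by exactness of $(-)^G$ for $G$ reductive in characteristic zero). Everything else is formal manipulation of the localization functor.
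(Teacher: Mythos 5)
Your proof is correct and takes essentially the same route as the paper's: both reduce to showing that objects of $\Perf^G(\Acal)$ are (homotopy-)projective in the equivariant dg-module category, deducing this from the non-equivariant statement via exactness of $G$-invariants (equivalently, $\Ext^1_{A-\module^G}(P,M)=(\Ext^1_{A-\module}(P,M))^G$) for reductive $G$ in characteristic zero. The only difference is that you spell out details the paper leaves implicit, namely the semifree filtration argument behind the non-equivariant projectivity and the verification of essential surjectivity.
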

\begin{proof}
When $Y$ is affine the categories reduce to categories of modules over a DG-algebra. Set $A:=\Ocal_{Y \times Y} \otimes \Lambda(V^*)$. The result would follow from showing that objects in $\Perf^G(A)$ are projective in $A-\module^G$. Equivalently, 
\[ \Ext^1_{A-\module^G}(P,M)=0 \]
for all $P \in \Perf^G(A)$ and $M \in A-\module^G$. Recall that
\[ \Ext^1_{A-\module^G}(P,M)=(\Ext^1_{A-\module}(P,M))^G.\]
The result now follows from the fact that perfect complexes are projective in $A-\module$.
\end{proof}

Thus, we have constructed a functor
\[\kappa: {D_{\Perf}}(\Ocal_{Y \times Y} \otimes \Lambda(V^*)-\module^G) \to \Dabs(\Coh^G(Y \times Y \times V^*),h).\]
However, the full subcategory ${D_{\Perf}}(\Ocal_{Y \times Y} \otimes \Lambda(V^*)-\module^G)$ is not preserved by convolution since the derived push-forward along a non-proper maps does not send $\Perf$ to $\Perf$. To fix this, we restrict to the full subcategory whose cohomology over $Y \times Y$ is set-theoretically supported on $Y \times_V Y$, so that the final projection is proper on the support.
\[\kappa: {D_{\Perf, Y \times_V Y}}(\Ocal_{Y \times Y} \otimes \Lambda(V^*)-\module^G) \to \Dabs(\Coh^G(Y \times Y \times V^*),h).\]

\subsection{Compatibility with convolution}

To prove that $\kappa$ commutes with convolution we need some preparatory lemmas.

\begin{lemma}
Let $\Mcal, \Ncal \in D_{\Perf}(\Ocal_{Y \times Y} \otimes \Lambda(V^*)-\module^G)$. Then $\kappa(\Mcal) \boxtimes \kappa(\Ncal) \iso \kappa(\Mcal \boxtimes \Ncal).$
\end{lemma}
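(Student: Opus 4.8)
The plan is to reduce the statement to the level of the Koszul complex and then invoke the standard interchange (``Fubini'') isomorphism for external tensor products. Recall that $\kappa(\Mcal)$ is obtained by tensoring $\Mcal$ with the Koszul complex $K$ over $\Ocal_{Y \times Y} \otimes \Lambda(V^*)$ and folding the resulting $\Z$-graded complex into a $\Z/2$-graded matrix factorization with twisted differential $d(m \otimes s) = d_\Mcal m \otimes s + \sum_k \xi_k m \otimes t_k s$. Both sides of the claimed isomorphism live over the product space $(Y \times Y \times V^*)^2 \iso Y^4 \times (V^*)^2$, carrying the doubled symmetric algebra $\Sym(V) \otimes \Sym(V)$, the doubled exterior algebra $\Lambda(V^*) \otimes \Lambda(V^*)$, and the doubled potential $h \boxplus h$, by which I mean the sum of the potential $h$ pulled back from each of the two factors.

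First I would identify the Koszul complex attached to the doubled data. The bimodule computing $\kappa$ for the pair of spaces $(Y \times Y)^2$ with vector space $V \oplus V$ is
\[ \Lambda(V^* \oplus V^*) \otimes \Sym(V \oplus V) \iso \bigl(\Lambda(V^*) \otimes \Sym(V)\bigr)^{\otimes 2}, \]
and its Koszul differential decomposes, with the usual $\Z/2$-Koszul sign, as $d_K \otimes 1 + 1 \otimes d_K$. Hence the doubled Koszul complex is precisely $K \boxtimes K$. Its square is $h \boxplus h$, since the cross term is the graded anticommutator of $d_K \otimes 1$ and $1 \otimes d_K$, which vanishes, leaving $d_K^2 \otimes 1 + 1 \otimes d_K^2 = h \otimes 1 + 1 \otimes h$.

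Next I would apply the interchange law: because $\Mcal$ and $\Ncal$ are supported on independent copies of $Y \times Y$, the external tensor product commutes with the internal tensor products against $K$, giving
\[ (\Mcal \otimes_{\Lambda(V^*)} K) \boxtimes (\Ncal \otimes_{\Lambda(V^*)} K) \iso (\Mcal \boxtimes \Ncal) \otimes_{\Lambda(V^*) \otimes \Lambda(V^*)} (K \boxtimes K). \]
The left-hand side is $\kappa(\Mcal) \boxtimes \kappa(\Ncal)$, and by the previous step the right-hand side is $\kappa(\Mcal \boxtimes \Ncal)$.

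The only delicate point, and the main obstacle, is verifying that the folding into a $\Z/2$-graded matrix factorization is compatible with the external product, i.e.\ that the $\odd/\even$ decomposition of $K \boxtimes K$ agrees with the matrix-factorization external product of $\kappa(\Mcal)$ and $\kappa(\Ncal)$, including all signs coming from the tensor-product rule $d_i(m \otimes n) = d_i m \otimes n + (-1)^i m \otimes d_i n$. I would check this directly on an element $(m \otimes n) \otimes (s \otimes s')$: the twisted Koszul differential on $\kappa(\Mcal \boxtimes \Ncal)$ produces the terms $d_\Mcal m \otimes n \otimes s \otimes s'$ and $\sum_k \xi_k m \otimes n \otimes t_k s \otimes s'$ together with the two analogous terms acting on the $\Ncal$-factor, carrying the sign dictated by the parity of the $\Mcal$-factor. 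This is exactly the external-product differential of $\kappa(\Mcal)$ and $\kappa(\Ncal)$. Once the signs are matched the identification is an isomorphism of matrix factorizations over $(Y \times Y \times V^*)^2$, and it descends to $\Dabs$ by functoriality of $\kappa$ established above.
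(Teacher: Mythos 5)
Your proposal is correct and takes essentially the same route as the paper's proof: both identify the underlying terms by interchanging tensor factors (your observation that the doubled Koszul bimodule is $K \boxtimes K$ is what the paper carries out implicitly, term by term, using $\Sym(V\oplus V)\iso \Sym(V)\otimes\Sym(V)$), and both then reduce the real content to an explicit sign check on the twisted differentials, which you set up exactly as the paper does.
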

\begin{proof}
The functor $\boxtimes$ is clearly exact and takes $\Perf \times \Perf$ to $\Perf$. First we check that the matrix factorizations agree on terms
{\allowdisplaybreaks
\begin{align}
&\kappa(\Mcal) \boxtimes \kappa(\Ncal)\\
&=\begin{pmatrix} \Mcal^\text{odd} \otimes \Sym(V) \boxtimes \Ncal^\text{even} \otimes \Sym(V) \oplus \Mcal^\text{even} \otimes \Sym(V) \boxtimes \Ncal^\text{odd} \otimes \Sym(V)  \\
\uparrow \quad \downarrow\\
    \Mcal^\text{odd} \otimes \Sym(V) \boxtimes \Ncal^\text{odd} \otimes \Sym(V) \oplus \Mcal^\text{even} \otimes \Sym(V) \boxtimes \Ncal^\text{even} \otimes \Sym(V) \end{pmatrix}\\
&= \begin{pmatrix}(\Mcal^\text{odd} \boxtimes \Ncal^\text{even}\oplus \Mcal^\text{even} \boxtimes \Ncal^\text{odd} )\otimes \Sym(V) \otimes \Sym(V) \\
\uparrow \quad \downarrow\\
(\Mcal^\text{odd} \boxtimes \Ncal^\text{odd}\oplus \Mcal^\text{even} \boxtimes \Ncal^\text{even} )\otimes \Sym(V) \otimes \Sym(V)  
\end{pmatrix}\\
&= \begin{pmatrix}(\Mcal \boxtimes \Ncal)^\text{odd} \otimes \Sym(V) \otimes \Sym(V) \\
\uparrow \quad \downarrow\\
(\Mcal \boxtimes \Ncal)^\text{even}\otimes \Sym(V) \otimes \Sym(V)
\end{pmatrix}\\
&=\kappa(\Mcal \boxtimes \Ncal).
\end{align}}
Now we check the differentials
{\allowdisplaybreaks
\begin{align}
&d_{\kappa(\Mcal) \boxtimes \kappa(\Ncal)}(a \otimes r \boxtimes b \otimes s) =d(a \otimes r) \boxtimes b \otimes s +(-1)^{|a|} a \otimes r \boxtimes d(b \otimes s)\\
& \enskip =(d_\Mcal a \otimes r +\sum_k \xi_k a \otimes t_k r) \boxtimes b \otimes s  +(-1)^{|a|} a \otimes r \boxtimes (d_\Ncal b \otimes s +\sum_k \xi_k b \otimes t_k s)\\
&d_{\kappa(\Mcal \boxtimes \Ncal)}(a \boxtimes b \otimes r \otimes s)\\
& \enskip=d(a \boxtimes b) \otimes r \otimes s +\sum_k (\xi_k,0) \cdot (a \boxtimes b) \otimes (t_k,0) \cdot (r \otimes t) \\
& \qquad \qquad +\sum_k (0,\xi_k) \cdot (a \boxtimes b) \otimes (0,t_k) \cdot (r \otimes s)\\
& \enskip =d_\Mcal a \boxtimes b \otimes r \otimes s +(-1)^{|a|} a \boxtimes d_\Ncal b \otimes r \otimes s \\
& \hspace{4em} +\sum_k (-1)^{|a| |0|} \xi_k a \boxtimes b \otimes t_k r \otimes s+\sum_k (-1)^{|a| |\xi_k |} a \boxtimes \xi_k b \otimes r \otimes t_k s\\
& \enskip =d_\Mcal a \boxtimes b \otimes r \otimes s +(-1)^{|a|} a \boxtimes d_\Ncal b \otimes r \otimes s \\
& \hspace{4em} +\sum_k (-1)^{|a|} \xi_k a \boxtimes b \otimes t_k r \otimes s+\sum_k a \boxtimes \xi_k b \otimes r \otimes t_k s \qedhere
\end{align}}
\end{proof}

\begin{lemma}
Let $\theta : Z_1 \to Z_2$ be a morphism of schemes. 
\begin{enumerate}
\item If $Z_3$ is a closed subscheme of $Z_1$ and $\theta$ restricted to $Z_3$ is proper then the following diagram is commutative
\[ \xymatrix{D_{\Perf}(\Ocal_{Z_2} \otimes \Lambda(V)-\module^G)  \ar[d]^{\kappa_1} & D_{\Perf,Z_3}(\Ocal_{Z_1} \otimes \Lambda(V)-\module^G) \ar[d]^{\kappa_2} \ar[l]_{(\theta \times \id)^{\sharp}_*} \\
\Dabs(\QCoh^G(Z_2 \times V),h_2) & \Dabs(\QCoh^G(Z_1 \times V),h_1) \ar[l]_{R(\theta \times \id)_*^f}} \]

\item If $\theta$ is flat then the following diagram is commutative
\[ \xymatrix{{D_{\Perf}}(\Ocal_{Z_2} \otimes \Lambda(V)-\module^G) \ar[r]^{(\theta \times \id)^{\sharp*}}  \ar[d]^{\kappa_1} & {D_{\Perf}}(\Ocal_{Z_1} \otimes \Lambda(V)-\module^G) \ar[d]^{\kappa_2}  \\
\Dabs(\QCoh^G(Z_2 \times V),h_2) \ar[r]^{(\theta \times \id)^*} & \Dabs(\QCoh^G(Z_1 \times V),h_1)} \]
\end{enumerate}
\end{lemma}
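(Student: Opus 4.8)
The plan is to exploit the fact that, on underlying sheaves, $\kappa$ is nothing but pull-back along a projection equipped with one extra, fixed piece of differential. Write $q_i : Z_i \times V^* \to Z_i$ for the two projections. Since $q_i^*\Mcal = \Mcal \otimes \Sym(V)$, the matrix factorization $\kappa_i(\Mcal)$ has underlying terms $q_i^*\Mcal^{\odd}$ and $q_i^*\Mcal^{\even}$, and its differential is the sum $q_i^* d_\Mcal + \delta$, where $\delta := \sum_{k} \xi_k \otimes t_k$ is the Koszul term. The structural observation driving both parts is that $\theta \times \id$ sits in the Cartesian square
\[ \xymatrix{Z_1 \times V^* \ar[r]^{\theta \times \id} \ar[d]_{q_1} & Z_2 \times V^* \ar[d]^{q_2} \\ Z_1 \ar[r]_{\theta} & Z_2} \]
in which $q_1,q_2$ are flat, and that $\delta$ lives purely in the $V^*$-direction: it is $\Ocal_{Z_i}$-linear and $G$-equivariant, hence is untouched by both $(\theta \times \id)^*$ and $R(\theta \times \id)_*$.

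For part (2) I would argue as follows. As $\theta$ is flat we have $L\theta^* = \theta^*$, and $\theta^*$ preserves local freeness and finite rank, so $(\theta \times \id)^{\sharp*}$ indeed lands in $\Perf$ and $L(\theta\times\id)^* = (\theta\times\id)^*$. Commutativity of the square gives $(\theta \times \id)^* q_2^* \iso q_1^* \theta^*$ on underlying sheaves, so the matrix factorizations $(\theta\times\id)^*\kappa_1(\Mcal)$ and $\kappa_2((\theta\times\id)^{\sharp*}\Mcal)$ have the same terms. It then remains to match differentials: the $q_2^* d_\Mcal$ part is carried to $q_1^*\theta^* d_\Mcal = q_1^* d_{\theta^*\Mcal}$ by functoriality of pull-back, while $\delta$ is sent to $\delta$ because $\xi_k,t_k$ generate the structure pulled back from $V^*$ and are fixed by $\theta$. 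This is a short explicit computation of the kind already carried out for $\kappa$ above, and it finishes part (2).

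For part (1) the same picture applies, now with $R(\theta \times \id)_*$, and this is where the real work lies. Here $(\theta\times\id)^\sharp_*\Mcal$ is $R\theta_*\Mcal$ as a complex of $\Ocal_{Z_2}$-modules, so the target of the top arrow is $\kappa_1(R\theta_*\Mcal)$. I would compute $R(\theta\times\id)_*$ of a matrix factorization via an injective-term representative (Proposition \ref{LocFreeInj}(1)) and reduce, by locality of absolute acyclicity (Lemma \ref{local}), to the case where $Z_2$ is affine. The support hypothesis---$\Mcal$ supported on $Z_3$ with $\theta|_{Z_3}$ proper---guarantees that $R(\theta \times \id)_*$ is defined on $\kappa_2(\Mcal)$ and lands in the target category. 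Flat base change along the Cartesian square above yields $R(\theta\times\id)_* q_1^* \iso q_2^* R\theta_*$, which identifies the underlying terms of $R(\theta\times\id)_*\kappa_2(\Mcal)$ with those of $\kappa_1(R\theta_*\Mcal)$; the differential then matches because $R(\theta\times\id)_*$ carries the $q_1^* d_\Mcal$ part to $q_2^* R\theta_* d_\Mcal$ and fixes the $\Ocal$-linear, $G$-equivariant Koszul term $\delta$.

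The main obstacle is precisely this differential bookkeeping in part (1): unlike in the pull-back case, I cannot invoke the base-change statement for matrix factorizations (Proposition \ref{BaseChange}) as a black box, since $\kappa_2(\Mcal)$ is a pull-back only on underlying sheaves and \emph{not} as a matrix factorization---its potential $h_1$ is manufactured by the Koszul term $\delta$ and genuinely involves the $V^*$-coordinate, so there is no matrix factorization on $Z_1$ whose pull-back it is. I therefore expect to have to verify by hand, after the reduction to the affine situation, that the base-change isomorphism on terms intertwines the twisted differentials $q_i^* d_\Mcal + \delta$, keeping track of $G$-equivariance throughout.
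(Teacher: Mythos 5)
Your proposal is correct and takes essentially the same route as the paper: in both parts the paper identifies the underlying terms by noting that push-forward (after reducing via Lemma \ref{local} to the affine case, where it is exact), respectively pull-back, commutes with $- \otimes \Sym(V)$, and then matches the differentials by a direct computation whose content is exactly your observation that the Koszul term $\delta = \sum_k \xi_k \otimes t_k$ is $\Ocal_{Z_i}$-linear, lives purely in the $V^*$-direction, and is therefore untouched by $(\theta \times \id)_*$ and $(\theta \times \id)^*$. The flat base change $R(\theta \times \id)_* q_1^* \iso q_2^* R\theta_*$ you invoke in part (1) becomes, after the affine localization, precisely the term-by-term identification $(\theta \times \id)_*(\Mcal^{\odd} \otimes \Sym(V)) = (\theta_*\Mcal)^{\odd} \otimes \Sym(V)$ that the paper writes down, and the hand-verification of the twisted differentials you anticipate is the paper's concluding computation.
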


\begin{proof}
1) Since $\theta$ is proper on the support the functor $(\theta \times \id)^\sharp_*$ sends $\Perf$ to $\Perf$ so the composition is well-defined. Proving that $\kappa_1 (\theta \times \id)^\sharp_*(\Mcal) \iso (\theta \times \id)_* \kappa_2(\Mcal)$ can be done locally so we may assume that all schemes are affine in which case the push-forward is exact.
{\allowdisplaybreaks
\begin{align}
\kappa_2 (\theta \times \id)^\sharp_* (\Mcal) &= \left(\vcenter{\small{\xymatrix@R=1.5em{((\theta \times \id)^\sharp_* \Mcal)^{\text{odd}} \otimes \Sym(V^*) \ar@<1em>[d] \\
{((\theta \times \id)^\sharp_* \Mcal)^\even} \otimes \Sym(V^*) \ar@<1em>[u]
}}} \right)\\
& = \left(\vcenter{\small{\xymatrix@R=1.5em{(\theta \times \id)_* (\Mcal^{\text{odd}} \otimes \Sym(V^*)) \ar@<1em>[d] \\
(\theta \times \id)_* (\Mcal^\even \otimes \Sym(V^*)) \ar@<1em>[u]
}}} \right)\\
& \iso (\theta \times \id)_* \kappa_1(\Mcal).
\end{align}}
The functor $(\theta \times \id)^\sharp_*$ does not change the action of $\Lambda(V)$ so
\begin{align}
d_{\kappa_2(\theta \times \id)^\sharp_*(\Mcal)}(m \otimes s)&=dm \otimes s +\sum_{k=1}^n \xi_k m \otimes t_k s\\
&=d_{(\theta \times \id)_* \kappa_1(\Mcal)}(m \otimes s).
\end{align}

2) Pull-back preserve $\Perf$ and $(\theta \times \id)^*$ is exact so we have.
\begin{align}
\kappa_1 (\theta \times \id)^{\sharp *}(\Mcal)&=\left(\vcenter{\small{\xymatrix@R=1.5em{(\Mcal \otimes_{Z_2} \Ocal_{Z_1})^{\text{odd}} \otimes \Sym(V^*) \ar@<1em>[d]^d \\
(\Mcal \otimes_{Z_2} \Ocal_{Z_1})^{\text{even}} \otimes \Sym(V^*) \ar@<1em>[u]^d
}}} \right)\\
&\iso \left(\vcenter{\small{\xymatrix@R=1.5em{\Mcal^\text{odd} \otimes_{Z_2} \Ocal_{Z_1}\otimes \Sym(V^*) \ar@<1em>[d]^d \\
\Mcal^\text{even} \otimes_{Z_2} \Ocal_{Z_1} \otimes \Sym(V^*) \ar@<1em>[u]^d
}}} \right)\\
&\iso (\theta \times \id)^* \kappa_2(\Mcal).
\end{align}
For differentials we have
\begin{align}
d_{\kappa_1 (\theta \times \id)^{\sharp *}(\Mcal)}(m \otimes f \otimes s) &=d_\Mcal(m) \otimes f \otimes s +\sum_{k=1}^n \xi_k m \otimes f \otimes t_k s\\
&=d_{(\theta \times \id)^* \kappa_2(\Mcal)}(m \otimes f \otimes s).
\end{align}
This finishes the proof.
\end{proof}

\begin{lemma}
Let $f : V \hookrightarrow V \oplus W$ be an inclusion of vector spaces. Then the following diagram is commutative
\[ \xymatrix{{D_{\Perf}} (\Ocal_Z \otimes \Lambda(V) \otimes \Lambda(W)-\module^G) \ar[r]^-{(\id \times f)^\sharp_*} \ar[d]^{\kappa_1} & {D_{\Perf}} (\Ocal_Z \otimes \Lambda(V)-\module^G) \ar[d]^{\kappa_2} \\
\Dabs(\QCoh^G(Z \times V \times W),h_1) \ar[r]^-{(\id \times f)^*} & \Dabs(\QCoh^G(Z \times V),h_2) } \]
\end{lemma}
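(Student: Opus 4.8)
The plan is to imitate the two preceding lemmas: both vertical functors $\kappa_1,\kappa_2$ are defined termwise, and both horizontal functors are exact on the representatives that actually occur, so it suffices to produce a natural isomorphism on objects compatible with the differentials and with the potentials. First I would unwind the two horizontal arrows. The inclusion $f:V\hookrightarrow V\oplus W$ induces the algebra inclusion $\Ocal_Z\otimes\Lambda(V)\hookrightarrow\Ocal_Z\otimes\Lambda(V)\otimes\Lambda(W)=\Ocal_Z\otimes\Lambda(V\oplus W)$, and $(\id\times f)^\sharp_*$ is restriction of scalars along it: an $\Ocal_Z\otimes\Lambda(V)\otimes\Lambda(W)$-module $\Mcal$ is sent to the same underlying complex with the $\Lambda(W)$-action forgotten, retaining only the $\Lambda(V)$-action. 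On the matrix factorization side, $(\id\times f)^*$ is pullback along the closed immersion $Z\times V\hookrightarrow Z\times V\times W$, $(z,v)\mapsto(z,v,0)$; on functions this is the surjection $\Sym(V^*\oplus W^*)\twoheadrightarrow\Sym(V^*)$ killing $W^*$. Since $\kappa_1(\Mcal)=\Mcal\otimes\Sym(V^*\oplus W^*)$ has terms that are locally free over $Z\times V\times W$ (because $\Mcal$ is perfect), this pullback is underived, so I may compute with the naive restriction.

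Next I would compare the two composites on an object $\Mcal$. On underlying $\Z/2$-graded modules both routes give $\Mcal\otimes\Sym(V^*)$: the left-then-down route produces $\kappa_2$ of the restricted module, namely $(\Mcal^{\odd}\otimes\Sym(V^*),\Mcal^{\even}\otimes\Sym(V^*))$, while the top-then-right route tensors $\Mcal\otimes\Sym(V^*\oplus W^*)$ over $\Sym(V^*\oplus W^*)$ with $\Sym(V^*)$, again yielding $\Mcal\otimes\Sym(V^*)$. The heart of the argument is the agreement of the two differentials. Fixing a basis $(\xi_k)$ of the exterior generators of $\Lambda(V\oplus W)$ adapted to the decomposition, with dual basis $(t_k)$ in $V^*\oplus W^*$, the Koszul differential of $\kappa_1(\Mcal)$ is $d_\Mcal m\otimes s+\sum_k \xi_k m\otimes t_k s$, the sum running over the whole basis of $V\oplus W$. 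Applying $(\id\times f)^*$ sends every $t_k$ lying in $W^*$ to zero, so only the summands with $\xi_k\in V$ survive, leaving $d_\Mcal m\otimes s+\sum_{\xi_k\in V}\xi_k m\otimes t_k s$. This is exactly the differential of $\kappa_2$ applied to $\Mcal$ with its $\Lambda(W)$-action forgotten, since restriction of scalars removes precisely the $\Lambda(W)$-generators that indexed the discarded summands.

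Finally I would check compatibility of the potentials, i.e. $(\id\times f)^*h_1=h_2$. Because $h_1$ is linear in the $V^*\oplus W^*$-variable (its symbol is $\sum_k\rho^\sharp(\xi_k)\otimes t_k$ over the basis of $V\oplus W$), restricting to $W=0$ deletes the $W$-contribution and returns $h_2$; this also confirms directly that the displayed differentials square to the correct potential on each side. The passage to the homotopy and derived categories is harmless, since both horizontal functors are exact on the objects at hand and send perfect, respectively flat, objects to perfect, respectively flat, objects, and $\kappa_1,\kappa_2$ already descend by the earlier lemmas. The only genuinely delicate point is the bookkeeping that matches \emph{forgetting the $\Lambda(W)$-action} on the algebra side with \emph{setting the $W^*$-coordinates to zero} on the matrix factorization side; once the adapted basis is chosen this identification is immediate, and what remains are the same routine termwise verifications carried out in the previous two lemmas.
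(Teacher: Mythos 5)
Your proposal follows the paper's own proof essentially step for step: identify $(\id \times f)^\sharp_*$ as restriction of scalars, note that the pullback $(\id\times f)^*$ is underived because the terms of $\kappa_1(\Mcal)$ are free over $\Sym(V^*)\otimes\Sym(W^*)$, match both composites on objects as $\Mcal\otimes\Sym(V^*)$, and then check the differentials in a basis adapted to $V\oplus W$, where the $W$-summands of the Koszul differential die because $f^\sharp$ kills the dual basis vectors $t^W_l$ of $W^*$ --- exactly the computation in the paper. The only differences are cosmetic: you additionally verify the compatibility of potentials $(\id\times f)^*h_1=h_2$ (left implicit in the paper), while the paper spells out why $(\id\times f)^\sharp_*$ preserves $\Perf$ (finite-dimensionality of $\Lambda(W)$), a one-line point you assert without proof.
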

\begin{proof}
Since $f$ is injective and $\Lambda(W)$ is a finite complex with a finite dimensional vector space in each degree the functor $(\id \times f)^\sharp_*$ preserves $\Perf$. On the level of components we have
{\allowdisplaybreaks
\begin{align}
L(\id \times f)^* \kappa_1(\Mcal)&=\left(\vcenter{\small{\xymatrix@R=1.5em{\Mcal^\odd \otimes \Sym(V^*) \otimes \Sym(W^*) \otimes^L_{\Sym(W^*) \otimes \Sym(V^*)} \Sym(V^*) \ar@<1em>[d] \\
\Mcal^\even \otimes \Sym(V^*) \otimes \Sym(W^*) \otimes^L_{\Sym(W^*) \otimes \Sym(V^*)} \Sym(V^*)  \ar@<1em>[u]
}}} \right)\\
&\iso \left(\vcenter{\small{\xymatrix@R=1.5em{\Mcal^\odd \otimes \Sym(V^*) \ar@<1em>[d] \\
\Mcal^\even \otimes \Sym(V^*) \ar@<1em>[u]
}}} \right)\\
&=\kappa_2(\id \times f)^\sharp_*(\Mcal).
\end{align}}
Let $(\xi_k^V,t_k^V)_{k=1}^n$ be a pair of a basis for $V$ and its dual basis and $(\xi_l^W, t^W_l)_{l=1}^m$ a pair of a basis for $W$ and its dual basis. Then we have
{\allowdisplaybreaks
\begin{align}
&d_{L(\id \times f)^* \kappa_1(\Mcal)}(m \otimes 1 \otimes 1 \otimes s)\\
&=d_\Mcal m \otimes 1 \otimes 1 \otimes s + \sum_{k=1}^n \xi^V_k m \otimes 1 \otimes 1 \otimes t^V_k s + \sum_{l=1}^m \xi^W_l m \otimes 1 \otimes t^W_l \otimes s\\
&=d_\Mcal m \otimes 1 \otimes 1 \otimes s + \sum_{k=1}^n \xi^V_k m \otimes 1 \otimes 1 \otimes t^V_k s + \sum_{l=1}^m \xi^W_l m \otimes 1 \otimes1 \otimes  f^\sharp(t^W_l) s\\
&=d_\Mcal m \otimes 1 \otimes 1 \otimes s + \sum_{k=1}^n \xi^V_k m \otimes 1 \otimes 1 \otimes t^V_k s\\
&\iso d_\Mcal m \otimes s + \sum_{k=1}^n \xi_k^V m \otimes t^V_k s\\
&=d_{\kappa_2 (\id \times f)^\sharp_*(\Mcal)}(m \otimes s).
\end{align}}
This finishes the proof.
\end{proof}

\begin{prop}
The functor $\kappa$ is monoidal.
\end{prop}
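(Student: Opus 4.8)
The plan is to produce a natural isomorphism $\kappa(\Mcal * \Ncal) \iso \kappa(\Mcal) * \kappa(\Ncal)$ intertwining the convolution on $D_{\Perf, Y \times_V Y}(\Ocal_{Y \times Y} \otimes \Lambda(V^*)\text{-}\module^G)$ (transported from the Bezrukavnikov--Riche convolution on $D(\Coh^G(Y \times^R_V Y))$) with the support-conditioned convolution on the target, and then to verify the unit and associativity constraints. The strategy is to unwind both convolutions into the same string of elementary operations --- flat pull-backs, an external product followed by the collapse of a pair of auxiliary $V^*$-factors, and a push-forward that is proper on supports --- and to commute $\kappa$ past each operation using the lemma on external products, the two parts of the preceding compatibility lemma, and the vector-space inclusion lemma.

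First I would unwind the target convolution. By definition
\[ \kappa(\Mcal) * \kappa(\Ncal) = Rp_{13*}\bigl(p_{12}^* \kappa(\Mcal) \otimes^L_{Y \times Y \times Y \times V^*} p_{23}^* \kappa(\Ncal)\bigr), \]
with $p_{12},p_{23},p_{13} : Y \times Y \times Y \times V^* \to Y \times Y \times V^*$ the projections keeping the single $V^*$-factor fixed. Since these projections are flat, part (2) of the compatibility lemma gives $p_{12}^* \kappa(\Mcal) \iso \kappa(p_{12}^{\sharp *}\Mcal)$ and $p_{23}^* \kappa(\Ncal) \iso \kappa(p_{23}^{\sharp *}\Ncal)$, where the $\Lambda(V^*)$-module $p_{12}^{\sharp *}\Mcal$ (resp. $p_{23}^{\sharp *}\Ncal$) lives on $Y \times Y \times Y$ and resolves the incidence $\nu(y_1) = \nu(y_2)$ (resp. $\nu(y_2) = \nu(y_3)$).

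The heart of the argument is identifying the tensor product $\kappa(p_{12}^{\sharp *}\Mcal) \otimes^L \kappa(p_{23}^{\sharp *}\Ncal)$ with $\kappa$ of a single $\Lambda(V^*)$-module. I would factor this tensor product as an external product followed by two diagonal restrictions. The lemma on external products gives $\kappa(\Acal) \boxtimes \kappa(\Bcal) \iso \kappa(\Acal \boxtimes \Bcal)$, which carries two independent copies of $\Sym(V)$ and two independent potentials. Restricting along the diagonal $V^* \hookrightarrow V^* \oplus V^*$ collapses the two copies of $\Sym(V)$ into one and --- because $h \circ p_{12} + h \circ p_{23} = h \circ p_{13}$ --- adds the two potentials into the single potential $h \circ p_{13}$; by the vector-space inclusion lemma this is exactly intertwined with the corresponding restriction of scalars $\Lambda(V^*) \otimes \Lambda(V^*) \to \Lambda(V^*)$ along the diagonal on the $\kappa$-side. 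The remaining restriction, along the diagonal $Y \times Y \times Y \hookrightarrow (Y\times Y \times Y)\times(Y \times Y \times Y)$ in the middle $Y$-factor, is not flat and hence is not covered by the lemmas; here I would invoke locality of absolute acyclicity to reduce to affine $Y$ and verify the isomorphism by the same explicit check on terms and differentials used to prove the external-product lemma, the point being that $\Sym(V) \otimes_{\Sym(V)} \Sym(V) = \Sym(V)$ and that the two $\xi_k$-actions add up to the diagonal $\Lambda(V^*)$-action. This yields
\[ \kappa(p_{12}^{\sharp *}\Mcal) \otimes^L \kappa(p_{23}^{\sharp *}\Ncal) \iso \kappa\bigl(p_{12}^{\sharp *}\Mcal \otimes^L_{\Ocal_{Y \times Y \times Y}} p_{23}^{\sharp *}\Ncal\bigr), \]
where the right-hand module carries the diagonal $\Lambda(V^*)$-action and is precisely the object computing the source convolution before its final push-forward.

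Finally, the restriction to $D_{\Perf, Y \times_V Y}$ guarantees that all supports lie over $Y \times_V Y$, so $p_{13}$ is proper on supports and part (1) of the compatibility lemma gives $Rp_{13*}\kappa(-) \iso \kappa(R\pi_{13*}(-))$. Composing the displayed isomorphisms produces $\kappa(\Mcal) * \kappa(\Ncal) \iso \kappa(\Mcal * \Ncal)$, and naturality is immediate since every step is natural. To conclude I would check that $\kappa$ carries the monoidal unit --- the Koszul resolution of the diagonal $\Ocal_{\Delta(Y)}$ --- to the unit of the target convolution, which is a finite computation, and that the multiplicativity isomorphism is compatible with the associativity constraints, which follows from the naturality of all the intermediate isomorphisms together with the associativity established in Proposition \ref{assoc}. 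The main obstacle is the tensor/diagonal step above: reconciling the single shared $V^*$-factor on the matrix-factorization side (one copy of $\Sym(V)$, potentials adding) with the two incidence conditions governing the triple fibre product on the coherent side --- exactly what the vector-space inclusion lemma is built to handle --- together with the non-flat gluing along the middle $Y$-factor, which forces the reduction to the affine computational setting.
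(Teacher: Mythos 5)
Your proposal is correct and follows essentially the same route as the paper's proof: both decompose the convolution into flat pull-backs, an external product, the collapse of the doubled $V^*$-factor via the vector-space inclusion lemma, a diagonal restriction in the $Y$-factors, and a push-forward that is proper on supports thanks to the $Y \times_V Y$ support condition, commuting $\kappa$ past each step with the same three lemmas (you merely run the chain from $\kappa(\Mcal)*\kappa(\Ncal)$ toward $\kappa(\Mcal*\Ncal)$ rather than the reverse). Your explicit acknowledgment that the non-flat diagonal along $Y$ is not covered by the flat pull-back lemma, and must be checked locally on terms and differentials, is in fact slightly more careful than the paper, which absorbs that step into its citation of ``the above lemmas.''
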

\begin{proof}
Consider the derived projections
\[ \xymatrix{& Y\times_V^R Y \times_V^R Y \ar[dr]^{\bar{p}_{23}} \ar[d]^{\bar{p}_{13}} \ar[dl]_{\bar{p}_{12}} & \\ Y \times_V^R Y & Y \times_V^R Y & Y \times_V^R Y} \]
Recall that in \cite{BR} the convolution product for $\Mcal,\Ncal \in D_{\Perf, Y \times_V Y}(\Ocal_{Y \times Y} \otimes \Lambda(V^*)-\module^G)$ is defined as.
\[ \Mcal*\Ncal:=R \bar{p}_{13*}(L \bar{p}_{12}^* \otimes_{Y\times_V^R Y \times_V^R Y} L \bar{p}_{23}^*) \]

On the level of DG-schemes this translates into the following picture
\[ \xymatrix{ & \Ocal_{Y \times Y \times Y} \otimes \Lambda(V^*) \otimes \Lambda(V^*) & \\
\Ocal_{Y \times Y \times Y} \otimes \Lambda(V^*) \ar[ur]^{q_{12}} & \Ocal_{Y \times Y \times Y} \otimes \Lambda(V^*) \ar[u]^{q_{13}} & \Ocal_{Y \times Y \times Y} \otimes \Lambda(V^*) \ar[ul]_{q_{23}}\\
\Ocal_{Y \times Y} \otimes \Lambda(V^*) \ar[u]^{p_{12}^*} & \Ocal_{Y \times Y} \otimes \Lambda(V^*) \ar[u]^{p_{13}^*} & \Ocal_{Y \times Y} \otimes \Lambda(V^*) \ar[u]^{p_{23}^*}
} \]
Explicitly, the maps are given by
\begin{gather}
q_{12}(f \otimes v)=f \otimes v \otimes 1, \qquad q_{23}(f \otimes v)=f \otimes 1 \otimes v,\\
q_{13}(f \otimes v)=f \otimes v \otimes 1 + f \otimes 1 \otimes v.
\end{gather}
The map $p_{ij}^*$ is the pull-back corresponding to the projection to the $(i,j)$'th factor of $Y$ and identity on $V^*$. The formula becomes
\begin{align}
\Mcal * \Ncal&=R p_{13*} R q_{13*}(q_{12}^* p_{12}^* \Mcal \otimes^L_{\Ocal_{Y \times Y \times Y} \otimes \Lambda(V^*) \otimes \Lambda(V^*)} q_{23}^* p_{23}^* \Ncal)\\
&\iso R p_{13*} R q_{13*}((\Lambda(V^*) \otimes p_{12}^* \Mcal) \otimes^L_{\Ocal_{Y \times Y \times Y} \otimes \Lambda(V^*) \otimes \Lambda(V^*)} (p_{23}^* \Ncal \otimes \Lambda(V^*)))\\
&\iso R p_{13*} R q_{13*}(p_{12}^* \Mcal \otimes^L_{\Ocal_{Y \times Y \times Y}} p_{23}^* \Ncal)\\
&\iso R p_{13*} R q_{13*} L \Delta_Y^*(p_{12}^* \Mcal \boxtimes p_{23}^* \Ncal),
\end{align}
where $\Delta_Y$ is the diagonal embedding.

The corresponding diagram on the Koszul dual side is the following
\[ \xymatrix{ &Y \times Y \times Y \times V^* \times V^* & \\
Y \times Y \times Y \times V^* \ar@{^{(}->}[ur]^{\psi_{12}} \ar[d]^{\pi_{12}} & Y \times Y \times Y \times V^* \ar@{^{(}->}[u]^{\psi_{13}} \ar[d]^{\pi_{13}}  & Y \times Y \times Y \times V^* \ar@{_{(}->}[ul]_{\psi_{23}} \ar[d]^{\pi_{23}}\\
Y \times Y \times V^* & Y \times Y \times V^* & Y \times Y \times V^*
} \]
The morphisms in the Koszul dual picture are
\begin{gather}
\psi_{12}(y_1,y_2,y_3,v)=(y_1,y_2,y_3,v,0), \qquad \psi_{23}(y_1,y_2,y_3,v)=(y_1,y_2,y_3,0,v)\\
\psi_{13}(y_1,y_2,y_3,v)=(y_1,y_2,y_3,v,v).
\end{gather}
The $\pi_{ij}$ is projection to the $(i,j)$'th factor times identity. Using the above lemmas we get
\begin{align}
\kappa(\Mcal * \Ncal)&\iso R\pi_{13*} \kappa(R q_{13*} L \Delta_Y^*(p_{12}^* \Mcal \boxtimes p_{23}^* \Ncal))\\
&\iso R\pi_{13*} L\psi_{13}^* \kappa(L \Delta_Y^*(p_{12}^* \Mcal \boxtimes p_{23}^* \Ncal))\\
&\iso R\pi_{13*} L\psi_{13}^* L \Delta_Y^*\kappa(p_{12}^* \Mcal \boxtimes p_{23}^* \Ncal)\\
&\iso R\pi_{13*} L(\Delta_Y \circ \psi_{13})^*(\pi_{12}^* \kappa(\Mcal) \boxtimes \pi_{23}^* \kappa(\Ncal))\\
\end{align}
Notice that $\Delta_Y \circ \psi_{13} : Y \times Y \times Y \times V^* \to Y \times Y \times Y \times Y \times Y \times Y \times V^* \times V^*$ is the diagonal embedding $\Delta$ so
\begin{align}
\kappa(\Mcal * \Ncal)&\iso R\pi_{13*} L\Delta^*(\pi_{12}^* \kappa(\Mcal) \boxtimes \pi_{23}^* \kappa(\Ncal))\\
&\iso R\pi_{13*} (\pi_{12}^* \kappa(\Mcal) \otimes^L_{Y \times Y \times Y \times V^*} \pi_{23}^* \kappa(\Ncal))\\
&=\kappa(\Mcal) * \kappa(\Ncal).
\end{align}
This finishes the proof.
\end{proof}

\begin{prop}
The functor $\kappa$ takes the unit to the unit.
\end{prop}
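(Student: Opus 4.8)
The plan is to identify the monoidal unit on each side and check that $\kappa$ matches them; together with the previous proposition this completes the statement that $\kappa$ is monoidal. On the Bezrukavnikov--Riche side the unit of $(D_{\Perf, Y\times_V Y}(\Ocal_{Y\times Y}\otimes\Lambda(V^*)-\module^G), *)$ is the structure sheaf $\Ocal_{\Delta(Y)}$ of the diagonal $\Delta : Y\hookrightarrow Y\times_V^R Y$, exactly as for any convolution category (this is the object corresponding to $e\in W$ in \cite{BR}). So it suffices to compute $\kappa(\Ocal_{\Delta(Y)})$ and to verify that the result is a two--sided unit both for the convolution on $\Dabs_{Y\times_V Y\times V^*}(\Coh^G(Y\times Y\times V^*), h)$ and for its action on $\Dabs(\Coh^G(Y\times X), w)$ (the latter is what is actually needed so that the identity element of $B_\af$ acts trivially).

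First I would compute $\kappa(\Ocal_{\Delta(Y)})$ directly from the definition. The key point is that $\rho^\sharp(V^*)$ lies in the ideal of the diagonal, since $\rho(y,y)=\nu(y)-\nu(y)=0$; hence $\Lambda^{\geq 1}(V^*)$ acts by zero on $\Ocal_{\Delta(Y)}$ and the twisting term $\sum_k \xi_k\otimes t_k$ in the differential collapses. As $\Ocal_{\Delta(Y)}$ sits in a single (even) degree with zero internal differential, the formula for $\kappa$ gives
\[ \kappa(\Ocal_{\Delta(Y)}) \iso (0,\ \Ocal_{\Delta(Y)}\otimes\Sym(V),\ 0,\ 0)=\overline{\Ocal_{\Delta(Y)\times V^*}}, \]
the structure sheaf of $\Delta(Y)\times V^*\subset Y\times Y\times V^*$ as a one--term matrix factorization. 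This is legitimate because $h$ vanishes identically on $\Delta(Y)\times V^*$, and its support lies in $Y\times_V Y\times V^*$ as required. Equivalently, I would realize $\Ocal_{\Delta(Y)}=(\Delta\times\id)^\sharp_*\Ocal_Y$ and apply the push--forward compatibility lemma to $\Delta\times\id_{V^*}$, which reduces the computation to $\kappa$ on $Y$ (where $\rho$, and hence the twisting term, vanishes) and yields the same answer.

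Next I would verify the unit property by a base--change computation of the same shape as Proposition \ref{assoc}. For $\Ncal\in\Dabs(\Coh^G(Y\times X), w)$ the action reads
\[ \overline{\Ocal_{\Delta(Y)\times V^*}} * \Ncal = Rp_{13*}\bigl(Lp^*\overline{\Ocal_{\Delta(Y)\times V^*}}\otimes^L_{Y\times Y\times X} p_{23}^*\Ncal\bigr). \]
Because the condition $y_1=y_2$ is transverse to the factor on which $p=\id\times\id\times\mu$ acts, one gets $Lp^*\overline{\Ocal_{\Delta(Y)\times V^*}}\iso\overline{\Ocal_{\Delta(Y)\times X}}$, and tensoring with $\overline{\Ocal_{\Delta(Y)\times X}}$ is restriction along the closed embedding $i:\Delta(Y)\times X\hookrightarrow Y\times Y\times X$. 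Since $p_{13}\circ i$ and $p_{23}\circ i$ are both the identity of $Y\times X$, the projection formula together with $R(p_{13}\circ i)_*=\id$ gives $\overline{\Ocal_{\Delta(Y)\times V^*}} * \Ncal\iso\Ncal$. The same argument on $Y\times Y\times Y\times V^*$, using the projections of the monoidal convolution, shows $\overline{\Ocal_{\Delta(Y)\times V^*}}$ is a two--sided unit for $*$ on $\Dabs_{Y\times_V Y\times V^*}(\Coh^G(Y\times Y\times V^*), h)$.

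The main obstacle is not the convolution bookkeeping, which is routine once the diagonal matrix factorization is in hand, but the justification of the value $\kappa(\Ocal_{\Delta(Y)})$: the diagonal module is not literally a finite complex of locally free $\Ocal_{Y\times Y}\otimes\Lambda(V^*)$-modules, since the trivial module over an exterior algebra has infinite projective dimension. I would handle this either by checking that the tensor formula $\Ocal_{\Delta(Y)}\otimes_{\Ocal_{Y\times Y}\otimes\Lambda(V^*)}K$ defining $\kappa$ already outputs the coherent matrix factorization displayed above, or by the push--forward route $\Ocal_{\Delta(Y)}=(\Delta\times\id)^\sharp_*\Ocal_Y$, so that $\kappa$ need only be evaluated on the structure sheaf of $Y$, where the twisting term disappears and no infinite resolution intervenes.
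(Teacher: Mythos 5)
Your proposal is correct and follows essentially the same route as the paper: identify the unit as $\Ocal_{\Delta Y}$, compute $\kappa(\Ocal_{\Delta Y}) \iso (0,\ \Ocal_{\Delta Y}\otimes\Sym(V),\ 0,\ 0)$, i.e.\ the one-term matrix factorization $\Ocal_{\Delta(Y)\times V^*}$, and then verify the unit property by realizing this object as a push-forward along a closed embedding and applying flat base change together with the projection formula. The differences are minor: the paper runs the convolution check inside the monoidal category $\Dabs(\Coh^G(Y\times Y\times V^*),h)$ itself (your main displayed computation is for the action on $\Dabs(\Coh^G(Y\times X),w)$, with the monoidal case deferred to "the same argument"), and the paper does not address the $\Perf$-membership subtlety for $\Ocal_{\Delta Y}$ that you correctly flag and resolve.
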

\begin{proof}
The unit in $\Kcal_{Y,V}$ is the structure sheaf of the diagonal $\Ocal_{\Delta Y}$ sitting in degree 0.
\[ \kappa(\Ocal_{\Delta Y})=\left(\vcenter{\small{\xymatrix@=1em{0 \ar@<0.5em>[d] \\ \Ocal_{\Delta Y} \otimes \Sym(V) \ar@<0.5em>[u]}}}\right) \]
This is a push-forward along the inclusion 
\[i : Y \times_Y Y \times V^* \hookrightarrow Y \times Y \times V^*.\] 
Let $\Mcal \in \Dabs(\Coh^G(Y \times Y \times V^*),h)$. We need to check that $\kappa(\Ocal_{\Delta Y})*\Mcal \iso \Mcal \iso \Mcal * \kappa(\Ocal_{\Delta Y})$. Consider the following commutative diagram
\[ \xymatrix{ Y \times Y \times V^* \ar[d]^-{\id} \ar@{-}[r]^-\sim & Y \times_Y Y \times Y \times V^* \ar[d]^-{i_4} \ar[r]^-{p_1} & Y \times_Y Y \times V^* \iso Y \times V^*\ar[d]^i \\
Y \times Y \times V^* & Y \times Y \times Y \times V^* \ar[l]^-{p_{13}}_-{p_{23}} \ar[r]_-{p_{12}} & Y \times Y \times V^*} \]
Using flat base change and the projection formula we get
{\allowdisplaybreaks
\begin{align}
\kappa(\Ocal_{\Delta Y})*\Mcal &=R p_{13*}(p_{12}^* Ri_*\kappa(\Ocal_{\Delta Y}) \otimes^L_{Y \times Y \times Y \times V^*} p_{23}^*\Mcal)\\
&\iso R p_{13*}(Ri_{4*} p_1^*\kappa(\Ocal_{\Delta Y}) \otimes^L_{Y \times Y \times Y \times V^*} p_{23}^*\Mcal)\\
&\iso Rp_{13*} R i_{4*}(p_1^* \kappa(\Ocal_{\Delta Y}) \otimes^L_{Y \times_Y Y \times Y \times V^*} Li_4^* p_{23}^*\Mcal)\\
&\iso \id_* (p_1^* \kappa(\Ocal_{Y}) \otimes_{Y \times Y \times V* }^L \id^* \Mcal)\\
&\iso \left(\left(\vcenter{\small{\xymatrix@=1em{0 \ar@<0.5em>[d] \\ \Ocal_{Y \times Y \times V^*} \ar@<0.5em>[u]}}}\right) \otimes_{Y \times V^*} \left(\vcenter{\small{\xymatrix@=1em{0 \ar@<0.5em>[d] \\ \Ocal_{Y} \otimes \Sym(V) \ar@<0.5em>[u]}}}\right)\right) \otimes_{Y \times Y \times V^*}^L \Mcal\\
& \iso \left(\vcenter{\small{\xymatrix@=1em{0 \ar@<0.5em>[d] \\ \Ocal_{Y \times Y \times V^*} \ar@<0.5em>[u]}}}\right) \otimes^L_{Y \times Y \times V^*} \Mcal\\
&\iso \Mcal.
\end{align}}
Similarly, we get $\Mcal * \kappa(\Ocal_{\Delta Y}) \iso \Mcal$.
\end{proof}

We can now finish the proof of the main theorem.

\begin{proof}[Proof of main theorem \ref{MainThm}]
From theorem \ref{DGBraidAct} we have explicit generators of a braid group action in $\Kcal_{\gtil,\g}^{\Coh}$. These are sheaves on closed subschemes sitting in one degree so they lie in the full subcategory $D_{\Perf} (\Ocal_{\gtil \times \gtil} \otimes \Lambda(\g^*)-\module^G)$. Since all of them are supported on $\gtil \times_\g \gtil$ the images under the monoidal functor $\kappa$ land in the full subcategory $\Dabs_{\gtil \times_\g \gtil \times \g^*}(\Coh^G(\gtil \times \gtil \times \g^*),h)$. Thus, they act on $\Dabs(\Coh^G(\gtil \times T^* X),w)$ and generate the desired geometric braid group action.
\end{proof}

\end{document}